\documentclass{amsart}

\title[Hubbard trees]{Combinatorial Hubbard trees for postcritically infinite unicritical polynomials and exponential maps}

\author{Malte Ha{\ss}ler}
\address{Cornell University, Department of Mathematics, Malott Hall, 212 Garden Avenue, 14853 Ithaca, USA}
\email{mh2479@cornell.edu}
\author{Dierk Schleicher}
\address{Aix-Marseille Universit\'e and CNRS, UMR 7373, Institut de Math\'ematiques de Marseille, 163 Avenue de Luminy Case 901, 13009 Marseille, France. 
}
\email{dierk.schleicher@univ-amu.fr}

\usepackage{amsthm}
\usepackage{graphicx}
\usepackage{enumitem}
\usepackage{float}

\usepackage{amsmath, amsthm, amsfonts,amssymb, comment}

\theoremstyle{plain}
\newtheorem{theorem}{Theorem}[section]
\newtheorem{lemma}[theorem]{Lemma}
\newtheorem{proposition}[theorem]{Proposition}
\newtheorem{corollary}[theorem]{Corollary}

\theoremstyle{definition}
\newtheorem{definition}[theorem]{Definition}

\newtheorem{example}{Example}
\newtheorem*{remark}{Remark}

\newcounter{reminder}

\newcommand{\Hide}[1]{}
\newcommand{\hide}[1]{}

\renewcommand{\theta}{\vartheta}
\newcommand{\0}{\mathtt{0}}
\newcommand{\1}{\mathtt{1}}
\newcommand{\2}{\mathtt{2}}

\newcommand{\vp}{\varphi}
\newcommand{\C}{\mathbb C}
\newcommand{\Z}{\mathbb Z}
\newcommand{\R}{\mathbb R}

\newcommand{\N}{{\mathbb N}}
\newcommand{\Hub}{\mathcal{H}}

\newcommand{\sm}{\setminus}

\newcommand{\ovl}[1]{\overline{#1}}

\newcommand{\diff}{\operatorname{diff}}

\newcommand{\ItSpec}{\1A_d^\infty}
\renewcommand{\phi}{\varphi}
\renewcommand{\varsigma}{\nu}

\newcommand{\Newpage}{}

\begin{document}

\begin{abstract}
We construct combinatorial Hubbard trees for all unicritical polynomials, and for all exponential maps, for which the critical (singular) value does not escape. More precisely, out of an external angle, or more generally a kneading sequence, we construct a forward invariant tree for which the critical orbit has the given kneading sequence. 

When the critical orbit of a unicritical polynomial is periodic or preperiodic, then the existence of a Hubbard tree is classical. Our trees exist even when the critical orbit is infinite (in many cases, this yields infinite trees), and even when the polynomial Julia set fails to be path connected. In the latter case, our trees cannot be reconstructed from the Julia set in complex dynamics. Our trees also exist for kneading sequences that do not arise in complex dynamics (for non complex-admissible kneading sequences). 

\end{abstract}

\maketitle

\section{Introduction}

Hubbard trees are an important concept to describe and distinguish postcritically finite polynomials. They are usually constructed as invariant subsets that connect all critical orbits within the filled-in Julia set  (subject to a regularity condition within bounded Fatou components), and are minimal with this property (that is, they are spanned by all postcritical points). 

For postcritically infinite polynomials, the filled-in Julia set is sometimes not path connected, and even if it is, to prove path connectivity is a highly non-trivial issue. We propose a combinatorial approach to construct a tree as a topological dynamical system for every unicritical polynomial (with connected Julia set), without any restriction on the dynamics, and without any knowledge on the topology of the Julia set. The resulting invariant tree satisfies all conditions of a Hubbard tree (suitably generalized to the postcritically infinite case), so if the Julia set admits such a tree, then it is topologically conjugate to the one we construct; and if the Julia set does not admit such a tree, then we view this as a problem of the Julia set that is completely irrelevant to our combinatorial construction. 

Our construction works for unicritical polynomials of all degrees, and even for exponential maps. It is known that even exponential maps with finite orbits of the singular value never have invariant Hubbard trees within the complex plane --- but once again this is a shortcoming of the complex dynamical plane that is irrelevant to our combinatorial construction. 

The existence of Hubbard trees is fundamental in the classification or combinatorial distinction of postcritically polynomials, and the same relevance should apply in the postcritically infinite case and for exponentials. Also, various dynamical invariants, such as core entropy, can easily be defined through Hubbard trees as soon as these exist. For instance, one consequence is that core entropy for unicritical polynomials of all degrees, and for all exponential maps, is bounded above by $\log 2$ \cite{MalteTransEntropy}. 

For every complex unicritical polynomials there is an external angle associated to the critical value (possibly finitely many angles), and in general the dynamics is uniquely determined in terms of only the degree and the external angle (subject to some subtleties such as local connectivity or rigidity of the parameter spaces that again is of no importance for us here). In our approach, every external angle has a unique associated kneading sequence (defined below) that completely describes the combinatorics of the polynomial. All our constructions are based solely on the kneading sequence, which is an infinite sequence over an alphabet with cardinality the degree of the polynomial, resp.\ infinity for exponentials. An interesting by-product is that our construction works for all kneading sequences, while not all kneading sequences arise in complex dynamics: that is, not all kneading sequences are \emph{complex admissible}. 

Our construction is, of course, related to much previous work in complex dynamics. Douady and Hubbard~\cite{Orsay} have constructed Hubbard trees for postcritically finite polynomials, and Poirier~\cite{Poirier} has used them to classify all of these in a dynamical way. Bruin, Kaffl, and Schleicher~\cite{BKS} have shown existence and uniqueness of abstract Hubbard trees for all periodic kneading sequences of degree $2$, whether or not they are complex admissible. Our work extends all these to the postcritically infinite case. Interestingly, our construction is much more straightforward than that in \cite{BKS} that treats only the (supposedly simple) periodic case of degree $2$ (which for us requires some more effort, due to the need of ``Fatou intervals''). Penrose~\cite{PenroseThesis} has constructed combinatorial models of dendrite Julia sets for all quadratic kneading sequences, complex admissible or not, but excludes the periodic case. From his model Julia set, we can in principle extract the Hubbard tree (as the closure of the connected hull of the critical orbit, but it is a separate step and had a separate goal). 

We should also mention Thurston's theory of invariant quadratic laminations \cite{ThurstonLami} that model Julia sets of quadratic polynomials, postcritically finite or not, based on a single combinatorial input, the external angle. Here, like for Penrose, we can extract the Hubbard tree in a separate combinatorial step. This construction works by design only for complex admissible kneading sequences.

Previous work on exponential Hubbard trees is limited. Pfrang, Rothgang, and Schleicher \cite{HomotopyTrees} have shown that even postsingularly finite exponentials never have invariant Hubbard trees, and have shown that the right concept from a combinatorial point of view are ``Homotopy Hubbard trees''. Our combinatorial construction does not suffer from the limitations of trees, embedded in $\C$, that arise for complex exponentials (or for certain non-locally connected Julia sets of polynomials).

Our fundamental approach is as follows: based on a kneading sequence $\nu$ (an infinite sequence on $\Z/d\Z$ for finite degrees $d\ge 2$, or on $\Z$ for degree $d=\infty$), we discuss unicritical maps with kneading sequence $\nu$: these are polynomials (or the exponential) for which there is a unique critical point such that the critical orbit has itinerary $\nu$ with respect to the partition of the (to-be-defined) Hubbard tree induced by the critical point. A \emph{precritical point of depth $n$} is a point on the Hubbard tree that, after $n$ iterations, maps to the critical value. We construct recursively all precritical points on an arc between the critical point on its image the critical value, based just on the kneading sequence $\nu$. The closure of this countable set is naturally homeomorphic to an interval, the \emph{critical path} in the Hubbard tree between critical point and critical value (for periodic kneading sequences, we have to insert certain ``Fatou intervals''). The Hubbard tree is then the union of the forward iterates of the critical path.

Our approach is completely symbolic. Every point on the Hubbard tree (and on the Julia set) can be described by an infinite sequence over a formal alphabet and the complex polynomial or exponential map becomes the left shift on sequences. 

Of course, all relevant propertes have to be shown: that the critical path is homeomorphic to an interval, that the forward union is uniquely path connected (a tree, possibly with infinitely many endpoints), that the dynamics given by the shift map is continuous with respect to an appropriate topology, and that the resulting tree has the expected properties. 

We should mention that exponential maps in $\C$, unlike polynomials, have no critical point or critical value, but an asymptotic value, known as the singular value, as the only point in $\C$ that has no neighborhood over which all branches of the inverse are locally biholomorphic. In our trees, even for $d=\infty$, there does exist a unique critical point (possibly with infinitely many branches at this point), so we can speak of critical point and critical value like for unicritical polynomials.

In Section~\ref{Sec:path} we construct the critical path from our symbolic set-up. In Section~\ref{Sec:tree} we define the Hubbard tree and investigate its dynamical structure. Finally, in Section~\ref{Sec:examples}, we give examples of Hubbard trees and conditions on their finiteness and recurrence. 

\Newpage

\section{Kneading sequences and the critical path}
\label{Sec:path}

We describe unicritical polynomials of degrees $d\ge 2$, as well as the exponential map (with $d=\infty$) in terms of symbolic dynamics. For every degree $d=2,3,\dots,\infty$, we have an alphabet $A_d=\{\0,\1,\2,\dots,\mathtt{d-1}\}$ resp.\ $A_\infty=\Z$ with $|A_d|=d$. To stress the analogy, especially as $d\to\infty$, we could also write $A_d=\{-\lfloor (d-1)/2\rfloor, \dots, -1,0,1,\lfloor d/2\rfloor\}$, but all that matters is a simple system of representatives. (The sets $A_d=\Z/d\Z$ come with a cyclic order, and $\Z$ comes with a linear order. This has an effect on embeddings of the tree into the plane, but not on our constructions which are compatible with arbitrary permutations of $A_d$ or $A_\infty$.)

Moreover we have the special symbol $\star$ to describe the critical point, and write $A_d^\star=A_d \cup \{\star \}$. The set of infinite sequences over the alphabet $A_d$ is described by $A_d^\infty$.

\subsection{Abstract kneading sequences}

\begin{definition}[Abstract kneading sequence]
An \emph{abstract kneading sequence} is one of the following:
\begin{itemize}
\item 
a non-periodic infinite sequence over the alphabet $A_d$, 
\item 
or an infinite periodic sequence over the alphabet $A_d^\star$ where the $\star$ symbol occurs exactly once within the period, at the last position. 
\end{itemize}
The latter kind we call a \emph{$\star$-periodic kneading sequence}. 
\end{definition}

By (historical) convention, every kneading sequence starts with the symbol $\1$ (with the exception of the \emph{trivial kneading sequence} $\ovl \star$, the unique sequence of period~$1$). This is no loss of generality because of the symmetry of $A_d$. We denote the subset of $A_d^\infty$ consisting of all sequences starting with the symbol $\1$ by  $\ItSpec$.

We denote periodic sequences by a bar above a finite sequence, such as $\ovl\star=\star\star\star\dots$ or $\ovl{\1\0}=\1\0\,\1\0\,\1\0\dots$. We also have preperiodic sequences such as $\1\ovl{\1\0}=\1\;\1\0\;\1\0\;\1\0...$.


\begin{definition}[Dynamical system associated to kneading sequence]
Every abstract kneading sequence $\nu$ ($\star$-periodic or not) has an associated dynamical system $(X_\nu,\sigma)$ where $X_\nu$ is a collection, described below, of infinite sequences over the alphabet $A_d
^\star$; the dynamics is given by $\sigma$, the left shift on sequences.  
\end{definition}

Intuitively, the collection $X_\nu$ models the Julia set of a unicritical polynomial (or an exponential) with kneading sequence $\nu$ (with certain necessary modifications when $\nu$ is $\star$-periodic because of the existence of bounded Fatou components). By definition, it contains the sequence $\nu$, called the \emph{critical value}, and the sequence $\star\nu$ called the \emph{critical point}, clearly with $\sigma(\star\nu)=\nu$. Moreover, it contains all \emph{postcritical points} $\nu_k:=\sigma^k(\star\nu)$ for $k\ge 1$, as well as all \emph{precritical points} of the form $w\star\nu$, where $w$ is an arbitrary finite, possibly empty word over $A_d$  (here and elsewhere, terms like $w\star\nu$ denote concatenation of the finite word $w$, followed by the symbol $\star$ and then the infnite sequence $\nu$).
 Finally, $X_\nu$ contains all sequences over $A_d$, except those of the form $w\nu$ where $w$ is a non-empty word over $A_d$: this way, we make sure that the critical point $\star\nu$ is the only $\sigma$-preimage of the critical value $\nu$.

We say that the \emph{depth} of a precritical point $w\star\nu$ equals $|w|+1$ (the depth is the number of iterations it takes the point to reach $\nu$). In particular, the depth of the critical point $\star\nu$ is $1$, and the depth of the critical value $\nu$ is $0$. 

We will often refer to the elements of $X_\nu$ as \emph{itineraries}.
A different point of view to construct the space $X_d$ is as a ``glueing space'' based on $A_d^\infty$ where all $d$ preimages of $\nu$ are identified (``glued'') into the single sequence $\star\nu$, and then requiring that the quotient be $\sigma$-invariant. These glueing spaces were investigated by Penrose for $d=2$ \cite{PenroseThesis}.

For $a,b\in X_\nu$, we define the value $\diff(a,b)\in\N\cup\{+\infty\}$ as the position of the first difference in the sequences $a$, $b$, where an entry $\star$ counts as ``wild card symbol'' that is not different from any symbol in $A_d$. 
We should note that $2^{-\diff(a,b)}$ is \emph{not} a metric on $X_\nu$ because it fails the triangle inequality at precritical points: for every precritical point $w\star\nu$ there are sequences in $X_\nu$ that are arbitrarily close to $w\star\nu$ but that are not close to each other: by replacing $\star$ with different symbols. 

Define cylinder sets $N_k(a)$ that consist of all sequences  $b\in X_\nu$ with $\diff(a,b)\ge k$. 
Let $\partial_k(a)$ denote the set of all precritical points in $N_k(a)\setminus \{a\}$ of depth up to $k-1$, except $\nu$. Note that these precritical points (but not $\nu$) are those that glue different cyclinders to each other so that the totally disconnected space $A_d^\infty$ becomes connected.

If $\nu$ is non-periodic, we define a topology on $X_\nu$ as follows. For $a\in X_\nu$ and $k\ge 1$ define the sets
\begin{equation}
U_k(a)=N_k(a)\sm \partial_k(a)
\label{Eq:Neighborhood_Uk}
\end{equation}
and declare these as a basis of open neighborhoods of $a$ in $X_\nu$. 

(It turns out that if $\nu$ is non-periodic then $X_\nu$ is path connected; this was shown in \cite{PenroseThesis} for $d=2$, and it follows similarly as in Lemma~\ref{Lem:CritPathInterval} below.)

If $\nu$ is $\star$-periodic, it is not true that $a,b \in X_\nu$ with $\diff(a,b)=\infty $ implies $ a=b$. This makes it difficult to define an adequate toplogy on $X_\nu$ (which is why Penrose had excluded this case). We will not define a topology on $X_\nu$ in this case either and overcome this issue by defining the critical path first and importing its topology to the Hubbard tree, which has the advantage of being finite in this case.

\begin{lemma}
\label{Lem:X_continuous}
For non-periodic $\nu$ the map $\sigma: X_\nu \mapsto X_\nu$ is continuous.
\end{lemma}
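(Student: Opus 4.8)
The plan is to check continuity directly against the declared neighborhood basis $\{U_k(a)\}_{k\ge1}$. Since these sets form a basis of open neighborhoods at every point, it suffices to prove that for each $a\in X_\nu$ and each $k\ge1$,
\[
\sigma\bigl(U_{k+1}(a)\bigr)\subseteq U_k\bigl(\sigma(a)\bigr).
\]
Granting this, for any open $V\ni\sigma(a)$ I pick $k$ with $U_k(\sigma(a))\subseteq V$, and then $U_{k+1}(a)$ is an open neighborhood of $a$ with $\sigma(U_{k+1}(a))\subseteq V$; as $a$ is arbitrary, $\sigma^{-1}(V)$ is open.

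Fix $a$, $k$ and $b\in U_{k+1}(a)$; I must show $\sigma(b)\in N_k(\sigma(a))$ and $\sigma(b)\notin\partial_k(\sigma(a))$. The first is the shift estimate: $\diff(a,b)\ge k+1$ means $a$ and $b$ agree (up to the wild card $\star$) in positions $1,\dots,k$, hence $\sigma(a)$ and $\sigma(b)$ agree in positions $1,\dots,k-1$, i.e.\ $\diff(\sigma(a),\sigma(b))\ge k$. It is this off-by-one that forces taking $U_{k+1}(a)$, not $U_k(a)$, as the source neighborhood.

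For the second I argue by contradiction: suppose $\sigma(b)\in\partial_k(\sigma(a))$, i.e.\ $\sigma(b)$ is a precritical point of depth at most $k-1$ with $\sigma(b)\ne\sigma(a)$ and $\sigma(b)\ne\nu$. The key point is that then $b$ itself is precritical of depth at most $k$. Indeed $b\ne\star\nu$, since otherwise $\sigma(b)=\nu$, which is excluded; as $\star\nu$ is the only element of $X_\nu$ beginning with $\star$, the sequence $b$ begins with some letter $x\in A_d$, and if $\sigma(b)=w\star\nu$ then $b=xw\star\nu$, a precritical point of depth $|w|+2\le k$. Furthermore $b\ne a$ (because $\sigma(b)\ne\sigma(a)$) and $b\ne\nu$ --- here non-periodicity enters, for $\nu$ then contains no $\star$, so $\sigma(\nu)$ is not precritical while $\sigma(b)$ is. Since also $b\in N_{k+1}(a)$, all of this says exactly that $b\in\partial_{k+1}(a)$, contradicting $b\in U_{k+1}(a)=N_{k+1}(a)\sm\partial_{k+1}(a)$. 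Hence $\sigma(b)\in U_k(\sigma(a))$, completing the argument.

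I do not expect a real obstruction here; the one place demanding care is the bookkeeping with the sets $\partial_k$ --- recognizing that the single choice $U_{k+1}(a)$ simultaneously yields the $N_k$-containment and rules out landing in $\partial_k(\sigma(a))$, and using that the shift changes the depth of a precritical point by exactly one (the sole exception $\sigma(\star\nu)=\nu$ being sidestepped by the hypothesis $\sigma(b)\ne\nu$) together with the non-periodicity of $\nu$ to eliminate the case $b=\nu$.
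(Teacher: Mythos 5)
Your proof is correct. Both you and the paper argue via the explicit relation between the cylinder bases at $a$ and at $\sigma(a)$, using that the shift decreases $\diff$ by exactly one and that precritical points of depth $\le k$ not equal to $\star\nu$ pull back to precritical points of depth one higher. The organizational difference is that you prove the forward containment $\sigma(U_{k+1}(a)) \subseteq U_k(\sigma(a))$ uniformly in $a$, whereas the paper computes the preimage $\sigma^{-1}(U_k(a))$ and therefore must treat $a=\nu$ as a separate case (since $\sigma^{-1}(\nu)=\{\star\nu\}$ is a single point rather than a full fiber of size $d$) and, for $a\ne\nu$, also invokes a $K$ with $\nu\notin N_k(a)$ for $k\ge K$. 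Your version avoids both of those auxiliary observations and makes explicit the bookkeeping with $\partial_{k+1}(a)$ versus $\partial_k(\sigma(a))$ that the paper dismisses with ``for the boundary points it is easily checked''; the one point you should perhaps flag is that the sole preimage of $\nu$ in $X_\nu$ is $\star\nu$ (so that $\sigma(b)\ne\nu$ genuinely forces $b$ to start with a letter of $A_d$), which is built into the definition of $X_\nu$.
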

\begin{proof}

Choose $a\in X_\nu$. In the special case $a=\nu$, we clearly have  $\sigma^{-1}(U_k(\nu))=U_{k+1}(\star\nu)$ and continuity follows.

Otherwise, $a \neq \nu$ and there is a $K$ such that $\nu \not \in N_k(a)$ for all $k \ge K$. 
We claim that we have the following identity
\[
\sigma^{-1}(U_k(a))=\bigcup_{\mathtt e \in A_d} U_{k+1}(\mathtt e a)
\;,
\]
which directly implies continuity.

To see the identity, first observe that it clearly holds for $N_k(a)$ in place of $U_k(a)$, and for the boundary points it is easily checked. 
\end{proof}

\Newpage

\subsection{Precritical points on the critical path}

We are now ready to construct the \emph{critical path} of a kneading sequence $\nu$ except the trivial kneading sequence $\nu=\ovl\star$. From now on, when speaking of a kneading sequence, we will always implicitly exclude this degenerate case.

\begin{definition}[Precritical points on critical path]
\label{Def:PrecPointsCritPath}
Let $\nu$ be a non-trivial abstract kneading sequence, possibly $\star$-periodic. 
By induction, for every $n\ge 0$, we construct a finite, totally ordered set $P_n$ of precritical points and an injective, order-preserving mapping $\vp: P_n \mapsto [0,1]$  as follows. We set $P_1=\{\star\nu,\nu\}$ where we define the order $\star\nu\prec\nu$; we set $\vp(\star\nu)=0$ and $\vp(\nu)=1$. 

By induction, we construct $P_{n+1}$ from $P_n$ by inserting additional precritical points between adjacent elements with respect to the recursively defined order and so that the value of $\phi$ is compatible with this order. In particular, in all $P_n$ the largest element will always be $\nu$, the smallest will be $\star\nu$, and all other elements have the form $w\star\nu$ where $w$ is a finite sequence over $A_d$. 

To do this, let $a\prec b$ be two adjacent points in $P_n$. Assume first that $b\neq\nu$. Then we can write $a=w\star\nu$ and $b=w'\star\nu$ with finite sequences $w, w'$ (here $a$ could be the empty word). Let $k:=\diff(w\star\nu,w'\star\nu)$. 

If $k=\infty$, then no precritical point between $w\star\nu$ and $w'\star\nu$ is ever inserted.

If $k<\infty$, construct the unique precritical point $w''\star\nu$ with depth $k$ such that $w''$ is a string of length $k-1$ over $A_d$ that does not differ from the first $k-1$ entries in $w\star\nu$ and $w'\star\nu$ (if at a given position, both sequences do not have a $\star$, then they must agree, and this is the symbol for $w''$ at this position; if exactly one sequence has a $\star$, then $w''$ coincides with the other sequence at this position; and $k<\infty$ implies that adjacent precritical points never have a $\star$ at the same position). 

We extend the order to $w''\star\nu$ such that $w\star\nu\prec w''\star\nu\prec w'\star\nu$ and set $\vp(w''\star\nu)=(\vp(w\star\nu)+\vp(w'\star\nu))/2$. 

In the special case $b=\nu$, we still have $a=w\star\nu$ and can proceed as above, inserting a new sequence between $w\star\nu$ and $\nu$. 

Finally, the set $P_{n+1}$ consists of $P_n$ and all precritical points constructed between adjacent points in $P_n$. 
\end{definition}

Clearly, every set $P_{n+1}\setminus P_{n}$ is a finite set of precritical points of depth at least $n+1$. 
In general, most elements in $P_n$ will have depth greater than $n$ because in the construction $|w''|$ is at least $\max\{|w|,|w'|\}+1$ (if not, it would have been constructed earlier), but it can be much greater.

If in the definition above the case $k=\infty$ never occurs, that is if in every $P_n$ between any two adjacent precritical points another precritical point is inserted, then by induction, we have $|P_{n+1}|=2|P_n|-1$ for $n\ge 1$ and thus $|P_n|=2^{n-1}+1$. In every case $|P_n|\le 2^{n-1}+1$.
Since every precritical point of depth $n$ must be contained in $P_n$, the number of precritical points of depth $n$ is at most $2^{n-1}+1$. 

Our next task is to investigate when the case $k=\infty$ occurs. It turns out that this happens only when $\nu$ is $\star$-periodic, and only when it is a ``bifurcation'' in the following sense.


\begin{definition}[Bifurcating $\star$-periodic kneading sequences]
We say that a $\star$-periodic kneading sequence $\nu=\ovl{\nu_1\nu_2...\nu_{p-1}\star}$ of period $p$ is a \emph{bifurcation} from period $q$ if $q$ strictly divides $p$ and there is a symbol $e\in A_d$ such that $\ovl{\nu_1\nu_2...\nu_{p-1}e}$ has exact period $q$ (but not period lower than $q$).
\end{definition}

It is not hard to check that if such a $q$ exists, then it is uniquely determined by $\nu$: indeed, if there exists a different $q'$ with the same properties (and possibly a different last symbol $e'$), then for $q'':=\gcd(q,q')$ the first $p-1$ entries of $\nu$ must be repetitions of its first $q''$ entries. This in turn implies that $e'=e$, and for all symbols in $A_d\sm\{e\}$ at position $p$ the resulting sequence has exact period $p$. If $\nu$ is not a bifurcation, then all symbols at position $p$ lead to exactly period $p$.

If $\nu$ is a bifurcation from period $q$, then we call the $q$-periodic sequence $\mu=\ovl{\nu_1\nu_2\dots\nu_{q-1}\star}$ the \emph{base sequence of the bifurcation sequence $\nu$}. The base sequence $\mu$ can itself be a bifurcation from lower period.


Here are some examples of bifurcations: $\nu_1=\ovl{\1\1\0\,\1\1\star}$ and $\nu_2=\ovl{\1\1\0\,\1\1\0\,\1\1\star}$ are bifurcations from period $3$ with base sequence $\mu=\ovl{\1\1\star}$, while $\nu_3=\ovl{\1\1\1\,\1\1\star}$ is a bifurcation from period $1$. Moreover, $\ovl{\1\1\0\,\1\1\1\,\1\1\0\,\1\1\star}$ is a bifurcation from $\nu_1$. However,  $\ovl{\1\1\0\,\1\1\1\,\1\1\star}$ is not a bifurcation (but a so-called renormalization). 
While the examples of bifurcations given above (all for $d=2$) correspond to the familiar bifurcations in the Mandelbrot set, our definition also admits ``non-standard'' bifurcations to kneading sequences that are not realized in complex dynamics: for instance, $\ovl{\1\0\0\,\1\0\star}$ and $\ovl{\1\0\1\,1\0\star}$ are both bifurcation from base sequence $\ovl{\1\0\star}$, but the second one does not appear in the Mandelbrot set.
(It turns out that the case $k=\infty$ does not occur for those non-standard bifurcations; see Proposition~\ref{Prop:bifurcations}). 


\begin{lemma}[All precritical points different]
\label{Lem:no_gaps}
If $\nu$ is either non-periodic, or if it is $\star$-periodic but not a bifurcation, then in the construction of the precritical points in Definition~\ref{Def:PrecPointsCritPath} we always have $\diff(w\star\nu,w'\star\nu)<\infty$ (and also in the special case near the critical value, $\diff(w\star\nu,\nu)<\infty$). In particular, each $P_n$ with $n\ge 1$ contains exactly $2^{n-1}+1$ mutually distinct elements. 
\end{lemma}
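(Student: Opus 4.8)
The plan is to prove the statement by induction on $n$, in the strengthened form: \emph{for every $n\ge 1$, $P_n$ has $2^{n-1}+1$ mutually distinct elements, and no two adjacent elements $a\prec b$ of $P_n$ (including the pair ending at $\nu$) satisfy $\diff(a,b)=\infty$.} Granting the ``no $\diff=\infty$'' assertion at level $n$, the count and distinctness at level $n+1$ come almost for free: since every adjacent pair of $P_n$ gets subdivided, $|P_{n+1}|=2|P_n|-1$; the $\vp$-values keep the newly inserted points strictly inside pairwise disjoint open intervals of $[0,1]$, hence distinct from one another; and if a new point coincided with an old one it would have to equal an endpoint $c$ or $d$ of the interval into which it is inserted, which is exactly a $\diff=\infty$ situation ruled out below. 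So the real content is the ``no $\diff=\infty$'' claim, and I would concentrate entirely on that.

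For the base case one checks directly that $\diff(\star\nu,\nu)=\infty$ is equivalent to: $\nu_i$ does not differ from $\nu_{i+1}$ for every $i\ge 1$ (with $\star$ treated as a wild card). Reading this off starting from $\nu_1=\1$: if $\nu$ is non-periodic over $A_d$ it forces $\nu$ constant, which is not a kneading sequence; and if $\nu$ is $\star$-periodic of period $p$ it forces $\nu=\ovl{\1^{p-1}\star}$, which is a bifurcation from period $1$ (replace the terminal $\star$ by $\1$ to get $\ovl{\1}$). Either way the hypothesis of the lemma is contradicted, so $\diff(\star\nu,\nu)<\infty$.

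For the inductive step, since $P_n$ has no $\diff=\infty$ gaps, passing to $P_{n+1}$ subdivides every adjacent pair, so each adjacent pair of $P_{n+1}$ consists of an endpoint of some adjacent pair $c\prec d$ of $P_n$ together with the inserted middle point $m=w''\star\nu$, whose depth $k=\diff(c,d)$ satisfies $|w''|=k-1\ge\max\{|w|,|w'|\}+1$ (where $c=w\star\nu$ or $\star\nu$, $d=w'\star\nu$ or $\nu$). Writing $\ell=|w|$, the inequality $k\ge\ell+2$ lets one compare $c$ and $m$ position by position: they already agree up to position $k$ (by the defining property of $w''$ and because position $k$ of $m$ is the wild card $\star$), so $\diff(c,m)=\infty$ is equivalent to $\sigma^{p}\nu$ not differing from $\nu$ for $p:=k-\ell-1\ge 1$; the analysis of $\diff(m,d)$ is identical, with $p$ replaced by $k-|w'|-1$ (or by $k$ if $d=\nu$). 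When $\nu$ is non-periodic, ``$\sigma^{p}\nu$ does not differ from $\nu$'' means $\sigma^p\nu=\nu$, contradicting non-periodicity; this settles the first case of the lemma.

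The genuine difficulty is the remaining $\star$-periodic case. There ``$\sigma^p\nu$ does not differ from $\nu$'' holds automatically whenever the period divides $p$, so the equivalence above is not enough by itself; one must show that the shift $p$ arising from an adjacent pair is \emph{never} a multiple of the period unless $\nu$ is a bifurcation. Concretely, I expect to prove — as an additional clause carried through the induction — that for every adjacent pair $c\prec d$ of $P_n$ one has $\diff(c,d)\not\equiv\operatorname{depth}(c)$ and $\diff(c,d)\not\equiv\operatorname{depth}(d)$ modulo the period, exploiting that $w''$ contains no $\star$, has the specific entries prescribed by the construction (agreeing with the $\star$-free positions of both $c$ and $d$ below index $k-1$), and that the depths obey the minimality inequality; and that if instead some $\sigma^r\nu$ with $1\le r<$ period does fail to differ from $\nu$, then the first $r-1$ entries of $\nu$ together with a suitable last symbol produce exact period $\gcd(r,\text{period})$, i.e.\ $\nu$ is a bifurcation. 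Keeping track of the prefix relations among adjacent precritical points through the induction, and translating the ``equal up to the wild card'' constraint into the divisibility/exact-period language of the bifurcation definition, is the main obstacle, and is presumably where the subtleties of ``non-standard'' bifurcations (cf.\ Proposition~\ref{Prop:bifurcations}) also enter.
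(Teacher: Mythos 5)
Your reduction is the same as the paper's: the non-periodic case and the base case are handled completely and correctly, and you have correctly isolated the crux, namely that for $\star$-periodic $\nu$ of period $p$ the shift $t'$ between any two adjacent precritical points (equivalently, your congruence conditions $\diff(c,d)\not\equiv\operatorname{depth}(c),\operatorname{depth}(d)\bmod p$) is never a multiple of $p$. But that crux is exactly what you do not prove: you announce it as "an additional clause I expect to carry through the induction" and call its verification "the main obstacle". Since the entire content of the lemma in the $\star$-periodic non-bifurcation case lives in that clause, the proposal as written is a plan rather than a proof.

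The missing verification is in fact short, and it is the one observation your sketch never makes. Let $c\prec d$ be adjacent in $P_n$ with shift $t$ not a multiple of $p$, and let $m=w''\star\nu$ be the inserted point, whose first $\star$ sits at position $k=\diff(c,d)$, i.e.\ exactly at the first position where $c$ and $d$ genuinely differ. At that position the relevant entries of $\nu$ are $\nu_s$ and $\nu_{t+s}$ for some $s\ge1$, and because $c$ and $d$ \emph{differ} there, neither entry is $\star$; hence $s\not\equiv0$ and $t+s\not\equiv0\pmod p$ (the $\star$'s of $\nu$ occupy precisely the positions divisible by $p$). The new shifts, between $m$ and $c$ and between $m$ and $d$, are exactly $s$ and $t+s$ (in some order), so neither is a multiple of $p$ and the invariant propagates. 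Without this step your induction does not close. A secondary, smaller gap: you also leave unproved the implication that $\diff(\nu,\sigma^r\nu)=\infty$ for some $r$ with $1\le r<p$ forces $\nu$ to be a bifurcation (from a period dividing $\gcd(r,p)$); this is needed to convert "shift not a multiple of $p$" into "$\diff<\infty$" under the hypothesis that $\nu$ is not a bifurcation, and should be written out rather than deferred to "the divisibility/exact-period language".
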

\begin{proof}
We compare two adjacent precritical points such as $w'\star\nu$ and $w\star\nu$. If $w'\star\nu$ and $w\star\nu$ differ among their first $\max\{|w'|,|w|\}$ entries, then there is nothing to prove (but this case does not arise in the recursive construction of the critical path). Otherwise, the comparison boils down to comparing $\nu=\nu_1\nu_2...$ with $\sigma^t{\nu}=\nu_{t+1}\nu_{t+2}...$ for $t=\left||w'|-|w|\rule{0pt}{10pt}\right|$.
We call $t$ the shift between the precritical points. If $\nu$ is non-periodic, clearly $\diff(\nu, \sigma^t(\nu))<\infty$ and hence there is always a new precritical point constructed between two adjacent precritical points. 

It thus remains to consider the case that $\nu$ is $\star$-periodic, say with period $p$. Our main claim is that $t$ is not a multiple of $p$ for all pairs of adjacent points.  Using this claim, $\diff(\nu, \sigma^t(\nu))=\infty$ implies that $\nu$ is a bifurcation from some period dividing $\gcd(t,p)<p$; this completes the proof of the lemma. 

We prove the claim using induction on all precritical points of $P_n$. For $P_1$ we have $t=1<p$. Now suppose $w\star\nu$ and $w'\star\nu$ are adjacent precritical points of $P_n$ with shift $t$ not a multiple of $p$ and between them we construct $w''\star\nu$. 

 At the first entry where $w\star\nu$ and $w'\star\nu$ differ, we compare the entries $\nu_{s}$ and $\nu_{t+s}$ for some $s\ge1$. We note for later use that \emph{the entries $\nu_s$ and $\nu_{t+s}$ are different from each other and from $\star$}. 
By construction, this position corresponds to the first $\star$ in the sequence $w''\star\nu$. So comparing $w''\star\nu$ with $w\star\nu$ boils down to comparing $\nu$ with $\sigma^s(\nu)$ or $\sigma^{t+s}(\nu)$ (depending on which of $w$ and $w'$ was shorter). In other words, if we denote the shift between $w''\star\nu$ and $w\star\nu$ by $t'$,  we have $t'\in \{s, t+s\}$. 

If $t'$ is a multiple of $p$, then $\nu_{t'}=\star$ and thus $\nu_s=\star$ or $\nu_{t+s}=\star$, but we had noted above that this is not the case.

The same argument holds for the shift of $w'\star\nu$ and $w''\star\nu$.
\end{proof}

\subsection{Limit points and Fatou intervals}
\label{Sub:LimitsFatouIntervals}

Our next goal is to extend the (countable or possibly finite) set $P_\infty:=\bigcup_{n\ge 1} P_n$ with the mapping $\phi\colon P_\infty\to[0,1]$ to an interval, the \emph{critical path} $C_\nu$, so that $\phi$ extends to a homeomorphism from $C_\nu$ to $[0,1]$. Using the order topology on domain and range, what we will construct is an order preserving bijection.

In the case of Lemma~\ref{Lem:no_gaps}, that is when the case $\diff(w\star\nu,w'\star\nu)=\infty$ does not occur, the set $P_\infty$ is a countable set of precritical points; if  $\nu$ is not $\star$-periodic, its closure is naturally an interval. We will shows this in Lemma~\ref{Lem:limitP} below.

However, for $\star$-periodic kneading sequences, taking the closure turns out to be more complicated for two reasons. If in the construction of $P_n$ in  Definition~\ref{Def:PrecPointsCritPath}, the case $\diff(w\star\nu, w'\star\nu)=\infty$ occurs, then we have $\phi(w'\star\nu)>\phi(w\star\nu)$ but no further precritical points are ever inserted between them. We close such gaps by inserting abstract \emph{Fatou intervals} between the boundary points. This problem occurs for $\star$-periodic sequences that are bifurcations. Moreover, for arbitrary $\star$-periodic sequences $\nu$ there may be  itineraries $a,b\in X_\nu$ with $\diff(a,b)=\infty$ that have the same $\phi$-values, so $\phi$ would fail to be injective. We solve this problem also by inserting \emph{Fatou intervals} between such points.

\medskip

The first step in the construction of the critical path is to construct limits of points in $P_\infty$, similar to the usual completion such as when extending the rational numbers (or rather the dyadic numbers) to the reals, but with subtle differences. We identify a set of limits $L_\nu\subset X_\nu\cap A_d^\infty$ as follows. We say that $a\in X_\nu\cap A_d^\infty$ is in $L_\nu$ if there exists a sequence $(a_n)_{n\ge 0}\subset P_\infty$ with $a_n\in P_n$ for all $n\ge 1$ and $a_0\in P_1$, and with the property that for all $n$ we have either $a_{n} \prec a_{n+2} \prec a_{n+1}$ or $a_{n+1} \prec a_{n+2} \prec a_{n}$, and such that $\lim_{n\to \infty} \diff(a_n,a)=\infty$. 
For each limit $a\in L_\nu$ the sequence $(a_n)$ that converges to $a$ is unique because of the condition $a_n\in P_n$. 

As constructed, $L_\nu\cap P_\infty=\emptyset$, but certain points in $L_\nu$ duplicate existing points in $P_\infty$. We define $M_\nu \subset L_\nu$ as the set of limit points $a \in L_\nu$ for which there exists a precritical point $w\star\nu \in P_\infty$ such that $\diff(a,w\star\nu)=\infty$ ($a\in A_d^\infty$ implies $a\neq w\star\nu$). Note that if $\nu$ is non-periodic then $X_\nu$ is Hausdorff so $M_\nu=\emptyset$. However in the $\star$-periodic case, in the sequence $w\star\nu$ the symbol $\star$ can be replaced by different elements in $A_d$, and up to two of these may be in $L_\nu$ (the point $w\star\nu$ can be approximated from the left or from the right with respect to the order $\prec$). 

The critical path will be the set $P_\infty\cup L_\nu$, extended by Fatou intervals as needed.

\begin{lemma}[Limit points of $P_\infty$]
\label{Lem:limitP}
If $\nu$ is not a bifurcation, then $P_\infty\cup (L_\nu \setminus M_\nu)$ is homeomorphic to $[0,1]$. More precisely, the map $\phi\colon P_\infty\to[0,1]$ extends to an order preserving bijection. 

If $\nu$ is a bifurcation, then $\phi$ still extends to an order preserving bijection from $P_\infty\cup (L_\nu \setminus M_\nu)$ to a closed subset of $[0,1]$, where gaps in the range correspond to precritical points $w\star\nu$ and $w'\star\nu$ with $\diff(w\star\nu,w'\star\nu)=\infty$ in Definition~\ref{Def:PrecPointsCritPath}, such that $\phi(w\star\nu)$ and $\phi(w'\star\nu)$ are different dyadic numbers in $[0,1]$.
\end{lemma}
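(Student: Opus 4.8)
The plan is to build the extension of $\phi$ by a standard order-completion argument, but with care taken at the precritical points where the triangle inequality fails. First I would establish the structural facts about $P_\infty$ as an ordered set: for every pair of adjacent points $a\prec b$ in some $P_n$, either a new point is inserted at all later stages (the generic case, in which $\diff$ is always finite by Lemma~\ref{Lem:no_gaps}) or the case $\diff(a,b)=\infty$ occurs and the pair stays adjacent forever. In the former situation, the $\phi$-values of the points inserted strictly between $a$ and $b$ are exactly the dyadic rationals strictly between $\phi(a)$ and $\phi(b)$ (by the bisection rule in Definition~\ref{Def:PrecPointsCritPath}), so $\phi(P_\infty)$ is either all of the dyadics in $[0,1]$ (when $\nu$ is not a bifurcation) or the dyadics minus a union of open dyadic intervals, one for each persistent gap (when $\nu$ is a bifurcation). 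This immediately pins down the range of the extended map and shows the range is the closure of $\phi(P_\infty)$, a closed subset of $[0,1]$.

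Next I would define the extension on $L_\nu\setminus M_\nu$. Given $a\in L_\nu$, take the (unique, by the remark after the definition of $L_\nu$) sequence $(a_n)$ with $a_n\in P_n$ converging to $a$; the nesting condition $a_n\prec a_{n+2}\prec a_{n+1}$ (or the mirror image) forces $\phi(a_n)$ to be a monotone-in-the-limit, nested sequence of dyadics whose defining intervals $[\phi(a_{n}),\phi(a_{n+1})]$ (up to orientation) shrink, so I set $\phi(a):=\lim_n\phi(a_n)$. I then check this is well-defined and that the resulting map $P_\infty\cup L_\nu\to[0,1]$ is order-preserving; the key point is that $\diff(a_n,a)\to\infty$ together with finiteness of all intermediate $\diff$'s controls where $a$ sits in the order relative to every element of $P_\infty$. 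The role of $M_\nu$ is precisely to remove the redundancy: a limit $a\in M_\nu$ has $\diff(a,w\star\nu)=\infty$ for some precritical point $w\star\nu\in P_\infty$, hence $\phi(a)=\phi(w\star\nu)$ would be a dyadic already attained, and deleting $M_\nu$ restores injectivity. I would also verify surjectivity onto the claimed range: every non-dyadic point of the closure of $\phi(P_\infty)$ is hit because its binary expansion dictates a nested sequence of adjacent pairs in the $P_n$'s, and finiteness of the relevant $\diff$'s (Lemma~\ref{Lem:no_gaps} in the non-bifurcation case, and an explicit check that persistent gaps have dyadic endpoints in the bifurcation case) guarantees the corresponding $a\in L_\nu$ exists and lies outside $M_\nu$.

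For the homeomorphism claim when $\nu$ is not a bifurcation: once $\phi$ is an order-preserving bijection from $P_\infty\cup(L_\nu\setminus M_\nu)$ onto $[0,1]$, and both sides carry the order topology, an order isomorphism between linearly ordered sets is automatically a homeomorphism for the order topologies, so the statement follows formally. I would spell out that the topology on $P_\infty\cup L_\nu\subset X_\nu$ induced by the basis $U_k(\cdot)$ agrees with the order topology on this subset — this uses that removing $\partial_k(a)$ in \eqref{Eq:Neighborhood_Uk} is exactly what cuts a cylinder down to an order-interval around $a$ — so the abstract order isomorphism is genuinely the homeomorphism asserted. In the bifurcation case there is no topological claim beyond "order-preserving bijection onto a closed subset," so nothing further is needed there.

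The main obstacle I expect is the bookkeeping around $M_\nu$ and the bifurcation gaps: one must show that each persistent gap $\diff(w\star\nu,w'\star\nu)=\infty$ really does have $\phi(w\star\nu)\neq\phi(w'\star\nu)$ both dyadic (this is where the bisection construction and the period structure from the definition of a bifurcation interact), that no two distinct persistent gaps share an endpoint or overlap, and that outside these gaps the argument of the non-bifurcation case goes through verbatim. A secondary subtlety is proving the uniqueness and correct orientation of the approximating sequence $(a_n)$ for a given target in $[0,1]$, i.e.\ that the "zig-zag" nesting condition in the definition of $L_\nu$ is exactly the combinatorial shadow of binary expansion; getting this matching precise, including the two-sided approximation phenomenon that produces the at-most-two preimages merged into each precritical point, is the technical heart of the argument.
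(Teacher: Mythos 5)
Your proposal follows essentially the same route as the paper's proof: extend $\phi$ to $L_\nu$ by taking $\lim_n\phi(a_n)$ along the unique nested approximating sequence, observe that $M_\nu$ is precisely the set of limit points whose $\phi$-value collides with a value already attained on $P_\infty$, deduce bijectivity from density of $\phi(P_\infty)$ and the ``$\diff(a,b)=\infty\Rightarrow a=b$ outside $M_\nu$'' criterion, and conclude the homeomorphism from the order isomorphism between linearly ordered sets. One small caveat: your added step comparing the order topology with the $U_k$-topology inherited from $X_\nu$ belongs to the separate Lemma~\ref{Lem:path_top} and only makes sense for non-periodic $\nu$ (the paper defines no topology on $X_\nu$ when $\nu$ is $\star$-periodic, which can happen in the non-bifurcation case, e.g.\ $\ovl{\1\0\star}$), so the ``homeomorphic to $[0,1]$'' claim of this lemma should be read, as you correctly state first, purely as an order-topology statement.
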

\begin{proof}
Since there are only finitely many elements in $P_\infty$ with a $\star$ among the first fixed number of entries, any limit of a sequence in $P_\infty$ must be in $A_d^\infty$, and the order on $P_\infty$ extends to all the limit points. The construction implies that $|\phi(a_n)-\phi(a_{n+1})|=2^{-n}$, so $\phi(a_n)$ is a Cauchy sequence and we can set $\phi(a):=\lim_{n\to\infty} \phi(a_n) \in[0,1]$. 
Since every $a\in L_\nu\sm M_\nu$ is approximated by a unique sequence, the value $\phi(a)$ does not depend on any choices.

Note that $\phi(a) \in \phi(P_\infty)$ if and only if $a\in M_\nu$. So, by extension we have a strictly monotone map $\phi\colon (P_\infty\cup (L_\nu\setminus M_\nu))\to[0,1]$. 

If $\nu$ is not a bifurcation, the values $\phi(P_n)$ contain all numbers $m/2^n$ for $m\in\{0,1,\dots,2^n\}$, so $\phi(P_\infty)$ is dense in $[0,1]$. It also follows that $\phi\colon (P_\infty\cup (L_\nu \setminus M_\nu))\to[0,1]$ is surjective. 

For injectivity, suppose there are $a,b \in P_\infty\cup (L_\nu\setminus M_\nu)$ with $\phi(a)= \phi(b)$. Let $a_n, b_n\in P_n$ elements of the sequences that approximate $a$ resp.\ $b$ (if $b\in P_\infty$, we allow that $b_n$ is the constant sequence). Since $\phi(a_n)$ and $\phi(b_n)$ have the same limit, we have $\lim_{n\to \infty} \diff(a_n,b_n)=\infty$, so $\diff(a,b)=\infty$. Since $a\not \in M_\nu$, this implies $a=b$. 
Therefore, $\phi\colon (P_\infty\cup (L_\nu\setminus M_\nu))\to[0,1]$ is a bijection and thus an order preserving homeomorphism with respect to the order topologies on domain and range. 

If $\nu$ is a bifurcation, then $\phi$ has gaps exactly at precritical points $w\star\nu$ and $w'\star\nu$ with $\diff(w\star\nu,w'\star\nu)=\infty$ so that in Definition~\ref{Def:PrecPointsCritPath} no precritical points are ever inserted between $w\star\nu$ and $w'\star\nu$.
\end{proof}


If $\nu$ is $\star$-periodic but not a bifurcation (for instance $\nu=\ovl{\1\0\star}$), then the homeomorphism from $P_\infty\cup(L_\nu\sm M_\nu)$ corresponds, in the actual Hubbard tree of a quadratic polynomial, to the path connecting the critical point to the critical value, but collapsing each interval through a bounded Fatou component to a point. The resulting interval does not change its topological type, and the same is true for the resulting Hubbard tree. But this is not in the spirit of our construction.

In order to deal with gaps in $P_\infty$ and limit points in $M_\nu$  that occur for certain $\star$-periodic sequences, we introduce \emph{Fatou intervals}.

\begin{definition}[Fatou intervals]
Let $\nu$ be a $\star$-periodic kneading sequence $\nu$. For a precritical point $w\star\nu$ (possibly $\nu$ itself) and a sequence $w'' \in X_\nu \cap A_d^\infty$ such that $\diff(w\star\nu, w'')=\infty$, we define the \emph{Fatou interval} 
\[ 
[w'', w\star\nu ]
\] 
as abstract interval homeomorphic to $[0,1]$ that is disjoint from $X_\nu$ and $P_\infty$,  except that the endpoints are sequences in $P_\infty$ resp.\ $X_\nu$. 

We define the $\sigma$ map on each of these intervals as a homeomorphism to the Fatou interval $[\sigma(w''), \sigma(w\star\nu)]$.
\end{definition}

Recall that for a $\star$-periodic kneading sequence $\nu$ we have an order-preserving homeomorphism  $\phi\colon \, P_\infty\cup (L_\nu \setminus M_\nu) \mapsto C$, where $C$ is a closed subset of $[0,1]$. In a first step we extend this to appropriate Fatou intervals so that we can extend the definition of $\phi$ to a surjective map. In a second step we have to deal with the points in $M_\nu$ by inserting further Fatou intervals. 

Recall that a gap occurs when there are two precritical points $w\star\nu$ and $w'\star\nu$ in Definition~\ref{Def:PrecPointsCritPath} with $\diff(w\star\nu,w'\star\nu)=\infty$ and $\phi(w\star\nu)<\phi(w'\star\nu)$ so that no precritical point is ever inserted between these two points, leaving a gap between their $\phi$-images. 

There clearly exists a (pre)periodic itinerary $w'' \in X_\nu \cap A_d^\infty$ (without $\star$) with $w\star\nu\prec w''\prec w'\star\nu$ such that $\diff(w\star\nu, w'')=\diff(w'',w'\star\nu)=\infty$. This sequence is unique because the shift between $w\star\nu$ and $w'\star\nu$ cannot be a multiple of the period of $\nu$ (see the proof of Lemma~\ref{Lem:no_gaps}). We call the sequence $w''$ \emph{the central itinerary in the gap between $w\star\nu$ and $w'\star\nu$}. We close the gap by inserting the two Fatou intervals $[w\star\nu, w'']$ and $[w'', w'\star\nu]$ in that order. Extend the map $\phi$ to these Fatou intervals such that $\phi(w''):=(\phi(w\star\nu)+\phi(w'\star\nu))/2$ and interpolating $\phi$ along these intervals. 

The extended map $\phi$ from $P_\infty\cup (L_\nu\sm M_\nu)$ union all Fatou intervals is now monotone and surjective onto $[0,1]$. There is a unique monotone way to also extend it to $M_\nu$ so that we obtain a monotone surjective map $\phi\colon P_\infty\cup L_\nu\to [0,1]$, but it fails to be injective at $M_\nu\subset L_\nu$. More precisely, there are ``adjacent'' points $w\star\nu\in P_\infty$ and $a\in M_\nu$ with $\diff(w\star\nu,a)=\infty$ and thus with equal values of $\phi$ (the order between $w\star\nu$ and $a$ can be either way). The solution is to to insert additional Fatou intervals between $w\star\nu$ and $a$ as above, while shifting the values of $\phi$ on either side so as to ``make room'' for the values required in the new Fatou interval (we lose the dyadic values of $\phi$ that we had defined earlier). The set $P_\infty$ is countable, so we need to insert at most countably additional Fatou intervals in this step, and if we make them short enough (as measured by the depth of the precritical point belonging to a Fatou interval), the total length is finite. Rescaling, we obtain an order preserving homeomorphism $\tilde\phi$ from $P_\infty\cup (L_\nu\sm M_\nu)$ union all Fatou intervals added in both steps to $[0,1]$ 
as required.

\subsection{Illustrating examples}
Let us illustrate Fatou intervals with a few examples in connection to Julia sets of complex admissible polynomials. Their critical paths are illustrated in Figure~\ref{Fig:crit_paths} and their Hubbard trees as defined later are shown in Figure~\ref{Fig:h_trees}. If a kneading sequence is not $\star$-periodic, for instance if it is strictly preperiodic, then $X_\nu$ is a dendrite (as shown in \cite{PenroseThesis} for $d=2$), and no Fatou intervals are necessary, corresponding to the fact that the filled-in Julia set does not contain bounded Fatou components. 

Let us now consider the $\star$-periodic sequence $\nu=\ovl{\1\star}$ of period $2$ corresponding to the unique quadratic polynomial with superattracting $2$-cycle. Here we only have two precritical points $\nu=\ovl{1\star}$ and $\star\nu=\ovl{\star\1}$: the critical value and the critical point, and together they form the entire critical orbit. There are no further precritical points in $P_\infty$, indeed $P_\infty=P_1$, and the two points $\nu$ and $\star\nu$ form a single gap. The central itinerary in this gap is $w''=\ovl{\1}=:\alpha$ (the $\alpha$ fixed point), and we introduce two Fatou intervals $[\nu,\alpha]$ and $[\alpha,\star\nu]$. Together they form the complete critical path (and in fact the complete Hubbard tree as defined below).

A bit more interesting is the period $4$ sequence $\nu=\ovl{\1\0\1\star}$ that bifurcates from the period $2$ sequence just discussed. Here $P_2$ consists of the three postcritical points $\star\nu, \nu$ and $\sigma^2(\nu)=1\star\nu$ and between the latter two, two Fatou intervals are constructed, which are separated by the central itinerary $\overline{\1\0}$. (Together with its image they form the ``little $\alpha$ fixed point'' of the $2$-renormalization of the Julia set). The set $P_\infty$ is infinite. ) 

These were two examples of $\star$-periodic kneading sequences that are bifurcations and require gaps to be filled by Fatou intervals. An example of a different kind is the $\star$-periodic kneading sequences $\nu=\ovl{\1\0\star}$ of period $3$ that is not a bifurcation (it is realized by the real ``airplane'' polynomial of period $3$). In this case, there are no gaps to fill because we have shown that $\phi(P_\infty)$ is already dense in $[0.1]$. However, there is the limit sequence $\ovl{\1\0\1}\in M_\nu$ with $\diff(\ovl{\1\0\1},\nu)=\infty$. A priori we could identify these two points and the analogous points on the backward orbit and this would yield a viable Hubbard tree: the same is true for the actual Hubbard tree of this polynomial with a real superattracting $3$-cycle because this tree contains countably many intervals in bounded Fatou components, all with disjoint closures, and collapsing these would yield a topologically conjugate tree (unlike the previous two examples where the tree would shrink to a point). Our approach is to keep the two points $\nu$ and $\ovl{\1\0\1}$ distinct and to connect them by another Fatou interval. This will avoid inconsistencies for example with the sequence described next.

A possibly surprising example is the bifurcation from $\nu=\ovl{\1\0\star}$ to $\nu'=\ovl{\1\0\1\,\1\0\star}$: the first is again the real ``airplane'' polynomial of period $3$, while $\nu'$ does not occur in the Mandelbrot set and is thus not complex admissible. Here $\phi(P_\infty)$ is still dense and thus central itineraries associated to gaps do not occur as for the complex admissible bifurcations. Once again, the limit sequence $\ovl{\1\0\1}\in L_\nu$ has $\diff(\ovl{\1\0\1,\nu})=\infty$ and we insert a Fatou interval between these. Here the sequence $\nu$ has period $6$ while the limit sequence only has period $3$, so $\ovl{\1\0\1}$ must be a branch point in the Hubbard tree that we will construct later. If we had identified $\nu$ with $\ovl{\1\0\1}$, then this would imply an identification also of $\nu$ with $\sigma^3(\nu)$ and the Hubbard tree of period $6$ would collapse to that of the period $3$ sequence $\ovl{\1\0\star}$.

In Proposition~\ref{Prop:bifurcations}, we will provide a full classification of $\star$-periodic kneading sequences and further explain the differences observed in these examples.

\subsection{The critical path and its properties}

\begin{definition}[The critical path]
For every kneading sequence $\nu$ we define the critical path as $C_\nu:=P_\infty\cup L_\nu$ union all Fatou intervals.
\end{definition}

In all cases, the critical path comes with a map $\tilde \phi\colon C_\nu\to[0,1]$ (that coincides with the original map $\phi\colon P_\infty\to[0,1]$ except when $M_\nu\neq\emptyset$). 

\begin{lemma}[Critical path is interval]
\label{Lem:CritPathInterval}
For every $\nu$, the critical path is homeomorphic to $[0,1]$, with an homeomorphism given by $\tilde\phi\colon P_\infty\cup L_\nu \mapsto [0,1]$, extended to all Fatou intervals.
\end{lemma}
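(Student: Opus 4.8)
The plan is to reduce Lemma~\ref{Lem:CritPathInterval} to the work already carried out in Section~\ref{Sub:LimitsFatouIntervals}, by distinguishing the three cases according to how complicated $\nu$ is, and in each case assembling an order-preserving bijection $\tilde\phi\colon C_\nu\to[0,1]$ and then invoking the standard fact that an order-preserving bijection between totally ordered sets is a homeomorphism for the order topologies (so it suffices to produce such a bijection). First I would recall that $C_\nu=P_\infty\cup L_\nu$ together with all inserted Fatou intervals carries a natural total order: the order $\prec$ on $P_\infty$ extends to $L_\nu$ as in Lemma~\ref{Lem:limitP}, and each Fatou interval $[w'',w\star\nu]$ is inserted ``in the right place'' with its internal order, so that $C_\nu$ is totally ordered and $\tilde\phi$ is order-preserving by construction.

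The case $\nu$ non-periodic (equivalently, by Lemma~\ref{Lem:no_gaps}, the case with no gaps and $M_\nu=\emptyset$): here $L_\nu$ is Hausdorff-completed, $M_\nu=\emptyset$ and there are no Fatou intervals, so $C_\nu=P_\infty\cup L_\nu=P_\infty\cup(L_\nu\setminus M_\nu)$ and Lemma~\ref{Lem:limitP} already gives that $\phi$ extends to an order-preserving bijection onto $[0,1]$; nothing further is needed. The case $\nu$ $\star$-periodic but not a bifurcation: by Lemma~\ref{Lem:limitP} the map $\phi$ is an order-preserving bijection from $P_\infty\cup(L_\nu\setminus M_\nu)$ onto $[0,1]$ (the range is all of $[0,1]$ since $\phi(P_\infty)$ is dense and there are no gaps), but $M_\nu$ may be nonempty. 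The construction in Section~\ref{Sub:LimitsFatouIntervals} inserts a Fatou interval between each ``adjacent'' pair $w\star\nu\in P_\infty$ and $a\in M_\nu$ with $\diff(w\star\nu,a)=\infty$; since $P_\infty$ is countable there are at most countably many such insertions, and choosing their lengths to decay (say geometrically in the depth of the associated precritical point) keeps the total length finite. One then rescales the interpolated map to land in $[0,1]$; the resulting $\tilde\phi$ is an order-preserving bijection from $C_\nu$ onto $[0,1]$, hence a homeomorphism. The case $\nu$ a bifurcation combines both phenomena: Lemma~\ref{Lem:limitP} gives an order-preserving bijection from $P_\infty\cup(L_\nu\setminus M_\nu)$ onto a closed proper subset $C\subseteq[0,1]$ whose complementary open gaps correspond exactly to pairs $w\star\nu,w'\star\nu$ with $\diff=\infty$. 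The first step of the Fatou-interval construction fills each such gap with the two intervals $[w\star\nu,w'']$ and $[w'',w'\star\nu]$ glued at the unique central itinerary $w''$, producing a monotone surjection onto $[0,1]$ from $P_\infty\cup(L_\nu\setminus M_\nu)$ together with these first-step intervals; the second step then inserts the countably many short Fatou intervals at the points of $M_\nu$ as in the previous case, after which a final rescaling yields the order-preserving bijection $\tilde\phi\colon C_\nu\to[0,1]$.

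Throughout I would lean on two bookkeeping facts that were essentially established earlier and only need to be cited: that $\tilde\phi$ is order-preserving and injective on each piece (Lemma~\ref{Lem:limitP} on $P_\infty\cup(L_\nu\setminus M_\nu)$; and each Fatou interval is homeomorphic to $[0,1]$ and inserted strictly between its endpoints, so no two points get the same value), and that $\tilde\phi$ is surjective (the dyadic values $m/2^n$ are hit when there are no gaps; gaps are exactly filled in step one; and each Fatou interval's image is a genuine subinterval). Given order-preserving-ness, injectivity, and surjectivity, $\tilde\phi$ is automatically a homeomorphism for the order topologies, because it and its inverse both send order-intervals to order-intervals. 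The main obstacle, and the only place where care is genuinely required, is verifying that after the two rounds of Fatou-interval insertions the order on $C_\nu$ is still a \emph{total} order with no ``new'' pairs of identified-but-distinct points and no hidden gaps — i.e., that the second step really does separate every problematic pair in $M_\nu\times P_\infty$ while not creating fresh coincidences — and that the interpolated, rescaled map remains monotone and onto $[0,1]$ (this is where finiteness of the total inserted length is used). Once that is checked, the three cases assemble into the statement.
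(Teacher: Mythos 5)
Your proof is correct and follows essentially the same route as the paper: reduce to Lemma~\ref{Lem:limitP} for $P_\infty\cup(L_\nu\setminus M_\nu)$ and then invoke the Fatou-interval extension of Section~\ref{Sub:LimitsFatouIntervals}, concluding via the order-preserving-bijection-is-a-homeomorphism fact. The paper's own proof is terser (it names only the non-periodic and bifurcation cases), so your explicit three-way split — in particular singling out the $\star$-periodic non-bifurcation case, where there are no gaps but $M_\nu\neq\emptyset$, and correctly flagging the only genuinely delicate point (that the second round of Fatou insertions separates every $M_\nu$-pair without creating new coincidences and leaves a monotone surjection after rescaling) — is a welcome clarification rather than a deviation.
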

\begin{proof}
In the non-periodic case, this was shown in Lemma~\ref{Lem:limitP} because $M_\nu=\emptyset$. For bifurcations, we need to extend the original map $\phi$ to Fatou intervals as explained in Section~\ref{Sub:LimitsFatouIntervals}. The resulting map $\tilde\phi$ is an order preserving bijection, hence a homeomorphism with respect to the order topology on domain and range.
\end{proof}

Let us discuss the critical path starting with an elementary property.

\begin{lemma}[$\alpha$ fixed point]
\label{Lem:alpha}
The fixed point $\alpha:=\ovl\1$ is always an element of the critical path $C_\nu$.
\end{lemma}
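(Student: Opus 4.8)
The goal is to show that $\alpha = \ovl\1$ lies in the critical path $C_\nu$ for every (non-trivial) kneading sequence $\nu$. Recall that $C_\nu = P_\infty \cup L_\nu$ together with the inserted Fatou intervals, and that by convention $\nu$ starts with the symbol $\1$ (so $\nu_1 = \1$), while the critical point $\star\nu$ starts with $\star$. The plan is to locate $\alpha$ as the limit (in the sense defining $L_\nu$, or as an endpoint of an inserted Fatou interval) of the sequence of precritical points $P_n$ that ``hug'' the critical value $\nu$ from below.

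First I would track how precritical points accumulate just to the left of $\nu$ in the order $\prec$. Since $\nu$ is the largest element of every $P_n$, at each stage we insert a new precritical point between the current largest proper precritical point $a = w\star\nu$ and $\nu$. By Definition~\ref{Def:PrecPointsCritPath}, if $k := \diff(w\star\nu, \nu) < \infty$, the inserted point is $w''\star\nu$ where $w''$ is a length-$(k-1)$ string agreeing with the first $k-1$ entries of both sequences; because $\nu \in A_d^\infty$ has no $\star$, the word $w''$ simply copies the first $k-1$ entries of $\nu$. Thus the precritical points approaching $\nu$ from the left have the form $\nu_1\nu_2\cdots\nu_{k-1}\star\nu$ for larger and larger $k$; as $k \to \infty$ these converge to the sequence $\nu_1\nu_2\nu_3\cdots = \nu$ in the $\diff$-metric. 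But the \emph{other} endpoint of the Fatou interval we insert at $\nu$ (or the limit point in $L_\nu \setminus M_\nu$ when no Fatou interval is needed) — I need to identify which sequence actually sits at the left endpoint. One should instead look at the points $P_n$ approaching $\nu$ \emph{together with} the alternating condition in the definition of $L_\nu$: the sequence of left neighbors $a_n$ of $\nu$ satisfies $a_n \prec a_{n+1} \prec \nu$, and $\diff(a_n, \nu) \to \infty$, so by definition the limit of the $a_n$ belongs to $L_\nu$ and equals $\nu$ itself — which would put $\nu$ (not $\alpha$) there. So $\alpha$ must instead arise at the \emph{other} end, near the critical point $\star\nu$, or as a central itinerary.

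The cleaner route: observe that the sequence $\star\nu$ starts with $\star$ and $\nu$ starts with $\1$, so $\diff(\star\nu, \nu) = \infty$ only in the trivial case — in general $\diff(\star\nu,\nu)=\infty$ since $\star$ is a wildcard, wait; in fact $\diff(\star\nu, \nu) = \infty$ because $\star$ never counts as a difference and $\sigma(\star\nu)=\nu$. Hence between $\star\nu$ and $\nu$ the very first insertion (passing to $P_2$) produces the precritical point of depth $2$, namely $\nu_1\star\nu = \1\star\nu$ (using $\nu_1 = \1$). Iterating: the chain of precritical points closest to $\star\nu$ on the side toward $\nu$ are $\1\star\nu$, then $\1\nu_1\star\nu = \1\1\star\nu$ or similar — more carefully, between $\star\nu$ and its current right-neighbor $w\star\nu$ we insert a point whose word $w''$ copies the agreed entries, and since $\star\nu$ has $\star$ in position $1$, $w''$ copies $w$ there, i.e. starts with $\nu_1 = \1$; inductively the left-accumulating precritical points are $\1^{m}\star\nu$ for growing $m$, whose $\diff$-limit is $\ovl\1 = \alpha$. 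This limiting sequence satisfies the alternation and $\diff \to \infty$ conditions, so $\alpha \in L_\nu$; and since $\alpha \notin P_\infty$ (it has no $\star$), either $\alpha \in L_\nu \setminus M_\nu \subset C_\nu$, or $\alpha \in M_\nu$ in which case the construction of Section~\ref{Sub:LimitsFatouIntervals} inserts a Fatou interval with $\alpha$ as an endpoint, so again $\alpha \in C_\nu$. In either case $\alpha \in C_\nu$, as claimed.

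The main obstacle is pinning down precisely which precritical points accumulate at $\alpha$ and verifying they satisfy the three bookkeeping conditions in the definition of $L_\nu$ (membership $a_n \in P_n$, the alternating order condition $a_n \prec a_{n+2} \prec a_{n+1}$ or its reverse, and $\diff(a_n, \alpha) \to \infty$). This requires a careful induction showing that at each stage the insertion between $\star\nu$ and its right neighbor indeed prepends $\1$'s, which in turn rests on the recursive formula for $w''$ in Definition~\ref{Def:PrecPointsCritPath} and the fact that $\star\nu$ carries a $\star$ in position $1$ forcing $w''$ to agree with the competitor there. The $\star$-periodic subtlety — whether $\alpha$ ends up in $M_\nu$ — is handled uniformly by the Fatou-interval construction, so it does not genuinely complicate the argument; it only means the conclusion ``$\alpha \in C_\nu$'' is reached via two slightly different mechanisms depending on the case.
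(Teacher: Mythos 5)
There is a genuine gap, and the proposal also contains several factual errors, so it does not constitute a correct proof.

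\textbf{The degenerate case is unhandled.} When $\nu=\ovl{\1\1\dots\1\star}$ (the only kneading sequences containing no symbol from $A_d\setminus\{\1\}$), one has $\diff(\star\nu,\nu)=\infty$, no precritical point is ever inserted, and $P_\infty=P_1=\{\star\nu,\nu\}$. In that situation $\alpha$ is \emph{not} a limit of precritical points and does not lie in $L_\nu$ (the defining condition needs $a_{n+2}$ strictly between $a_n$ and $a_{n+1}$, impossible with only two points); it enters $C_\nu$ as the \emph{central itinerary} of the single gap. The paper isolates this as Case~1. Your sentence ``$\diff(\star\nu,\nu)=\infty$ because $\star$ never counts as a difference and $\sigma(\star\nu)=\nu$'' is not a valid inference — $\sigma(\star\nu)=\nu$ is irrelevant — and the paragraph contradicts itself (you then say a point of depth~$2$ is inserted ``between $\star\nu$ and $\nu$'', which cannot happen if $\diff=\infty$). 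The correct fact is $\diff(\star\nu,\nu)<\infty$ precisely when $\nu$ contains some symbol other than $\1$ and $\star$, which is exactly the non-degenerate case.

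\textbf{The sequence you exhibit does not verify membership in $L_\nu$.} The right-neighbors of $\star\nu$ form a \emph{monotone} chain. The definition of $L_\nu$ requires a sequence $(a_n)$ with $a_n\in P_n$ and with $a_{n+2}$ strictly between $a_n$ and $a_{n+1}$; a monotone chain fails this nesting condition, so you have not shown $\alpha\in L_\nu$. The paper fixes this by \emph{zigzagging}: set $\rho_0=\nu$, $\rho_1=\star\nu$, and let $\rho_{n+1}$ be the first point inserted between $\rho_n$ and $\rho_{n-1}$; that sequence does satisfy the definition, and each $\rho_n$ (for $n\ge 2$) begins with at least $n-1$ symbols $\1$, so $\rho_n\to\alpha$. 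Finally, your identification of the inserted points as exactly $\1^m\star\nu$ is also wrong: for $\nu=\ovl{\1\0\1}$, for example, one obtains $\1\star\nu$, $\1\1\star\nu$, $\1\1\0\1\star\nu,\dots$ — the $\1$-prefixes lengthen, but the words are not pure strings of $\1$'s. That weaker and correct property (growing $\1$-prefixes) is all that the paper proves and all that is needed. Your final remark about $M_\nu$ is also a little off: $M_\nu\subset L_\nu\subset C_\nu$ by definition, so the Fatou-interval insertion does not change whether $\alpha\in C_\nu$; the case split that actually matters is the degenerate one above.
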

\begin{proof}
If $\nu$ does not contain an entry from  $A_d\setminus \{\1\}$, then it must have the form $\nu=\ovl{\1\1\dots\1\star}$ and the set $P_\infty$ equals $P_1=\{\star\nu,\nu\}$. In this case $C_\nu$ contains $\alpha$ as the central itinerary associated to the gap between $\star\nu$ and $\nu$. 

Otherwise, we construct a sequence of precritical points starting with $\rho_0:=\nu$ and $\rho_1:=\star\nu$. By induction, let $\rho_{n+1}$ be the precritical point constructed first between $\rho_n$ and $\rho_{n-1}$, the points exist by our assumption on $\nu$. It is easy to see by induction that the itinerary of each $\rho_n$ starts with at least $n-1$ consecutive entries $\1$. Therefore, this sequence of precritical points converges to the itinerary $\ovl\1\in C_\nu$. 
\end{proof}

Since the critical path is homeomorphic to $[0,1]$, the open intervals $(a,b)=\{ \zeta \in C_\nu: a \prec \zeta \prec b \}$, as well as $[\star\nu, a)$ and $(b,\nu]$ form a basis for the topology.

If $\nu$ is non-periodic, hence in $A_d^\infty$, then we have defined a topology on $X_\nu$, and $C_\nu$ inherits a topology as a subset of $X_\nu$. These topologies coincide:

\begin{lemma}[Equivalent topologies]
\label{Lem:path_top}
If $\nu$ is non-periodic, then the topology of $C_\nu$ is equivalent to the topology it inherits as a subset of $X_\nu$. 
\end{lemma}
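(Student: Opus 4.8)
The plan is to show that the two topologies on $C_\nu$ have the same basis of open sets, or more precisely that each basic open set of one topology contains a basic open set of the other around any given point. As noted just before the statement, when $C_\nu$ carries the order topology inherited from $\tilde\phi\colon C_\nu\to[0,1]$, a basis is given by the order-intervals $(a,b)$, $[\star\nu,a)$, $(b,\nu]$ with $a,b\in C_\nu$. Since $\nu$ is non-periodic we have $M_\nu=\emptyset$ and there are no Fatou intervals, so $C_\nu=P_\infty\cup L_\nu\subset X_\nu\cap A_d^\infty$; the subspace topology from $X_\nu$ has basis the sets $U_k(a)\cap C_\nu = (N_k(a)\sm\partial_k(a))\cap C_\nu$ for $a\in C_\nu$, $k\ge1$. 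So the task reduces to comparing, around a fixed point $a\in C_\nu$, the order-intervals containing $a$ with the sets $U_k(a)\cap C_\nu$.

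First I would fix $a\in C_\nu$ and analyze $U_k(a)\cap C_\nu$ for large $k$. I claim this set is itself an order-interval (or a one-sided order-interval if $a$ is an endpoint). Indeed, $N_k(a)\cap C_\nu$ consists of all $b\in C_\nu$ agreeing with $a$ in the first $k$ positions (modulo $\star$); removing $\partial_k(a)$ deletes the precritical points of depth $\le k-1$ lying in this cylinder other than $a$ itself. The key geometric fact — which is exactly what the recursive construction of Definition~\ref{Def:PrecPointsCritPath} encodes — is that these deleted precritical points are precisely the ``nearest'' precritical points of small depth flanking the cylinder $N_k(a)$ on either side in the order $\prec$, so that $N_k(a)\cap C_\nu$ minus these flanking points is an order-interval whose endpoints are those flanking precritical points (or $\star\nu$, resp.\ $\nu$, if there is nothing flanking on that side). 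This uses that $\tilde\phi$ is order preserving together with the observation that within a cylinder of the critical path the $\tilde\phi$-images form a subinterval. Hence every $U_k(a)\cap C_\nu$ is an order-open neighborhood of $a$, giving one inclusion of topologies.

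For the reverse inclusion I would take a basic order-neighborhood $(a_1,a_2)\ni a$ (the one-sided cases being analogous) and produce $k$ with $U_k(a)\cap C_\nu\subset(a_1,a_2)$. Since $\nu$ is non-periodic and $a\in A_d^\infty$ with $a\ne a_1$, $a\ne a_2$, and none of $a,a_1,a_2$ equals a precritical point with which it shares infinitely many entries (Hausdorffness of $X_\nu$ in the non-periodic case, already noted in the text), we have $\diff(a,a_1)<\infty$ and $\diff(a,a_2)<\infty$; choose $k$ larger than both, and also larger than the depths of $a_1,a_2$ if these are precritical. Then any $b\in N_k(a)\cap C_\nu$ agrees with $a$ past position $k$, so it lies strictly between $a_1$ and $a_2$ in the order (here one again invokes that $\tilde\phi$ is order preserving and that agreement of long prefixes forces closeness of $\tilde\phi$-values, which is the content of the $|\phi(a_n)-\phi(a_{n+1})|=2^{-n}$ estimate from Lemma~\ref{Lem:limitP}). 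This yields $U_k(a)\cap C_\nu\subset N_k(a)\cap C_\nu\subset(a_1,a_2)$, completing the equivalence.

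The main obstacle I anticipate is the first inclusion: carefully verifying that $U_k(a)\cap C_\nu$ really is an order-interval, i.e.\ that removing $\partial_k(a)$ from the cylinder exactly chops it down to an interval without creating new ``holes'' in the middle. This is where one must use the structure of Definition~\ref{Def:PrecPointsCritPath} — that every precritical point in the interior of a cylinder $N_k(a)$ has depth $\ge k$, so none of them lie in $\partial_k(a)$ — together with a careful bookkeeping of which small-depth precritical points sit at the two ``ends'' of the cylinder. I would handle this by induction on $k$ mirroring the inductive construction of the $P_n$, or alternatively by a direct argument: a precritical point $w\star\nu\in\partial_k(a)$ has depth $\le k-1$ and agrees with $a$ in the first $k$ positions, and one checks that any element of $N_k(a)\cap C_\nu$ on the far side of $w\star\nu$ from $a$ must differ from $a$ before position $k$, a contradiction — so $w\star\nu$ is genuinely a boundary point of the trace of the cylinder on $C_\nu$, not an interior one. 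Once this is pinned down the rest is routine.
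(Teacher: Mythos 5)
Your overall plan --- compare the basis of cylinder-type open sets $U_k(a)$ against the basis of order-intervals --- is exactly the strategy the paper uses. But you miss, and therefore re-derive poorly, the single observation that does all the work there: for $a,b,c\in C_\nu$ with $c\in[a,b]$ one has $\diff(a,c)\ge\diff(a,b)$ (clear by Definition~\ref{Def:PrecPointsCritPath} for points of $P_\infty$, and it passes to limits). This immediately shows that $N_k(a)\cap C_\nu$ is order-convex, i.e.\ a subinterval of $C_\nu$, and that is really all that both directions need.

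For the inclusion ``$U_k(a)\cap C_\nu$ is order-open'' you aim to prove the sharper claim that $U_k(a)\cap C_\nu$ is itself an order-interval, i.e.\ that every point of $\partial_k(a)$ sits on the boundary of the interval $N_k(a)\cap C_\nu$. This is in fact true (a precritical point $w\star\nu$ of depth $m<k$ has, on its two sides, itineraries differing at position $m$, and every element of $N_k(a)$ other than $w\star\nu$ has $a_m$ at position $m$, so $N_k(a)\cap C_\nu\sm\{w\star\nu\}$ lies entirely on one side of it), but you only gesture at it with ``the recursive construction encodes it''. The paper avoids having to prove this stronger statement: since $N_k(a)\cap C_\nu$ is an interval with $a$ in its interior (unless $a$ is an endpoint of $C_\nu$), and $\partial_k(a)$ is a finite set not containing $a$, removing $\partial_k(a)$ still leaves an order-open neighborhood of $a$. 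That is weaker, but it suffices.

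For the converse inclusion the justification you give is incorrect. The claim ``agreement of long prefixes forces closeness of $\tilde\phi$-values'' is not what Lemma~\ref{Lem:limitP} says --- the estimate $|\phi(a_n)-\phi(a_{n+1})|=2^{-n}$ there concerns consecutive terms of a specific nested approximating sequence in $P_\infty$, not arbitrary pairs with long common prefix --- and it is false as a uniform-in-$k$ statement: if $\nu$ starts with a long block of $\1$'s, then $\diff(\star\nu,\nu)$ is large even though $\tilde\phi(\star\nu)=0$ and $\tilde\phi(\nu)=1$. The conclusion you want does hold with your chosen $k$, but for the convexity reason, not a metric one: having taken $k>\max(\diff(a,a_1),\diff(a,a_2))$, neither $a_1$ nor $a_2$ lies in $N_k(a)$, and since $N_k(a)\cap C_\nu$ is a connected subset of $C_\nu$ containing $a\in(a_1,a_2)$ and avoiding both endpoints, it lies inside $(a_1,a_2)$. (Your additional requirement ``$k$ larger than the depths of $a_1,a_2$'' is not needed.) So both directions really rest on the order-convexity of the cylinder traces; extracting that fact first, as the paper does, makes the rest routine.
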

\begin{proof}
Fix $c\in C_\nu$ and let $U$ be an open interval in $C_\nu$ with respect to the topology of $C_\nu$. Our first observation is that if $c \in [a,b]$, then $\diff(a,c)\ge \diff(a,b)$. This holds by construction for elements of $P_\infty$ and the property extends to limits. Let $a$ and $b$ be the boundary points of $U$ and let $k=\max(\diff(a,c),\diff(b,c))+1$. Using the sets $U_k$ as defined in \eqref{Eq:Neighborhood_Uk}, it is clear that $U_k(c)\cap C_\nu \subset N_k(c)\cap C_\nu \subset U$, so $U$ is open in the subspace topology of $X_\nu$. 

For the converse, we show that for every basis set $U_k(c)\subset X_\nu$ (with $k\ge 1$) the intersection $U_k\cap C_\nu$ contains an open interval containing $c$. It follows from our first observation above that the set $N_k(c)\cap C_\nu$ is connected, so it is an interval of which $c$ is not an endpoint (except when $c$ is an endpoint of $C_\nu$). 
\end{proof}

\begin{figure}[htbp]
\includegraphics[width=.67\textwidth]{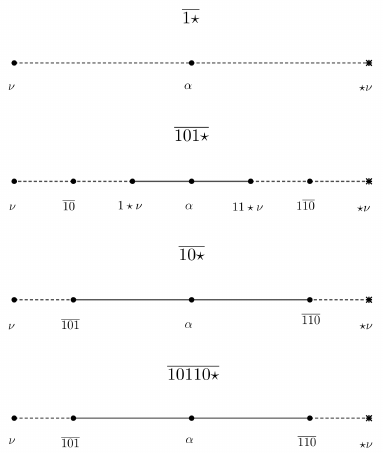}
\caption{The critical paths of four $\star$-periodic kneading sequences. Prominent Fatou intervals are indicated by dotted lines.}
\label{Fig:crit_paths}
\end{figure}

\Newpage

\section{The abstract Hubbard tree}
\label{Sec:tree}

\subsection{Constructing the Hubbard tree}

We will construct the Hubbard tree associated to a $\star$-periodic or non-periodic kneading sequence $\nu$ as union of (finitely or infinitely many) $\sigma$-iterates of the critical interval $C_\nu$. 

If, for $n\in\N$, the map $\sigma^n$ is injective on $C_\nu$, then the order topology on $C_\nu$ easily translates to $\sigma^n(C_\nu)$ so that $\sigma^n(C_\nu)$ is homeomorphic to $[0,1]$ as well. If this is not the case, we have to divide $C_\nu$ into finitely many sub-intervals on which $\sigma^n$ is injective.

For each $n\ge 2$, let $N(n)$ be the number of precritical points in $C_\nu$ of depth at most $n$. Denote these points by $\rho_i^n$ where $\star\nu \prec \rho_1^n \prec ... \prec \rho_{N(n)}^n\prec \nu$. Then each of the $N(n)+1$ intervals $[\star\nu, \rho_1^n], [\rho_1^n,\rho_2^n], ... , [ \rho_{N(n)}^n, \nu ]$ are mapped injectively under $\sigma^n$. Hence we have the order topology on the image. Thus each element in the finite set
\[
\mathcal I_n(\nu)=\left\{ \sigma^n([\star\nu, \rho_1^n]), \sigma^n([\rho_1^n,\rho_2^n]), ... , \sigma^n([ \rho_{N(n)}^n, \nu ])\right\}
\]
is homeomorphic to $[0,1]$. The first two cases are $\mathcal I_0(\nu)=\{[\star\nu,\nu]\}$ and $\mathcal I_1(\nu)=\{ \sigma([\star\nu,\nu])\}$. Note that the union of all elements of $\mathcal I_n(\nu)$ equals $\sigma^n(C_\nu)$. 

\begin{definition}[Finite trees $H_n$]
Define the sets 
\begin{equation*}
H_n:=\bigcup_{k=0}^{n}\sigma^k(C_\nu)
\;.
\end{equation*}
A subset $U \subset H_n$ is open if for all $k \in \{0,...,n\}$ and all intervals $J \in \mathcal I_k(\nu)$, the set $U\cap J$ is open in the topology of $J$. 
\end{definition}

In other words, the topology on $H_n$ is the quotient topology based on the intervals in $\mathcal I_n(\nu)$ with respect to the natural quotient given by identification of points with equal itineraries.

For a topological space $T$ we say that $P\subseteq T$ is a \emph{path} if $P$ is homeomorphic to $[0,1]$. A topological space $T$ is a \emph{tree} if it is a countable union of paths such that for every distinct $a,b \in T$ there exists a unique path $P$ from $a$ to $b$; this path is denoted $[a,b]$ (these paths are not oriented: $[a,b]=[b,a]$).  A point $x$ in a tree $T$ is called an \emph{endpoint} if $T\sm\{x\}$ is connected.


\goodbreak

\begin{lemma}[The finite trees $H_n$]
\label{Lem:Hn}
The sets $H_n(\nu)$ satisfy the following properties:
\begin{enumerate}
\item \label{it:tree} $H_n$ is a finite tree;
\item \label{it:endp} $H_n$ contains the set $\{\star\nu, \nu, ... \sigma^n\nu\}$, and all endpoints of $H_n$ are in this set; 
\item \label{it:inj} if a connected subset of $H_n$ does not contain the critical point, then all itineraries in this subset start with the same symbol in $A_d$;
\item \label{it:dense} 
for each itinerary $a\in H_n$, and every component $K$ of $H_n\setminus \{a\}$ there is a sequence of precritical points in $K$ that converges to $a$, except if $a$ is on the boundary of a Fatou interval in $K$. In particular, precritical points are dense in $H_n$ if $\nu$ is non-periodic. 
\end{enumerate}
\end{lemma}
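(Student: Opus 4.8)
The plan is to prove all four properties simultaneously by induction on $n$. The base case $n=0$ is essentially Lemma~\ref{Lem:CritPathInterval}: $H_0 = C_\nu$ is a path, hence a finite tree with endpoints $\{\star\nu,\nu\}$; property \eqref{it:inj} holds because the critical point $\star\nu$ is the unique itinerary in $C_\nu$ whose first symbol is $\star$ and any connected subset avoiding it lies in $(\star\nu,\nu]$, where all itineraries begin with $\nu_1 = \1$; property \eqref{it:dense} is immediate since $P_\infty$ is dense in $C_\nu$ away from Fatou intervals by construction (the ordered dyadic structure of $\phi$).

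\smallskip

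For the inductive step, assume the four properties for $H_n$ and consider $H_{n+1} = H_n \cup \sigma^{n+1}(C_\nu)$. First I would note $\sigma^{n+1}(C_\nu) = \bigcup_{J \in \mathcal I_{n+1}(\nu)} \sigma^{n+1}(J')$ for the subintervals $J'$ determined by the precritical points of depth $\le n+1$; each image is a path by the discussion preceding Definition~\ref{Lem:Hn}, so $\sigma^{n+1}(C_\nu)$ is itself a finite tree (it is $\sigma$ applied to $\sigma^n(C_\nu) \subseteq H_n$, a finite tree, and the topology is the quotient identifying equal itineraries). The key structural point is that $\sigma^{n+1}(C_\nu) \cap H_n$ is connected: any itinerary in the image of $C_\nu$ that also lies in $H_n$ — and more strongly, I claim the intersection is a subtree containing the critical value $\nu$ — because $\sigma^{n+1}(\star\nu) = \nu_{n+1}$ and $\sigma^{n+1}(\nu) = \nu_{n+1}$ are both postcritical points already in $H_n$, and one shows the whole overlap is the connected hull of such points. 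Granting connectedness of the intersection, the union of two finite trees along a connected (hence arc-connected) intersection is again a finite tree: uniqueness of arcs is checked by the standard van Kampen-style argument (a cycle would have to cross the intersection, but both pieces are trees). This gives \eqref{it:tree}.

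\smallskip

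Property \eqref{it:endp} follows because $H_{n+1}$ adds the points $\sigma^k(\nu)$ for $k \le n+1$, in particular $\sigma^{n+1}(\nu)$; and any endpoint $x$ of $H_{n+1}$ is either an endpoint of $H_n$ (if $x \notin \sigma^{n+1}(C_\nu)$, unchanged) or an endpoint of $\sigma^{n+1}(C_\nu)$, and the endpoints of the latter are images under $\sigma^{n+1}$ of $\{\star\nu,\nu\}$ or of precritical points $\rho_i^{n+1}$ of depth $\le n+1$ — but a precritical point of depth $d \le n+1$ maps under $\sigma^{n+1}$ into $\{\nu, \nu_1, \dots, \nu_{n+1-d}\} \cup \{\star\nu,\nu\}$... more carefully, $\sigma^{n+1}(\rho_i^{n+1})$ has depth $\le 0$ after enough shifts, i.e. equals some $\sigma^j(\nu)$; in any case such a point already lies in $H_n$ with more than one local branch there, so it cannot be a new endpoint. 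For \eqref{it:inj}: a connected subset $K$ of $H_{n+1}$ not containing $\star\nu$ must avoid $\star\nu$, and by \eqref{it:inj} for $H_n$ together with the corresponding statement on each $\sigma^{n+1}(J')$ (which holds since $\sigma^{n+1}$ is injective on $J'$ and $J'$ either contains $\star\nu$ or lies on one side of it, so itineraries in $\sigma^{n+1}(J')$ all share their first symbol), all itineraries in $K$ begin with the same symbol — using that $K$ is connected to propagate the common first symbol across the overlap. For \eqref{it:dense}: given $a \in H_{n+1}$ and a component $K$ of $H_{n+1}\setminus\{a\}$ not cut off by a Fatou-interval boundary, if $K$ meets $H_n$ apply induction; if $K \subseteq \sigma^{n+1}(C_\nu)$, pull back to a component of $C_\nu$ minus a point and use density of $P_\infty$ there, then push forward (precritical points map to precritical points of smaller depth, which lie in $H_{n+1}$).

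\smallskip

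The main obstacle I anticipate is proving that $\sigma^{n+1}(C_\nu) \cap H_n$ is connected — everything else is bookkeeping once this is in hand. The natural strategy is to show by a secondary induction that this intersection equals the convex hull (connected hull) inside $H_n$ of the postcritical points $\{\nu, \nu_1, \dots, \nu_n\}$ that are images of $C_\nu$, which requires understanding how the critical path's order interacts with the shift: namely, that $\sigma^{n+1}$ maps the ordered precritical-point structure of $C_\nu$ onto an ordered structure whose image already appears in $H_n$. One must be careful with the $\star$-periodic case, where Fatou intervals are present and $\sigma$ acts on them as a prescribed homeomorphism onto shifted Fatou intervals; the identification of Fatou intervals in $\sigma^{n+1}(C_\nu)$ with those already built in $H_n$ (when the relevant precritical/limit points have already appeared) needs to be checked to be consistent with the quotient topology — this is where the "except if $a$ is on the boundary of a Fatou interval" clause in \eqref{it:dense} does its work.
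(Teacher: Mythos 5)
Your induction scaffolding and the list of what needs to be checked are sound, and you correctly pinpoint the crux: to get $\eqref{it:tree}$ you must control how the new material sits inside $H_n$, so that the union is again uniquely arc-connected. But you do not resolve this — you say so yourself (``The main obstacle I anticipate is proving that $\sigma^{n+1}(C_\nu)\cap H_n$ is connected — everything else is bookkeeping once this is in hand''), and the ``secondary induction'' you gesture at (that the overlap is the connected hull of certain postcritical points) is not carried out. As written, the argument has a hole exactly where the proof needs an idea.

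The paper sidesteps the difficulty by reducing the amount of new material before asking the connectivity question. Setting $S_n:=[\sigma^n\nu,\sigma^{n-1}\nu]\subset H_n$, one first shows $H_{n+1}=H_n\cup\sigma(S_n)$: the reason is that $\sigma^n(C_\nu)\setminus S_n$ lies in $H_{n-1}$ (its endpoints are postcritical points covered by the inductive $\eqref{it:endp}$, and in fact $H_n\setminus H_{n-1}$ is a single half-open arc terminating at $\sigma^n\nu$, which is contained in $S_n$), so its $\sigma$-image is already in $H_n$. Hence the only genuinely new piece is $\sigma(S_n)$, a \emph{single} arc (once continuity of $\sigma$ on $S_n$ is checked, which the paper does via the order-preserving correspondence between intervals of $\mathcal I_n$ meeting $S_n$ and intervals of $\mathcal I_{n+1}$ meeting $\sigma(S_n)$ — a step you also compress). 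For a single arc, the overlap question is easy: let $b=\sup\{x\in\sigma(S_n): x\in H_n\}$; pulling $b$ back to $a\in S_n$, one sees $[\sigma^n\nu,b]=\sigma([\sigma^{n-1}\nu,a])\subset H_n$, so $\sigma(S_n)\cap H_n$ is the initial segment $[\sigma^n\nu,b]$ and $H_{n+1}$ is $H_n$ with one new edge $[b,\sigma^{n+1}\nu]$ attached at $b$. This makes $\eqref{it:tree}$ and $\eqref{it:endp}$ immediate, and $\eqref{it:inj}$ then follows because the new branch $B=(b,\sigma^{n+1}\nu]$ contains no precritical or postcritical point of depth $\le n+2$, so it is the image of a subinterval $B_C$ of $C_\nu$ on which all itineraries agree in their first $n+2$ entries. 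Your proposal never reduces to a single arc and therefore needs to prove connectivity of a potentially complicated overlap $\sigma^{n+1}(C_\nu)\cap H_n$; that is a harder statement than what the inductive hypotheses readily provide, and it is exactly the step you leave open. If you want to pursue your route, you would at minimum need to show that $\sigma^{n+1}(C_\nu)\setminus\sigma(S_n)\subset H_n$ (which is the paper's reduction in disguise), after which the van Kampen-style gluing you invoke becomes unnecessary.
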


\begin{proof}
We prove this by induction. For $n=0$, the set $H_0=[\star\nu,\nu]=C_\nu$ is homeomorphic to $[0,1]$ and thus a tree that satisfies all given properties.

Now suppose $H_n$ satisfies all listed properties; we will show the same for $H_{n+1}=H_n \cup \sigma(\sigma^n(C_\nu))$. 

Set $S_n:=[\sigma^n \nu,\sigma^{n-1} \nu]\subset H_n$. 
We claim that $H_{n+1}=H_n \cup \sigma(S_n)$. This is obvious for $n=0$, interpreting $\sigma^{-1}(\nu)$ as $\star\nu$.

For $n\ge 1$, observe that $\sigma^n(C_\nu)\setminus S_n\subset H_{n-1}$ because $\sigma^n(C_\nu)\setminus S_n\subset H_n$ by construction and all its endpoints are in $H_{n-1}$, so we can apply \eqref{it:endp}. Therefore $\sigma(\sigma^n(C_n))\subset H_n\cup \sigma([\sigma^n\nu,\sigma^{n-1}\nu])$ and the claim follows.

Now we prove properties \eqref{it:tree}, \eqref{it:endp}, and \eqref{it:inj} distinguishing the two cases whether or not  $\star\nu \not \in S_n$.

If $\star\nu \not \in S_n$, the main step is to establish that $\sigma(S_n)$ is a path in the topology of $H_{n+1}$. We know that $S_n\subset H_n$ is homeomorphic to $[0,1]$ by inductive hypothesis. By inductive assumption~\eqref{it:inj} all entries in $S_n$ start with the same symbol, so it maps injectively by $\sigma$. Since $H_n$ and $H_{n+1}$ are compact as finite unions of sets homeormorphic to $[0,1]$, it suffices to show that $\sigma\colon S_n\to\sigma(S_n)$ is continuous; then this is a homeomorphism and $\sigma(S_n)$ is a path.

Let $U$ be an open subset of $\sigma(S_n)$ in $H_{n+1}$, i.e. $U\cap J'$ is open for all $J' \in \mathcal I_{n+1}$. 

For each $J\in \mathcal I_n$, the set $J\cap  S_n$ does not contain the critical point, so their $\sigma^n$-preimages do not contain  precritical points of depth up to $n+1$. Hence, we have a one-to-one correspondence between those $J \in \mathcal I_n$ for which $J\cap  S_n$ is non-empty and those $J'\in \mathcal I_{n+1}$ for which $J'\cap\sigma( S_n)$ is non-empty. Moreover, the map $\sigma$ from $J\cap  S_n$ to the corresponding $J'\cap\sigma( S_n)$ is an order-preserving homeomorphism. Thus, the preimage of $U\cap J'$ in $J$ is open for the interval $J$ corresponding to $J'$. And thus, the preimage of $U\cap J'$ is open in all other elements of $\mathcal I_n$ as well. We conclude that $\sigma$ is continuous.

Therefore, $\sigma(S_n)$ is indeed a path from $\sigma^{n+1}\nu$ to $\sigma^n\nu$, but at this point, it is not clear that it is the only path; but we can define anyway $S_{n+1}:=[\sigma^n\nu, \sigma^{n+1}\nu]:=\sigma(S_n)$. 

Define $b=\sup\{ x \in  S_{n+1}\colon\, x \in H_n\}$. Since paths and finite trees are closed, $b$ itself lies in the intersection $H_n \cap S_{n+1}$. Let $a\in S_n$ be the unique preimage of $b$. Then $[\sigma^{n-1}\nu,a]\in H_{n-1}$, so its $\sigma$-image is the path $[\sigma^n\nu,b]\subset H_n$. At the same time, $\sigma([\sigma^{n-1}\nu,a])$ equals the subpath from $\sigma^n\nu$ to $b$ in $S_{n+1}$. Therefore, all points in $S_{n+1}$ between $\sigma^n\nu$ and $b$ are in $H_n$, while the remaining points in $S_{n+1}$ are not in $H_n$. Therefore, $H_{n+1}$ is a finite tree that extends $H_n$ by one additional path $[b,\sigma^{n+1}\nu]$. This completes the proof of properties \eqref{it:tree} and \eqref{it:endp}.

The new branch $B:=(b, \sigma^{n+1}\nu]$ does not contain the critical point or the points $\nu$, $\sigma\nu$, \dots, $\sigma^n \nu$ because all these points are already in $H_n$. Therefore $B$ is the $\sigma^{n+1}$-image of a subinterval of the critical path, say $B_C$, that does not contain precritical points of depth in $\{1,\dots,n+2\}$. By construction of the critical path, all itineraries on $B_C$ share the first $n+2$ entries, so all itineraries in $B$ start with the same symbol in $A_d$, proving  \eqref{it:inj}.

If $\star\nu \in S_n$, we split up $S_n$ into $[\sigma^{n-1}\nu, \star\nu] $ and $[\star\nu, \sigma^n\nu]$. The image of the first sub-path lies in $H_{n}$, so there is nothing to show and $H_{n+1}=H_n\cup\sigma([\star\nu, \sigma^n\nu])$. For the second sub-path we can proceed just as in the first case; the fact that $\star\nu$ is now a boundary point of an interval does not affect injectivity of $\sigma$.

Regarding $\eqref{it:dense}$, since the property holds for the critical path, it also holds for the branch we added. 
\end{proof}

In the previous proof, it is possible that $b=\sigma^{n+1}\nu$. In this case the Hubbard tree $\Hub(\nu)$ is finite: $\Hub(\nu)=H_{n+1}=H_n$. In particular, if $\nu$ is $\star$-periodic with period $p$, then $\Hub(\nu)=H_{p-1}$. This implies that Fatou intervals only occur in finite trees. However, we will show later that the Hubbard tree can have infinitely many endpoints as well (but only for non-periodic $\nu$). 

\begin{definition}
\label{Def:Htree}
For each kneading sequence $\nu$, we define its \emph{Hubbard tree}
\[
\Hub(\nu)=\bigcup_{n=0}^\infty \sigma^n(C_\nu)
\]
with the following topology:
\begin{itemize}
\item
if $\nu$ is non-periodic, the topology on $\Hub(\nu)$ is the subspace topology of $X_\nu$;
\item
if $\nu$ is $\star$-periodic with period $p$, the Hubbard tree is given the same topology as $H_{p-1}$.
\end{itemize}
\end{definition}

These two cases are less different than it might seem. In fact, if $\Hub(\nu)$ is a finite tree, that is $\Hub(\nu)=H_n$ for some $n$, then we can define the topology on $\Hub(\nu)$ equivalently as the topology coming from $H_n$. This is the content of the following lemma (which extends the corresponding result on the critical path, as stated in Lemma~\ref{Lem:path_top}).

\begin{lemma}[Equivalent topologies]
\label{Lem:top_equiv}
If the Hubbard tree is finite, i.e.\ $\Hub(\nu)=H_n$ for some $n$, then the topologies of $H_n$ and of $\Hub(\nu)$ are equivalent. 
\end{lemma}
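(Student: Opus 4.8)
The plan is to compare the two topologies on $\Hub(\nu)=H_n$ directly, using that both are defined through the same finite collection of intervals but differ in how they handle neighborhoods near precritical points and Fatou intervals. In the non-periodic case, $\Hub(\nu)$ carries the subspace topology of $X_\nu$, and I would like to reuse Lemma~\ref{Lem:path_top} as much as possible. The key structural fact is that $H_n=\bigcup_{k=0}^n\sigma^k(C_\nu)$ is a finite union of the intervals $J\in\mathcal I_k(\nu)$ for $k\le n$, each homeomorphic to $[0,1]$, and that on each such $J$ the subspace topology from $X_\nu$ restricts (by Lemma~\ref{Lem:path_top} applied to the relevant iterate of the critical path, together with continuity of $\sigma^k$) to the order topology of $J$. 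Hence the identity map from $(\Hub(\nu),\text{top of }H_n)$ to $(\Hub(\nu),\text{subspace top of }X_\nu)$ is continuous on each $J$, and since there are finitely many $J$ and $H_n$ carries the quotient/gluing topology from them, the identity is continuous; that is, every subspace-open set is $H_n$-open.

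For the reverse inclusion — every $H_n$-open set is subspace-open — I would argue pointwise. Fix $c\in H_n$ and an $H_n$-open set $U\ni c$. I need a basic neighborhood $U_k(c)\cap H_n$ (for the $X_\nu$-topology, using \eqref{Eq:Neighborhood_Uk}) contained in $U$. The finitely many intervals $J\in\mathcal I_k(\nu)$, $k\le n$, containing $c$ each meet $U$ in a set open in $J$, hence containing an order-interval around $c$ in $J$ with endpoints $a_J\prec c\prec b_J$ (or a one-sided interval if $c$ is an endpoint of $J$). Using the observation from the proof of Lemma~\ref{Lem:path_top} — that $c\in[a,b]$ forces $\diff(a,c)\ge\diff(a,b)$, valid on each $\sigma^k(C_\nu)$ by construction and passing to limits — I can choose $k$ larger than $\max_J\{\diff(a_J,c),\diff(b_J,c)\}$. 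Then $U_k(c)\cap H_n$ lies inside every such order-interval, hence inside $U\cap J$ for every $J$ meeting $c$; and $U_k(c)\cap H_n$ meets no $J$ not containing $c$ once $k$ is large enough, since only finitely many intervals are involved and $c$ has a definite $\diff$-distance from each of the (finitely many) others. Therefore $U_k(c)\cap H_n\subseteq U$, so $U$ is subspace-open.

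The $\star$-periodic case is immediate since the topology of $\Hub(\nu)$ is \emph{defined} as that of $H_{p-1}$, and if $\Hub(\nu)=H_n$ for some other $n$ one only needs that the gluing topologies from $\mathcal I_n(\nu)$ and from $\mathcal I_{p-1}(\nu)$ agree; but $\Hub(\nu)=H_{p-1}$ already and any $H_n$ with $n\ge p-1$ equals it as a set, with the same defining intervals up to refinement, so this is a routine check. I expect the main obstacle to be bookkeeping in the second half: making the choice of $k$ uniform over the finitely many intervals $J\in\mathcal I_k(\nu)$ while simultaneously ensuring $U_k(c)$ avoids the intervals not through $c$, and handling the Fatou-interval boundary cases where $c$ may be an endpoint of some $J$ but interior to another. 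Once one is careful that $\diff$ is finite and bounded below between $c$ and each of the finitely many relevant endpoints and disjoint intervals, the argument closes exactly as in Lemma~\ref{Lem:path_top}.
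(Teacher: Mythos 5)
Your proof is correct, and one half of it follows a genuinely different route from the paper's. For the inclusion ``$H_n$-open $\Rightarrow$ subspace-open'' you argue exactly as the paper does: pick $k(J)$ for each of the finitely many intervals $J$ through $c$ via Lemma~\ref{Lem:path_top}, take a uniform $k$, and check that $U_k(c)$ avoids the intervals not through $c$ (you are in fact slightly more explicit than the paper about this last point, which the paper leaves implicit). For the converse inclusion, however, the paper does \emph{not} pass through the quotient-topology universal property; instead, given a basic set $U_k(a)$, it directly constructs an $H_n$-open neighborhood $U\ni a$ inside it, namely a neighborhood bounded by precritical points of depth at least $k+1$ and containing no precritical points of smaller depth, using the density statement of Lemma~\ref{Lem:Hn}\eqref{it:dense} together with the finiteness of precritical points of bounded depth; the containment $U\subset U_k(a)$ then follows from the $\diff$-monotonicity along paths. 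Your alternative --- showing the inclusion $J\hookrightarrow X_\nu$ is continuous for the order topology on each defining interval $J$ and invoking the definition of the gluing topology --- is cleaner structurally, but note that transporting Lemma~\ref{Lem:path_top} from $C_\nu$ to $J=\sigma^j(I)$ requires one extra step you leave implicit: continuity of $\sigma^j$ gives only that $\sigma^j|_I\colon I\to J$ is a continuous bijection, and you need compactness of $I$ and Hausdorffness of $X_\nu$ (which holds for non-periodic $\nu$, as the paper remarks) to conclude it is a homeomorphism --- although for the one inclusion you actually use, continuity of $\sigma^j$ alone suffices. Two small further remarks: your worry about Fatou-interval boundary cases is vacuous here, since in the non-periodic case (the only non-trivial one) the finite tree contains no Fatou intervals; and your remark about reconciling $\mathcal I_n$ with $\mathcal I_{p-1}$ in the $\star$-periodic case is unnecessary, since the lemma's conclusion there is purely definitional.
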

\begin{proof}
This is true by definition in the $\star$-periodic setting. 

If $\nu$ is non-periodic and $\Hub(\nu)$ is finite tree, then $\Hub(\nu)$ does not contain Fatou intervals. Pick an open basis set $U_k(a)\subset X_\nu$; then $U_k(a)\cap \Hub(\nu)$ is an open basis set for $\Hub(\nu)$. 
By Lemma~\ref{Lem:Hn}\eqref{it:dense} and the fact that the number of precritical points up to a given depth is finite on the critical path, and thus on finite trees, we can find an open neighborhood $U$ of $a$ (in the topology of $H_n$) whose boundary points are precritical points of depth at least $k+1$ such that all precritical points in $U$ have depth at least $k+1$ as well. This implies $U \subset U_k(a)$. 

For the converse, pick an open set $U \subset H_n$ and $a\in U$. Let $J \in \mathcal I_j$ for $j\le n$ be any interval containing $a$, necessarily as an interior point. Since $\nu$ is non-periodic, there exists $k(J)$ such that $U_{k(J)}(a)\cap J \subset U\cap J$ because the same holds for the critical path by Lemma~\ref{Lem:path_top}. Since the number of intervals $J$ is finite in $H_n$, we can choose $k=k(J)$ uniform over all $J$. We conclude $U_k(a) \subset U$. 

Combining these two arguments, every neighborhood in the topology of $H_n$ contains a neighborhood in the topology of $\Hub(\nu)$ and vice versa, so the topologies are equivalent. 
\end{proof}

\begin{lemma}[Continuity]
\label{Lem:Hcont}
The map $\sigma: \Hub(\nu) \mapsto \Hub(\nu)$ is continuous.
\end{lemma}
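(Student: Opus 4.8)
The plan is to reduce continuity of $\sigma$ on $\Hub(\nu)$ to the two separate regimes for which we already have the tools, namely the non-periodic case where $\Hub(\nu)$ carries the subspace topology of $X_\nu$, and the $\star$-periodic case where $\Hub(\nu)=H_{p-1}$ is a finite tree with the quotient topology built from the intervals $\mathcal I_k(\nu)$. In the non-periodic case, continuity of $\sigma$ on all of $X_\nu$ was established in Lemma~\ref{Lem:X_continuous}, so I would first observe that $\sigma$ maps $\Hub(\nu)$ into $\Hub(\nu)$ (immediate from the definition $\Hub(\nu)=\bigcup_n\sigma^n(C_\nu)$, since $\sigma(\sigma^n(C_\nu))=\sigma^{n+1}(C_\nu)$), and then continuity of the restriction follows because the topology on $\Hub(\nu)$ is exactly the subspace topology inherited from $X_\nu$ and a restriction of a continuous map to a subspace, with image in another subspace, is continuous. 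That disposes of the non-periodic case in one line.

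For the $\star$-periodic case, where $\Hub(\nu)=H_{p-1}$ is finite, I would argue directly from the quotient/patchwork description of the topology: a set $U\subset H_{p-1}$ is open precisely when $U\cap J$ is open in $J$ for every $J\in\bigcup_{k=0}^{p-1}\mathcal I_k(\nu)$. So it suffices to show that for each such $J$ and each open $U\subset H_{p-1}$, the set $\sigma^{-1}(U)\cap J$ is open in $J$. The key structural fact, already used repeatedly in the proof of Lemma~\ref{Lem:Hn} (in showing $\sigma(S_n)$ is a path), is that $\sigma$ restricted to any interval $J\in\mathcal I_k(\nu)$ is an order-preserving homeomorphism onto a finite union of intervals of $\mathcal I_{k+1}(\nu)$ (it is monotone on $J$ because $J$ was built as a maximal piece on which the relevant iterate is injective, and its image is a concatenation of intervals from the next generation). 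Since $k\le p-1$ and, for $\star$-periodic $\nu$, the tree stabilizes at $H_{p-1}$, the image $\sigma(J)$ still lies in $H_{p-1}$ and decomposes into finitely many intervals each of which is an element of the patchwork defining the topology of $H_{p-1}$. Hence $U\cap\sigma(J)$ is open in $\sigma(J)$, and pulling back along the homeomorphism $\sigma|_J$ shows $\sigma^{-1}(U)\cap J$ is open in $J$. As $J$ ranges over the finitely many intervals, this gives openness of $\sigma^{-1}(U)$ in $H_{p-1}=\Hub(\nu)$.

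The one point that needs a little care, and which I expect to be the main (mild) obstacle, is bookkeeping at the subdivision points: the endpoints of an interval $J\in\mathcal I_k(\nu)$, and the preimages under $\sigma|_J$ of the subdivision points of $\mathcal I_{k+1}(\nu)$ lying inside $\sigma(J)$, are precritical points or postcritical points, and one must check that openness is not destroyed when passing between the "interval" topology on a single $J$ and the glued topology on $H_{p-1}$. This is handled exactly as in Lemma~\ref{Lem:Hn}: the relevant points are shared endpoints of adjacent intervals in the patchwork, and a set is open near such a point iff its traces on all adjacent intervals are open, so the homeomorphism property of $\sigma|_J$ combined with the one-to-one correspondence between intervals of $\mathcal I_k(\nu)$ meeting a given sub-piece and intervals of $\mathcal I_{k+1}(\nu)$ meeting its image gives compatibility at the glued points as well. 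I would also remark that in the case $\Hub(\nu)$ is a finite tree but $\nu$ is non-periodic, Lemma~\ref{Lem:top_equiv} lets one use either description of the topology, so the two arguments agree there; and that the argument is uniform in the sense that one never needs to know whether $\Hub(\nu)$ has finitely or infinitely many endpoints, only whether $\nu$ is periodic or not.
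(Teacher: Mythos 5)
Your reduction of the non-periodic case to Lemma~\ref{Lem:X_continuous} is exactly the paper's first line. But in the $\star$-periodic case your central structural claim is false, and this is not the ``mild bookkeeping'' you anticipate but the crux of the matter. You assert that $\sigma$ restricted to any $J\in\mathcal I_k(\nu)$ is an order-preserving homeomorphism, justified by ``$J$ was built as a maximal piece on which the relevant iterate is injective.'' That confuses $\sigma^k$ with $\sigma^{k+1}$: the interval $I=[\rho_i^k,\rho_{i+1}^k]\subset C_\nu$ is built so that $\sigma^k$ is injective on it, which makes $J=\sigma^k(I)$ a well-defined arc, but it says nothing about injectivity of $\sigma^{k+1}$ on $I$, i.e.\ of $\sigma$ on $J$. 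The interior of $I$ may contain a precritical point of depth exactly $k+1$; then $\star\nu$ is an interior point of $J$ and $\sigma|_J$ folds. This happens already for the airplane $\nu=\ovl{\1\0\star}$: one checks $\rho_1^2=\1\star\nu$, so $J=\sigma^2([\star\nu,\1\star\nu])=[\sigma\nu,\nu]\in\mathcal I_2$ contains $\star\nu=\sigma^2(\1\1\star\nu)$ in its interior, and $\sigma|_J$ is two-to-one near $\star\nu$. The structural fact you invoke from the proof of Lemma~\ref{Lem:Hn} is correct there only because it is applied to $J\cap S_n$, and $S_n$ avoids the critical point by the case split in that proof; it does not transfer to all of $J$.

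Because of this, the ``pull back along the homeomorphism'' step does not go through for those $J$ containing $\star\nu$ in their interior, and those are precisely the pieces where continuity is non-trivial. To repair the argument you must split such a $J$ at $\star\nu$ and then show that $\sigma^{-1}(U)\cap J$ is open near $\star\nu$ whenever $\nu\in U$. That requires knowing that $\nu$ is an endpoint of $\Hub(\nu)$ (Lemma~\ref{Lem:CriticalValueEndpoint}), so that both half-neighborhoods of $\star\nu$ in $J$ push forward into the single arc of the tree emanating from $\nu$ and hence into $U$. This is exactly the content of the second half of the paper's proof: it treats intervals not containing the critical point by the homeomorphism argument, and then separately analyzes $\sigma^{-1}(V)$ for a small neighborhood $V$ of $\nu$, observing it is a star of half-open arcs around $\star\nu$ and therefore open, explicitly flagging the forward dependence on $\nu$ being an endpoint. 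Your plan is sound and close in spirit to the paper's, but as written it silently assumes away the one case that needs work; adding the split at $\star\nu$ and the endpoint argument would bring the two proofs essentially into agreement.
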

\begin{proof}
In the non-periodic case, this is clear by Lemma~\ref{Lem:X_continuous}. 

In the $\star$-periodic case, the tree is finite and every branch point has finite valency, so it suffices to show continuity along every interval $U$ in the tree.

If $U\subset\Hub(\nu)$ is an interval that does not contain the critical point, then $\sigma\colon U\to\sigma(U)$ is an order-preserving bijection, hence a homeomorphism. Thus for any open $V\subset\Hub(\nu)$ that does not contain $\nu$, the preimage is open.

It remains to consider the case that $V\ni\nu$, and we may shrink $V$ so that it does not contain a branch point of $\Hub(\nu)$, except possibly $\nu$ itself (we will show below that $\nu$ is always an endpoint). Then $\sigma^{-1}(V)$ consists of $\star\nu$ together with finitely many open intervals that terminate at $\star\nu$, and every edge in $\Hub(\nu)$ that ends at $\star\nu$ intersects $\sigma^{-1}(V)$ in such an open interval. Therefore, $\star\nu$ is an interior point of $\sigma^{-1}(V)$.
\end{proof}

\Newpage

\begin{theorem}[The Hubbard Tree]
\label{Thm:Hubbard_tree}
The Hubbard tree $\Hub(\nu)$ is a tree and has the following properties:
\begin{enumerate}
\item 
\label{Item:HubEndpoints}
All endpoints of the tree are postcritical points, i.e.\ one of the points $\sigma^k(\nu)$ for $k\ge 0$. There are two possibilities:
\begin{itemize}
\item
the tree has finitely many endpoints; if $\kappa$ is this number of endpoints, then the endpoints are exactly the points $\nu$, $\sigma\nu$, $\sigma^2\nu$, \dots, $\sigma^{\kappa-1}\nu$, and no others. In this case, the tree is compact.
\item
the tree has infinitely many endpoints, and these are exactly the points $\sigma^k\nu$ for $k\ge 0$. 
\end{itemize}
In particular, $\nu$ is always an endpoint. 

\item 
\label{Item:HubSigma}
$\Hub(\nu)$ is $\sigma$-invariant, and $\sigma$ is injective on every connected subset that does not contain the critical point $\star\nu$.
\item 
\label{Item:Diff}
If $a,b\in\Hub(\nu)\cap X_\nu$ and  $c \in [a,b]\cap X_\nu$, then $\diff(a,c)\ge \diff(a,b)$.
\item 
\label{Item:PrecDenseHub}
For each itinerary $a\in \Hub(\nu)$, and every component $K$ of $\Hub(\nu)\setminus \{a\}$ that does not contain a Fatou interval with $a$ on the boundary, there exist precritical points $a_n \in K$ converging to $a$. In particular, precritical points are dense in $\Hub(\nu)$ if $\nu$ is non-periodic. \end{enumerate}
\end{theorem}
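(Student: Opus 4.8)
The plan is to prove the four assertions largely in the order listed, leaning heavily on the structural facts already established for the finite trees $H_n$ in Lemma~\ref{Lem:Hn}, on the continuity statements in Lemmas~\ref{Lem:Hcont}, \ref{Lem:top_equiv}, and on the identification of topology in the non-periodic case with the subspace topology of $X_\nu$. First I would record that $\Hub(\nu)=\bigcup_n H_n$ is an increasing union of finite trees in which, by the discussion after Lemma~\ref{Lem:Hn}, each $H_{n+1}$ is obtained from $H_n$ by attaching at most one new edge $[b,\sigma^{n+1}\nu]$; hence $\Hub(\nu)$ is a countable union of paths, and uniqueness of arcs between any two points follows because any two points already lie in some common $H_n$, which is a tree. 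That gives the ``tree'' assertion and simultaneously sets up the bookkeeping for endpoints.

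For part \eqref{Item:HubEndpoints}: in $H_n$ the endpoints are contained in $\{\star\nu,\nu,\dots,\sigma^n\nu\}$ by Lemma~\ref{Lem:Hn}\eqref{it:endp}, and $\star\nu$ is never an endpoint of $H_{n+1}$ once $n$ is large enough (it has at least two edges — I would cite the branching produced when $\star\nu\in S_n$, or simply observe $\sigma^{-1}$ of a neighborhood of $\nu$ contains several edges at $\star\nu$, as in the proof of Lemma~\ref{Lem:Hcont}). An endpoint of $\Hub(\nu)$ must be an endpoint of every $H_n$ containing it (attaching edges elsewhere cannot create non-endpoints out of endpoints, but could destroy endpoint-ness), so endpoints of $\Hub(\nu)$ are among the $\sigma^k\nu$. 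Conversely I must show each $\sigma^k\nu$ that never becomes an interior point is genuinely an endpoint of the limit; this is where I would use Lemma~\ref{Lem:Hn}\eqref{it:dense} / Theorem part \eqref{Item:PrecDenseHub} together with Lemma~\ref{Lem:Hn}\eqref{it:inj}: a neighborhood of $\sigma^k\nu$ inside one edge is connected and the itineraries there eventually agree on an arbitrarily long prefix, so no new branch can accumulate at $\sigma^k\nu$ unless some $\sigma^j\nu$ with $j>k$ lands there, i.e.\ $\sigma^k\nu$ is already forced to be interior in some $H_n$. The dichotomy (finitely many endpoints $\Leftrightarrow$ the first $\kappa$ of the $\sigma^k\nu$ are endpoints and the construction stabilizes, in which case $\Hub(\nu)=H_{\kappa-1}$ is a finite tree, hence compact; otherwise infinitely many, all of the form $\sigma^k\nu$) then follows from monotonicity of the $H_n$. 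That $\nu$ itself is always an endpoint: it is an endpoint of $C_\nu=H_0$ with only one adjacent edge, and it is never a postcritical point $\sigma^j\nu$ with $j\ge1$ since its unique preimage is $\star\nu\notin A_d^\infty$ except in degenerate $\star$-periodic cases — here I'd need the small argument that the edge attached at step $n+1$ never attaches at $\nu$, because $\sigma^{n+1}\nu\ne\nu$ and $b\ne\nu$ by the construction of $S_{n+1}$.

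Parts \eqref{Item:HubSigma} and \eqref{Item:Diff} are comparatively quick. For \eqref{Item:HubSigma}, $\sigma$-invariance is immediate from $\sigma(\sigma^k C_\nu)=\sigma^{k+1}C_\nu\subset\Hub(\nu)$, and injectivity of $\sigma$ on a connected set $K$ avoiding $\star\nu$ follows because $K$ lies in some $H_n$ and by Lemma~\ref{Lem:Hn}\eqref{it:inj} all itineraries in $K$ begin with the same symbol in $A_d$, so the shift deletes a common first symbol injectively (on Fatou intervals $\sigma$ was defined as a homeomorphism, consistent with this). For \eqref{Item:Diff}, given $a,b\in\Hub(\nu)\cap X_\nu$ and $c\in[a,b]$, the whole arc $[a,b]$ lies in some $H_n$, and there the inequality $\diff(a,c)\ge\diff(a,b)$ holds: it was shown on the critical path in the proof of Lemma~\ref{Lem:path_top}, extends through each injective shifted copy $\sigma^k([\star\nu,\rho_i])$ (the shift only adds a common prefix), and is preserved under the quotient identifications since those only glue points with equal itineraries; I would phrase this as an induction over $n$ using the attach-one-edge structure, the inductive step reducing to the case that $c$ lies on the new branch $B$, whose itineraries share a long prefix with $\sigma^{n+1}\nu$.

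Finally, part \eqref{Item:PrecDenseHub} is essentially the passage to the limit of Lemma~\ref{Lem:Hn}\eqref{it:dense}. Given $a\in\Hub(\nu)$ and a component $K$ of $\Hub(\nu)\setminus\{a\}$ with no Fatou interval abutting $a$ in $K$, pick $n$ large enough that $a\in H_n$ and that $K\cap H_n$ is a nonempty component of $H_n\setminus\{a\}$; applying Lemma~\ref{Lem:Hn}\eqref{it:dense} inside $H_n$ produces precritical points in $K$ arbitrarily close to $a$ — ``close'' in the topology of $H_n$, which by Lemma~\ref{Lem:top_equiv} (when $\Hub(\nu)=H_n$) or, in the non-periodic case, by the agreement of the $H_n$-topology with the subspace topology of $X_\nu$ on the relevant edge, is the topology on $\Hub(\nu)$. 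One must check the Fatou-interval caveat is inherited correctly: if no Fatou interval sits at $a$ in $K$ globally, the same is true in $K\cap H_n$. Taking $n\to\infty$ and diagonalizing yields precritical points $a_n\in K$ with $\diff(a_n,a)\to\infty$, hence $a_n\to a$; and when $\nu$ is non-periodic there are no Fatou intervals at all, so precritical points are dense in $\Hub(\nu)$.

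I expect the main obstacle to be the honest verification in part \eqref{Item:HubEndpoints} that a point $\sigma^k\nu$ which stays an endpoint of every $H_n$ really remains an endpoint of the infinite union — i.e.\ controlling the limit topology at such points so that no sequence of newly attached edges accumulates there. Handling this requires combining the prefix-sharing from Lemma~\ref{Lem:Hn}\eqref{it:inj}, the density of precritical points from \eqref{it:dense}, and, in the non-periodic case, the explicit basis $U_k(a)$ of $X_\nu$ to see that a full $X_\nu$-neighborhood of $\sigma^k\nu$ meets $\Hub(\nu)$ in a single half-open arc; the $\star$-periodic case is easier since the tree is then finite.
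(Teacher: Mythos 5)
There is a genuine gap at the very start, in your claim that ``uniqueness of arcs between any two points follows because any two points already lie in some common $H_n$, which is a tree.'' That observation gives you a unique arc \emph{inside} $H_n$, but it does not rule out the existence of a second arc in $\Hub(\nu)$ that is not contained in any single $H_n$ --- a real possibility, since for non-periodic $\nu$ the topology on $\Hub(\nu)$ is the subspace topology of $X_\nu$ (not a colimit/weak topology), and infinitely many newly attached edges could in principle accumulate in a way that produces a ``limit arc'' visiting infinitely many of the $H_n\setminus H_{n-1}$. The paper treats this carefully: it proves the $\diff$-monotonicity statement (Item~\ref{Item:Diff}) \emph{first}, using the finite-tree structure, and then argues by contradiction that a path $J\subset\Hub(\nu)$ not contained in any $H_n$ would force a sequence $a_n\in H_n\setminus H_{n-1}$ with $a_n\to a$ and $a_n\in[a,a_{n+1}]$, so that $\diff(a,a_n)$ is non-increasing while $a_n\to a$ demands $\diff(a,a_n)\to\infty$. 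This contradiction is exactly the content missing from your proposal; your later induction toward (Item~\ref{Item:Diff}) cannot rescue it because you invoked unique arcwise connectedness before establishing (Item~\ref{Item:Diff}), whereas the paper needs the implication to run the other way.

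A second, smaller issue: you flag ``here I'd need the small argument that the edge attached at step $n+1$ never attaches at $\nu$, because $\sigma^{n+1}\nu\ne\nu$ and $b\ne\nu$ by the construction of $S_{n+1}$.'' That is not a small argument and it does not drop out of the construction of $S_{n+1}$. The paper explicitly postpones the claim that $\nu$ is always an endpoint to Lemma~\ref{Lem:CriticalValueEndpoint}, whose proof is a non-trivial induction using local arms, the map of local arms (Lemma~\ref{Lem:local_arm}), and a minimal-depth count of precritical points in global arms. So that part of (Item~\ref{Item:HubEndpoints}) cannot be handled inline as you suggest. Your discussion of why an orbit point $\sigma^k\nu$ that remains an endpoint of all $H_n$ stays an endpoint in the limit is actually the less problematic direction (an increasing union of connected sets is connected, so $\Hub(\nu)\setminus\{\sigma^k\nu\}$ stays connected), and the worry you emphasize at the end is somewhat misplaced relative to the two genuine holes above. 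The remaining parts (\ref{Item:HubSigma}), (\ref{Item:Diff}), (\ref{Item:PrecDenseHub}) follow essentially the paper's route.
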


\begin{proof}
In the finite case, by Lemma~\ref{Lem:top_equiv}, the two introduced topologies for finite trees are equivalent, so we can use Lemma~\ref{Lem:Hn} to show most of the listed properties. 

If $\Hub(\nu)$ has finitely many endpoints, then it is the finite union of paths between endpoints, which are compact, so $\Hub(\nu)$ is compact itself.

If $\diff(a,b)=k$ for $a,b \in \Hub(\nu)$, this means precisely that the depth of the first precritical point in $[a,b]$ is $k$ and thus the path $[a,b]$ can be iterated injectively exactly $k-1$ times (this is the motivation behind Definition~\ref{Def:PrecPointsCritPath}). This implies $\diff(a,c)\ge \diff(a,b)$ if $c \in [a,b]$ and thus claim \eqref{Item:Diff}.

Now let us consider the case that $\Hub(\nu)$ has infinitely many endpoints. 
Pick any $a,b \in \Hub(\nu)$. Then there is an $N$ so that $a,b\in H_n$ for all $n\ge N$, and there is a unique path $[a,b]\subset H_n$ that connects these points, so $\Hub(\nu)$ is path connected. 

To see that it is uniquely path connected, suppose there is a path, say $J$,  that connects $a,b$ within $\Hub(\nu)$ but not in any finite $H_n$. In this proof, paths like $[a,b]$ are always understood with respect to finite trees $H_n$; only $J$ is in $\Hub(\nu)$ but not in any $H_n$.

There must be a sequence of points $a_n\in H_n\sm H_{n-1}$ with $a_n\to a\in\Hub(\nu)$ such that $a_{n+1}\not\in[a_n,a]$ for all $n$ (the path from $a$ to $a_n$ within $H_n$ connects ``the long way'', avoiding the point $a_{n+1}$ close to $a$). We can require that the $a_n$ are linearly ordered so that each $a_n\in[a_{n-1},a_{n+1}]$ and thus $a_n\in[a,a_{n+1}]\in H_{n+1}$. Now $a_n\to a$ requires that $\diff(a_n,a)\to\infty$, while $a_n\in[a,a_{n+1}]$ implies that the sequence $\diff(a,a_n)$ cannot increase; see \eqref{Item:Diff}. This is a contradiction, so $\Hub(\nu)$ is indeed a tree.

The Hubbard tree is $\sigma$-invariant by construction. Suppose there are two distinct points $a$ and $b$ in the same connected subset of $\Hub(\nu)\setminus \{\star\}$ and so that $a$ and $b$ are mapped to the same image under $\sigma$. Since there exists some $n$ with $[a,b]\subset H_n$, this  contradicts Lemma~\ref{Lem:Hn}~\eqref{it:inj}. This shows \eqref{Item:HubSigma}.

For \eqref{Item:PrecDenseHub}, observe first that Fatou intervals do not occur for infinite trees, while the claim has been shown for finite trees. 
Density of precritical points translates to the infinite case. 

We postpone the proof that the critical value is always an endpoint of the Hubbard tree to Lemma~\ref{Lem:CriticalValueEndpoint}. By local injectivity, the endpoints are then precisely the orbit points $\nu, \sigma\nu, \dots, $; when one of them is no longer an endpoint, then no further orbit points of $\nu$ are endpoints until the critical point is reached, if ever. This proves \eqref{Item:HubEndpoints} except for the missing lemma.
\end{proof}

\begin{figure}[htbp]
\includegraphics[width=\textwidth]{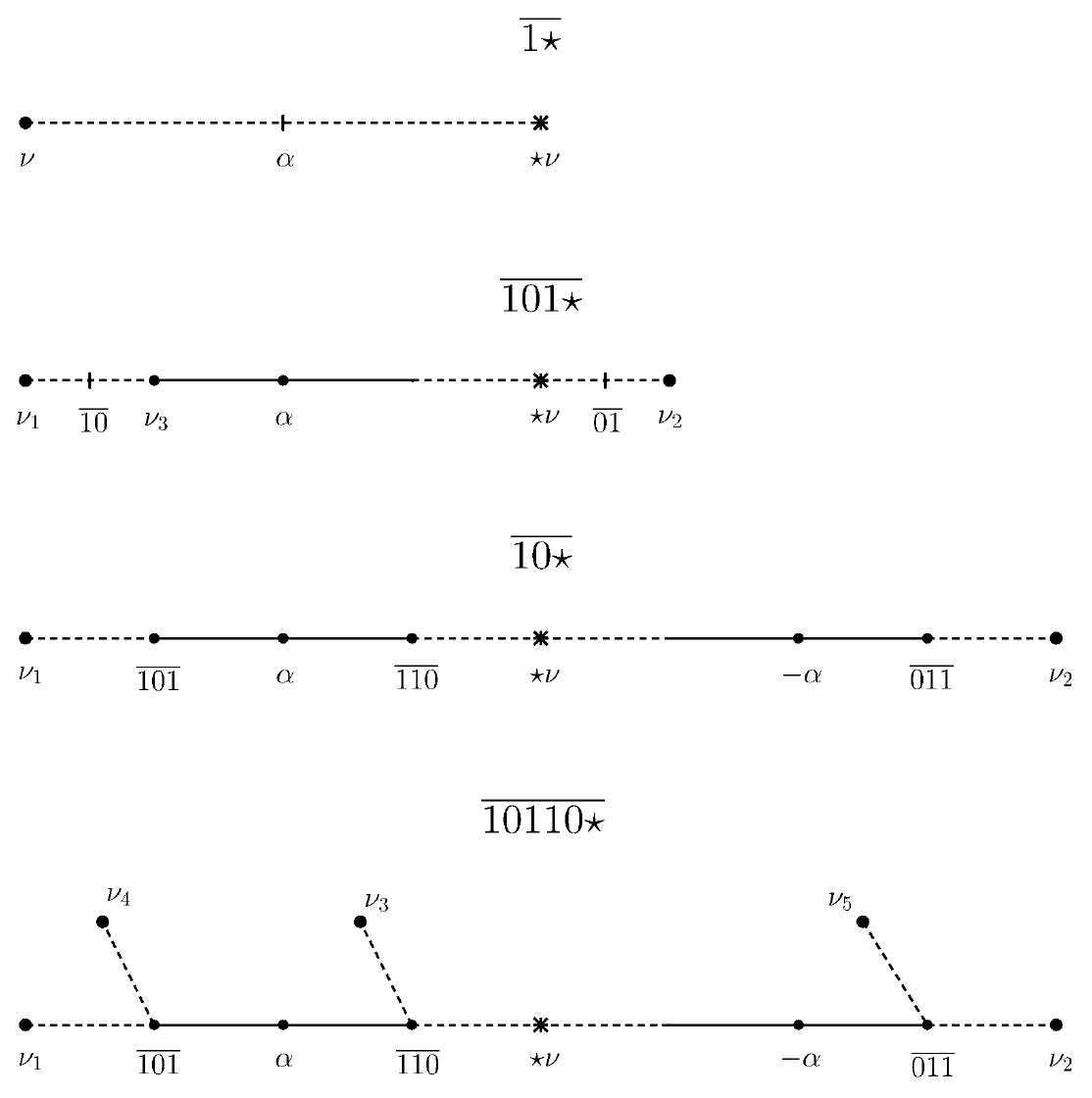}
\caption{The Hubbard trees of the examples considered in Figure~\ref{Fig:crit_paths}. Dotted lines represent major Fatou intervals, vertical bars the orbits of a central itinerary and $-\alpha=\0\overline{\1}$.}
\label{Fig:h_trees}
\end{figure}

\bigskip

\goodbreak

\subsection{The structure of Hubbard trees}

In order to investigate this structure, we need to introduce the concept of global and local arms. 

\begin{definition}[Global and local arms]
Let $\tau$ be a point in $\Hub(\nu)$. The \emph{global arms at $\tau$} are the components of  $\Hub(\nu)\setminus \{\tau\}$; denote them by $G_1(\tau),G_2(\tau),...$ (their number is possibly infinite). These are also called the \emph{branches} of $\Hub(\nu)$ at $\tau$. 

For every global arm $G_i(\tau)$ we define the associated \emph{local arm} $L_i(\tau)$ as an equivalence class of connected open neighborhoods of $\tau$ that do not contain $\star\nu$, restricted to $G_i(\tau)$. We say that the $\sigma$-image of a local arm $L_i(\tau)$ is the local arm $L_j(\tau)$ if $\sigma(U)\subseteq G_j(\tau)$ for all $U\in L_i(\tau)$.  
\end{definition}

Intuitively, a local arm is a ``tangent vector'' at $\tau$ within the associated global arm. We say $L_i(\tau)$ \emph{points towards $\rho$} if $\rho \in G_i(\tau)$. 

Each local arm $L_i(\tau)$ always contains a unique maximal element. If $G_i(\tau)$ does not contain the critical point, this element is simply $G_i(\tau)$. For the unique global arm $G_i(\tau)$ that contains the critical point, the maximal element is the connected component of $G_i(\tau)\setminus \{\star\nu\}$ with boundary point $\tau$. We denote this maximal element by $G_i^1(\tau)$. Note that it is then sufficient to look at the image of the maximal element to determine the image of a local arm.

In the following lemma one should keep in mind that a priori the number of local or global arms at some point $\tau$ could be infinite.

\begin{lemma}[Image of local arms]
\label{Lem:local_arm}
If $\tau$ is not the critical point, then the map $\sigma$ is an injection from the set of local arms of $\tau$ to the set of local arms at $\sigma(\tau)$. 
\end{lemma}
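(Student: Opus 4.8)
The statement is that $\sigma$ maps local arms at $\tau$ injectively to local arms at $\sigma(\tau)$, assuming $\tau \neq \star\nu$. The plan is to first check that $\sigma$ is \emph{well-defined} on local arms, then that it is \emph{injective}. For well-definedness, I would take a local arm $L_i(\tau)$ with maximal representative $G_i^1(\tau)$ (which is $G_i(\tau)$ itself unless $G_i(\tau)$ contains $\star\nu$, in which case it is the component of $G_i(\tau)\setminus\{\star\nu\}$ with $\tau$ on its boundary). Since $G_i^1(\tau)$ is a connected subset of $\Hub(\nu)\setminus\{\star\nu\}$ with $\tau$ on its boundary but (in its interior) not containing $\star\nu$, by Theorem~\ref{Thm:Hubbard_tree}\eqref{Item:HubSigma} the map $\sigma$ is injective on $\overline{G_i^1(\tau)} = G_i^1(\tau)\cup\{\tau\}$ and hence a homeomorphism onto its image, which is a connected subset of $\Hub(\nu)$ with $\sigma(\tau)$ on its boundary. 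Therefore $\sigma(G_i^1(\tau))\setminus\{\sigma(\tau)\}$ lies in a single component of $\Hub(\nu)\setminus\{\sigma(\tau)\}$, call it $G_j(\sigma(\tau))$, and every sufficiently small connected neighborhood $U$ of $\tau$ inside $G_i^1(\tau)$ maps into $G_j(\sigma(\tau))$; this shows $\sigma(L_i(\tau))$ is a well-defined local arm $L_j(\sigma(\tau))$.

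\textbf{Injectivity.} Suppose two distinct local arms $L_i(\tau)$ and $L_{i'}(\tau)$ map to the same local arm at $\sigma(\tau)$. Pick points $x \in G_i^1(\tau)$ and $x' \in G_{i'}^1(\tau)$ very close to $\tau$ (so close that neither $[\tau,x]$ nor $[\tau,x']$ contains $\star\nu$ — possible since $\tau\neq\star\nu$ and $\tau$ has a neighborhood avoiding $\star\nu$). Then $[x,x']$ passes through $\tau$, so $\tau$ is an interior point of the arc $[x,x']$, and this whole arc avoids $\star\nu$. By Theorem~\ref{Thm:Hubbard_tree}\eqref{Item:HubSigma}, $\sigma$ is injective on $[x,x']$, hence $\sigma(\tau)$ is an interior point of $[\sigma(x),\sigma(x')]$, which forces $\sigma(x)$ and $\sigma(x')$ to lie in \emph{different} components of $\Hub(\nu)\setminus\{\sigma(\tau)\}$. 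But $x$ (resp.\ $x'$), being in the maximal representative of $L_i(\tau)$ (resp.\ $L_{i'}(\tau)$) and close to $\tau$, lies in a neighborhood representing that local arm, so $\sigma(x)\in G_j(\sigma(\tau))$ and $\sigma(x')\in G_{j'}(\sigma(\tau))$ where $G_j$, $G_{j'}$ are the global arms corresponding to the images; since the images agree, $j=j'$, contradicting that $\sigma(x),\sigma(x')$ are in different components. Hence the two local arms had the same image only if they were equal.

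\textbf{Main obstacle.} The delicate point is making sure one may always find, near $\tau$, a connected neighborhood (or an arc from $\tau$) that both represents the given local arm \emph{and} avoids $\star\nu$; this is exactly why the hypothesis $\tau\neq\star\nu$ is needed, and it uses that $\Hub(\nu)$ is a tree (Theorem~\ref{Thm:Hubbard_tree}) so that $\tau$ has arbitrarily small connected neighborhoods, together with Lemma~\ref{Lem:Hn}\eqref{it:dense}/Theorem~\ref{Thm:Hubbard_tree}\eqref{Item:PrecDenseHub} and the fact that $\star\nu\neq\tau$ to separate $\star\nu$ from $\tau$. One subtlety worth a sentence: when $G_i(\tau)$ does contain $\star\nu$, one must argue on the maximal representative $G_i^1(\tau)$ rather than on $G_i(\tau)$ itself, and note (as the paper already remarks) that the image of a local arm is determined by the image of this maximal representative, so no information is lost. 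The rest is a routine application of the injectivity of $\sigma$ on $\star\nu$-free connected subsets established in Theorem~\ref{Thm:Hubbard_tree}\eqref{Item:HubSigma}.
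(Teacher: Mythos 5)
Your proposal is correct and takes essentially the same route as the paper's proof: both arguments hinge on the observation that a connected set containing $\tau$ as an interior point and spanning the two arm representatives (for you, the arc $[x,x']$; for the paper, the union $G_{i_1}^1(\tau)\cup\{\tau\}\cup G_{i_2}^1(\tau)$) avoids $\star\nu$ and is therefore mapped injectively by Theorem~\ref{Thm:Hubbard_tree}\eqref{Item:HubSigma}, making $\sigma(\tau)$ an interior point of the image, which is incompatible with both images landing in a single global arm $G_j(\sigma(\tau))$. Your added well-definedness paragraph is a reasonable supplement that the paper leaves implicit in its definition of the image of a local arm.
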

\begin{proof}
Suppose two local arms $L_{i_1}(\tau)$ and $L_{i_2}(\tau)$ both map to $L_j(\sigma(\tau))$ under $\sigma$, then $G_{i_1}^1(\tau)$ and $G_{i_2}^1(\tau)$ both map to  $G_{j}(\sigma(\tau))$ under $\sigma$. But since $\tau$ is not the critical point, $G_{i_1}^1(\tau)\cup \{\tau\}\cup G_{i_2}^1(\tau)$ is injectively mapped to a subtree of $\Hub(\nu)$ where $\sigma(\tau)$ is not an endpoint of the subtree. So this set can not be contained in $G_{j}(\sigma(\tau))$ and we obtain a contradiction.
\end{proof}

\begin{lemma}[Critical value endpoint]
\label{Lem:CriticalValueEndpoint}
The critical value is always an endpoint of the Hubbard tree.
\end{lemma}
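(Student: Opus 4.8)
The plan is to argue by contradiction: suppose the critical value $\nu$ is \emph{not} an endpoint of $\Hub(\nu)$. Then $\Hub(\nu)\sm\{\nu\}$ is disconnected, so there are at least two global arms $G_1(\nu), G_2(\nu)$ at $\nu$, hence at least two local arms $L_1(\nu), L_2(\nu)$. The first step is to pull these back to the critical point: since $\sigma(\star\nu)=\nu$ and $\sigma$ is a local homeomorphism near $\star\nu$ on each branch (the itinerary just loses its leading $\star$), every local arm at $\star\nu$ maps to a local arm at $\nu$, and conversely each local arm at $\nu$ is hit. In particular $\star\nu$ also has at least two local arms, one mapping onto $L_1(\nu)$ and one onto $L_2(\nu)$.

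The core of the argument is then a descent along the critical orbit using Lemma~\ref{Lem:local_arm}. Consider a local arm $L$ at $\nu$. Its image $\sigma(L)$ is a local arm at $\sigma(\nu)=\nu_1$; iterating, $\sigma^k(L)$ is a well-defined local arm at $\nu_k=\sigma^k(\nu)$, \emph{as long as} none of the intermediate orbit points $\nu,\nu_1,\dots,\nu_{k-1}$ equals the critical point $\star\nu$ — but $\star\nu\notin X_\nu$-orbit of $\nu$ here since $\sigma(\star\nu)=\nu$ and the orbit of $\nu$ never returns to $\star\nu$ (the only $\sigma$-preimage of $\nu$ is $\star\nu$, which is not in the forward orbit of $\nu$). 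So in fact \emph{every} forward iterate of a local arm at $\nu$ is defined, and by Lemma~\ref{Lem:local_arm} the map $\sigma$ on local arms is injective at each step along the orbit. Now I would count: at $\star\nu$ there are (at least) two local arms, but $\star\nu$ has a \emph{single} local arm pointing towards $\nu$ up the critical path — wait, that is not immediate. Instead the right move is: look at where $L_1(\nu)$ and $L_2(\nu)$ point. The critical path $C_\nu$ enters $\nu$ from exactly one side (it is an interval with endpoint $\nu$), giving one distinguished local arm at $\nu$, call it $L_C$. Any \emph{other} local arm $L'$ at $\nu$ lies in a branch that was attached at some finite stage $H_{n}$, i.e. as the image $\sigma^{n}(B_C)$ of a subinterval $B_C$ of $C_\nu$ not containing $\nu$ in its interior; but then $\nu=\sigma^n(\text{something interior to }B_C)$ forces a precritical point of low depth inside $B_C$, and tracking this shows that $\nu$ being attained as a non-endpoint inside such a branch contradicts Lemma~\ref{Lem:Hn}\eqref{it:endp}, which says all endpoints of each $H_n$ lie in $\{\star\nu,\nu,\dots,\sigma^n\nu\}$ and, crucially, the construction in Lemma~\ref{Lem:Hn} attaches each new branch at an \emph{interior} point of the existing tree with the new endpoint being $\sigma^{n+1}\nu$ — never creating a new branch point at $\nu$ itself unless $\nu=\sigma^{n+1}\nu$, which is excluded for $\nu$ not $\star$-periodic, and for $\star$-periodic $\nu$ the tree is $H_{p-1}$ and $\nu=\sigma^0\nu$ is never a branch point in that construction either.

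Concretely, here is the cleaner route I would write up. By Lemma~\ref{Lem:Hn}\eqref{it:endp}, in each finite tree $H_n$ the point $\nu$ is an endpoint for $n=0$; I claim it stays an endpoint for all $n$. The inductive step from $H_n$ to $H_{n+1}$ attaches a single new branch $[b,\sigma^{n+1}\nu]$ to $H_n$ at the point $b$, and $\sigma^{n+1}\nu$ is its free endpoint. This operation can only fail to preserve ``$\nu$ is an endpoint'' if $b=\nu$, i.e. if $\nu$ is the attaching point of the new branch. But $b=\sup\{x\in S_{n+1}: x\in H_n\}$ where $S_{n+1}=[\sigma^n\nu,\sigma^{n+1}\nu]=\sigma(S_n)$; if $b=\nu$ then $\nu\in S_{n+1}=\sigma(S_n)$, so some point of $S_n=[\sigma^{n-1}\nu,\sigma^n\nu]$ maps to $\nu$ under $\sigma$, and the only $\sigma$-preimage of $\nu$ anywhere in $\Hub(\nu)$ is $\star\nu$ (this is exactly how $X_\nu$ was built — we excluded sequences $w\nu$ with $w$ nonempty). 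Hence $\star\nu\in S_n\subset H_n$, and $\nu=\sigma(\star\nu)$ sits at the image of a point which is \emph{interior} to $[\sigma^{n-1}\nu,\sigma^n\nu]$ only if $\star\nu$ is interior there; but then by Lemma~\ref{Lem:Hn}\eqref{it:inj} applied around $\star\nu$ one sees $\sigma$ folds the two sides of $\star\nu$ together, so $\nu$ is the image of an endpoint-side, i.e. $\nu$ remains an endpoint of the attached branch. Thus in every case $\nu$ is an endpoint of $H_{n+1}$, so by induction $\nu$ is an endpoint of every $H_n$, hence of $\Hub(\nu)=\bigcup_n H_n$ (a point that is an endpoint of each $H_n$ containing a neighborhood basis of it is an endpoint of the union), completing the proof.

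\textbf{Main obstacle.} The delicate point is the case $b=\nu$ in the inductive step, i.e.\ ruling out that a newly attached branch glues onto the tree precisely at the critical value. This requires using the defining feature of $X_\nu$ — that $\star\nu$ is the \emph{only} $\sigma$-preimage of $\nu$ — together with local injectivity of $\sigma$ off the critical point (Lemma~\ref{Lem:Hn}\eqref{it:inj}) to see that even when $\star\nu$ lies in the relevant subpath, the fold at $\star\nu$ keeps $\nu$ on the boundary rather than in the interior. I expect the write-up of this case to be where care is needed; everything else is bookkeeping with the already-established structure of the $H_n$.
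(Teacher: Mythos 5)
Your reduction is sound up to a point: $\nu$ is an endpoint of $H_0$; being an endpoint of every $H_n$ does pass to the union $\Hub(\nu)$; and the only way the inductive step $H_n\to H_{n+1}$ can destroy this is if the newly attached branch glues on at $b=\nu$, which (since $\star\nu$ is the unique $\sigma$-preimage of $\nu$) forces $\star\nu\in S_n$. But your resolution of that case is a genuine gap, and it is exactly where the whole difficulty of the lemma sits. When $\star\nu\in S_n$, the piece added to $H_n$ is the path $\sigma([\star\nu,\sigma^n\nu])$ emanating from $\nu$, and the question is whether an initial segment of it runs along the one existing arm of $H_n$ at $\nu$ (so that $b\succ\nu$ and $\nu$ stays an endpoint) or leaves $H_n$ immediately at $\nu$ (so that $\nu$ acquires a second arm). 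Your appeal to Lemma~\ref{Lem:Hn}\eqref{it:inj} --- ``the fold at $\star\nu$ keeps $\nu$ on the boundary'' --- does not decide this: that item only gives injectivity of $\sigma$ on each side of $\star\nu$ separately, and says nothing about whether the image of the arm containing $\sigma^n\nu$ coincides near $\nu$ with the image of the critical-path arm or diverges from it at $\nu$. The claim that every local arm at $\star\nu$ maps into the single local arm at $\nu$ \emph{is} the statement ``$\nu$ is an endpoint'' in local form, so the argument is circular at the decisive moment; there is no local reason for such folding (in the model $z\mapsto z^d$ a full neighborhood of the critical point covers a full neighborhood of the critical value).

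The paper's proof is correspondingly global. For finite trees it uses a counting argument: away from $\star\nu$, non-endpoints map to non-endpoints, all endpoints are postcritical, and a finite tree must have endpoints. For infinite trees --- where the possibility you flag, that $\nu$ becomes interior only ``in the limit'', is precisely the danger --- it assumes $\nu\in[\star\nu,\sigma^n\nu]$ and derives a contradiction by tracking, for each $k$, the minimal depth $s_k$ of precritical points in the components $S_k$ of $\Hub(\nu)\sm\{\sigma^k\nu\}$ on the far side of $\sigma^k\nu$, using the injectivity of the induced map on local arms (Lemma~\ref{Lem:local_arm}) and density of precritical points to show that always $s_k<s_0$, or $s_k=s_0$ and $S_0\subseteq S_k$; this is incompatible with $\nu$ lying in the interior of $[\star\nu,\sigma^n\nu]$. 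An argument of this global nature is needed to close your case $b=\nu$; as written, the proposal does not prove the lemma.
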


\begin{proof}
If the Hubbard tree has finitely many endpoints, this result is well known: if $x$ is not an endpoint, then $\sigma x$ cannot be an endpoint unless $x$ is the critical point. Therefore, if the critical value is not an endpoint, then no postcritical point can be an endpoint. But the Hubbard tree is the finite union of intervals that are bounded by postcritical points, a contradiction. 

The argument is more subtle for a kneading sequence $\nu$ such that its Hubbard tree $\Hub(\nu)$ is infinite. In this case, it is a priori conceivable that the critical value, and eventually all postcritical points, become interior points as further edges of the tree are added in the infinite union $\Hub(\nu)=\bigcup_n\sigma^n[\star\nu,\nu]$. 

Suppose $\nu$ is not an endpoint. Then there must be a $n\ge 1$ such that $\nu\in[\star\nu,\sigma^n\nu]$.

For arbitrary $k\ge 0$, denote by $T_k$ the global arm of $\Hub(\nu)\sm\{\sigma^k\nu\}$ that contains the critical point and let $T_k^1$ be the component of $T_k \setminus \{\star\nu\}$ whose boundary contains $\sigma^k \nu$. 
Let $S_k:=\Hub(\nu)\sm (T_k\cup\{\sigma^k\nu\})$; the set $S_k$ could possibly consist of several components. 
Denote by $s_k$ the minimal depth of precritical points in $S_k$ (not counting the critical value). Since precritical points are dense in infinite trees, $s_k \ge 2$ is always well-defined.

Our claim is that for all $k\ge 0$, we have $s_k < s_0$ or $s_k=s_0$ and $S_0 \subseteq S_k$. Then it follows that $\nu$ cannot lie in the interior of the path $[\star\nu,\sigma^k\nu]$ and we are done. 

We prove the claim by induction on $k$; the case $k=0$ is clear. So suppose the statement holds for some $k$. By Lemma~\ref{Lem:local_arm}, at most one local arm at $\sigma^k \nu$ can map to the local arm associated to $T_{k+1}$. Depending on whether this local arm is or is not associated to $T_k$, we either have  $\sigma(S_k)\subset S_{k+1}$ or  $\sigma(T_k^1) \subset S_{k+1}$. In the former case, we have $s_{k+1} < s_k$, which completes the inductive claim. In the latter case, $S_{k+1}$ contains $(\sigma^{k+1}\nu,\nu]$, but not the critical point, so $\sigma^{k+1}\nu $ must lie on the critical path and hence $S_0 \subsetneq S_{k+1}$ and $s_{k+1}\le s_0$. 
\end{proof}

\Newpage

\begin{theorem}[No Wandering Branch Points]
\label{Thm:NoWanderingBranchPoint}
Every branch point of $\Hub(\nu)$ is always periodic or preperiodic, except if it is a precritical point (only if $d>2$).
\end{theorem}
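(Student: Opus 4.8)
The plan is to use the local-arm machinery from Lemma~\ref{Lem:local_arm} together with the ``expansion'' principle encoded in property~\eqref{Item:Diff} of Theorem~\ref{Thm:Hubbard_tree}: the path between two points can be iterated injectively a number of times governed by the depth of the first precritical point on it. Fix a branch point $\tau$ that is not a precritical point. Since $\tau$ is not the critical point and not precritical, the point $\sigma(\tau)$ is again not the critical point (the only preimage of $\nu$ is $\star\nu$, which has depth~$1$). Hence by Lemma~\ref{Lem:local_arm}, $\sigma$ induces an \emph{injection} from local arms at $\tau$ to local arms at $\sigma(\tau)$, and therefore from local arms at $\sigma^k(\tau)$ to local arms at $\sigma^{k+1}(\tau)$ for every $k$, as long as none of the points $\tau,\sigma(\tau),\dots,\sigma^{k}(\tau)$ is the critical point. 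If the orbit of $\tau$ ever hits the critical point, then $\tau$ is a precritical point, contrary to assumption; so the orbit of $\tau$ avoids $\star\nu$ entirely, and we get an injection on local arms at each stage of the forward orbit.

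Next I would show that $\sigma^k(\tau)$ is a branch point for every $k\ge 0$, i.e.\ the valence does not drop along the orbit. Since $\tau$ has valence $\ge 3$, pick three local arms $L_1,L_2,L_3$ at $\tau$; by the injection above, their images are three distinct local arms at $\sigma(\tau)$, so $\sigma(\tau)$ has valence $\ge 3$, and inductively each $\sigma^k(\tau)$ has valence $\ge 3$. In fact I want a uniform finite bound on the valence of all $\sigma^k(\tau)$. For $d=2$ every branch point of a tree built from intervals has valence exactly $3$ away from the critical point (the critical point is excluded from the orbit here), so the valence is constantly $3$ and the argument is cleanest; for $d>2$ the branch point could have higher valence, but the key point is that $\sigma^k(\tau)$ cannot be a precritical point either (again its orbit would then hit $\star\nu$), so by the injectivity of $\sigma$ on local arms the valence is nondecreasing along the orbit; combined with the fact that $\Hub(\nu)$ has only finitely many branch points of any given valence whose orbit avoids precritical points — here I would invoke that each such branch point, being a common endpoint of paths between postcritical points, and the uniform structure of the tree — the orbit $\{\sigma^k(\tau)\}$ lands in a finite set of branch points and hence is preperiodic.

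The cleanest way to get finiteness, and the step I expect to be the main obstacle, is this: I want to bound the number of branch points of $\Hub(\nu)$ that are \emph{not} precritical and whose entire forward orbit avoids $\star\nu$. The idea is that if $\tau$ is such a branch point, then by property~\eqref{Item:Diff} any two local arms at $\tau$ that point toward postcritical points $\sigma^i\nu,\sigma^j\nu$ determine a path $[\sigma^i\nu,\sigma^j\nu]$ through $\tau$; since $\tau$ is not precritical and its orbit avoids the critical point, iterating this configuration forward keeps $\sigma^k(\tau)$ a branch point separating the images of those postcritical arms. But a branch point of a tree must separate three of its endpoints, and the endpoints of $\Hub(\nu)$ are postcritical points; so $\tau$ lies on a path $[\sigma^i\nu,\sigma^j\nu]$ with $i,j$ bounded in terms of how many iterates it takes the three arms at $\tau$ to first ``see'' distinct postcritical endpoints — and here density of precritical points (Theorem~\ref{Thm:Hubbard_tree}\eqref{Item:PrecDenseHub}) plus the injection on local arms forces $\tau$, after finitely many iterations, into the finite tree $H_N$ for $N$ bounded in terms of the (finite) depth of the precritical points nearest $\tau$ on each arm. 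Since $H_N$ has finitely many branch points, the forward orbit of $\tau$ eventually enters $H_N$ and stays among its branch points, so it is preperiodic; if the orbit never leaves a finite tree at all, then since $\sigma$ permutes the finitely many branch points of that finite tree that are not precritical, $\tau$ is in fact periodic or preperiodic.

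I would then close by remarking on the exceptional clause: when $d=2$ there are no precritical branch points (each precritical point $w\star\nu$ with $d=2$ has exactly two local arms, one for each replacement of $\star$), so the exception is vacuous; for $d>2$ a precritical point $w\star\nu$ can genuinely be a branch point, with up to $d$ arms coming from the $d$ choices of symbol replacing $\star$ plus further arms, and such a point need not be preperiodic — hence the exception is necessary. The real work, as flagged, is the quantitative claim that the forward orbit of a non-precritical branch point enters a fixed finite subtree; everything else is bookkeeping with Lemma~\ref{Lem:local_arm} and Theorem~\ref{Thm:Hubbard_tree}\eqref{Item:Diff},\eqref{Item:PrecDenseHub}.
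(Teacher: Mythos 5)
Your setup is sound: you correctly invoke the injectivity of the local-arm map (Lemma~\ref{Lem:local_arm}) to propagate the branch-point property along the forward orbit, you correctly note that the orbit of a non-precritical $\tau$ avoids $\star\nu$, and your handling of the exceptional clause (precritical branch points only when $d>2$, since $\star\nu$ has at most $d$ arms because $\nu$ is an endpoint) matches the paper's. But the core of the theorem is exactly the part you flag as ``the main obstacle,'' and your proposal does not actually cross it.

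The gap is in the claim that the forward orbit ``enters $H_N$ and stays among its branch points'' for some fixed $N$. The finite trees $H_N$ are not $\sigma$-invariant --- $\sigma(H_N)\subset H_{N+1}$, not $H_N$ --- so landing in some $H_N$ says nothing about the next iterate. Equivalently: your idea is to control the orbit via the depths of the precritical points nearest $\tau$ on each local arm (these depths decrease by one under iteration, by property~\eqref{Item:Diff} of Theorem~\ref{Thm:Hubbard_tree}), but you never address what happens when an arm reaches depth~$1$, i.e.\ contains $\star\nu$. At that moment the next depth on that arm can jump arbitrarily high, and no uniform bound on $N$ follows. This reset phenomenon is precisely why wandering branch points are not ruled out by a naive decrement argument. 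The paper's proof handles it by tracking $m(k)=\max\{a(k),b(k),c(k)\}$ over three chosen arms and deriving a contradiction from the ``record-setting'' times $k_j$ at which $m$ exceeds all previous values: one shows that one of the three arms at time $k_{j+1}$ is trapped inside the component $S(k_j)$ around the critical point, forcing its minimal precritical depth to be strictly larger than $m(k_j+1)$, contradicting the definition of $k_{j+1}$. That combinatorial squeeze (Thurston's no-wandering-triangles argument transposed to trees) is the missing ingredient, and without it your finiteness claim is unsupported. For $d=\infty$ there is an additional issue you don't touch: ``finitely many precritical points of bounded depth'' fails, so one must replace depth by a weighted ``size'' (depth plus $\sum|\mathtt e_i|$) before the same record argument applies.
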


The following proof is inspired by Thurston's \emph{No Wandering Triangles Theorem} for quadratic minor laminations; see \cite[Theorem~II.5.2]{ThurstonLami}.

\begin{proof}
Let $\tau$ be a branch point of $\Hub(\nu)$. If $\tau$ is the critical point, then the number of components of $\Hub(\nu)\sm\{\tau\}$ can be at most $d$ because the critical value $\nu$ is an endpoint (Lemma~\ref{Lem:CriticalValueEndpoint}). This implies that the number of components at any precritical point can be at most $d$. Thus precritical points can be branch points only if $d>2$.

Now suppose that $\tau$ is a branch point that is not precritical, and set $\tau_k:=\sigma^k\tau$ for $k\ge 0$. By injectivity of the map of local arms, branch points can only be mapped to branch points. Hence, if the tree is finite, the orbit $(\tau_k)$ is finite and $\tau$ has to be periodic or preperiodic. So for the rest of the proof, we can assume that the tree is infinite and precritical points are dense on it.

The set $\Hub(\nu)\sm\{\tau\}$ consists of at least $3$ components; let $A(0)$, $B(0)$, $C(0)$ be three of these. 
For $k\ge 1$ let $A(k)$, $B(k)$, $C(k)$ be the global arms at $\tau_k$ associated to the $\sigma$-images of the local arms corresponding to $A(k-1),B(k-1),C(k-1)$.
Denote by $a(k)$, $b(k)$ and $c(k)$ the minimal depths of precritical points in $A_k$, $B_k$, and $C_k$, respectively (not counting the critical value). Since precritical points are dense, these numbers are well defined. As before, we have $a({k+1})<a(k)$ except when $a(k)=1$, and similarly for $b({k+1})$ and $c({k+1})$. Set $m(k):=\max\{a(k),b(k),c(k)\}$. 

Let us first consider the case $d<\infty$. Then there are only finitely many precritical points of bounded depths, so there can be only finitely many orbit points $\tau_k\in \Hub(\nu)$ for which $m(k)$ satisfies a given bound. Thus if $\tau$ is not eventually periodic, we must have $m(k)\to\infty$. This requires that, while each of the sequences $a(k)$, $b(k)$, $c(k)$ is locally decreasing until it reaches the value $1$, the next value after $1$ must often enough be large enough so that it stays large until one of the other two index sequences becomes $1$ and then large itself. 
In particular, $m(k)$ is decreasing for all indices $k$ for which $a(k)$, $b(k)$, $c(k)$ are all greater than one, but diverging nonetheless.
We show that this is impossible.

There must be infinitely many indices $k$ such that 
\[
m(k)>\max\{m(0),m(1),\dots,m({k-1})\}
\;.
\]
Let $k_j+1$ be the subsequence of such indices. In particular, the subsequence $m({{k_j}+1})$ is strictly increasing. 

Fix a time $k_j$. Then exactly one of $a({k_j})$, $b({k_j})$, $c({k_j})$ must be equal to $1$. Without loss of generality, say $a({k_j})=1$, so $\star\nu\in A(k_j)$ and $m(k_j+1)=a(k_j+1)$. Let $S(k_j)$ be the component of $\Hub(\nu)\sm \sigma^{-1}(\tau_{k_j+1})$ that contains the critical point. Then $S(k_j)\subset A({k_j})$. Note for later use that the minimal depth of precritical points in $S(k_j)\setminus\{\star\}$ is then $a(k_j+1)+1=m(k_j+1)+1$.

Now consider the point $\tau_{k_{j+1}}$ at time $k_{j+1}$. If $\tau_{k_{j+1}}\not\in S(k_j)$ then $S(k_{j+1})\supset S(k_j)$, but this would imply $m(k_{j+1}+1)\le m(k_j+1)$ in contradiction to the construction. Therefore $\tau_{k_{j+1}}\in S(k_j)$. Among the three branches $A(k_{j+1})$, $B(k_{j+1})$ and $C(k_{j+1})$ at $\tau_{k_{j+1}}$, one contains the critical point, and at most one other branch can contain a single element of $\sigma^{-1}(\tau_{k_j+1})$ (the other elements are in the same component as the critical point). Therefore, one of the three branches $A(k_{j+1})$, $B(k_{j+1})$ and $C(k_{j+1})$ is completely contained in $S({k_j})\setminus \{\star\nu\}$. 

So the minimal depth of precritical points in that component is at least $m(k_j+1)+1$ as noted earlier. This is a contradiction because the first time when an index greater than $m(k_j+1)$ appears is by definition at time $k_{j+1}+1$. 

For the case $d=\infty$, interpreting the symbols in $A_\infty$ as integers, we assign to every precritical point $\rho=\mathtt e_1...\mathtt e_{n-1}\star\nu$ of depth $n\ge 1$ a \emph{size}:
\[
|\rho|:=n+\sum_{i=1}^{n-1} |\mathtt e_i|.
\]
Then $|\rho|=1$ if and only if $\rho=\star\nu$. And $|\rho|>1$ implies $|\sigma(\rho)|<|\rho|$. This notion of size of a precritical point is not canonical (unlike the depth of a critical point) because it changes under permuations of $A_\infty$. Nevertheless, it has all the properties of the depth we have used in the proof above: define $a(k)$ as the smallest size of precritical points in $A_k$. Then $a(k)=1$ or $a(k+1)<a(k)$. Since there are only finitely many precritical points of bounded size, once again $\limsup_{k\to \infty} m(k)=\infty$. 
\end{proof}

\begin{lemma}[Number of branches at periodic orbit]
\label{Lem:NumberBranchesConstant}
If a periodic orbit in $\Hub(\nu)$ does not contain the critical point, then all points on the orbit have the same number of branches.
\end{lemma}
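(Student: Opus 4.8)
The strategy is to use Lemma~\ref{Lem:local_arm}, which tells us that $\sigma$ maps local arms at $\tau$ injectively to local arms at $\sigma(\tau)$. Let $O=\{\tau_0,\tau_1,\dots,\tau_{p-1}\}$ be a periodic orbit of period $p$ not containing the critical point $\star\nu$, with $\sigma(\tau_i)=\tau_{i+1 \bmod p}$. Since no $\tau_i$ is the critical point, applying Lemma~\ref{Lem:local_arm} $p$ times gives an injection from the set of local arms at $\tau_0$ to itself (and likewise a cyclic composition of injections $\tau_0\to\tau_1\to\dots\to\tau_0$). The number of local arms at $\tau_i$ equals the number of branches (global arms) at $\tau_i$, since by definition each global arm carries exactly one local arm. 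So the plan is: first observe that $\sigma$ restricted to the arms around the orbit gives a chain of injections $n_0\hookrightarrow n_1\hookrightarrow\cdots\hookrightarrow n_{p-1}\hookrightarrow n_0$ where $n_i$ is the number of branches at $\tau_i$; an injection between finite sets forces equalities, and if any $n_i$ were infinite a little more care is needed.

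\textbf{Key steps.} First I would record that each $\tau_i$ is a branch point or not uniformly: by Lemma~\ref{Lem:local_arm} the arm map $\tau_i\to\tau_{i+1}$ is injective, so $n_{i+1}\ge n_i$ for all $i$ (indices mod $p$), whence $n_0\le n_1\le\cdots\le n_{p-1}\le n_0$ and all $n_i$ are equal whenever they are finite. Second, I must rule out the infinite case, or rather handle it: if some (hence all, by the inequality chain) $n_i=\infty$, the argument still goes through because the composed map $\sigma^p$ on local arms at $\tau_0$ is still injective, but injectivity of an inequality chain among possibly-infinite cardinals only gives $n_i\le n_{i+1}$ and closing the cycle gives $n_0\le n_0$, which is vacuous. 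However, Theorem~\ref{Thm:Hubbard_tree}\eqref{Item:HubEndpoints} and the structure of $\Hub(\nu)$ show branch points have at most countable valence, and in fact at a point that is not precritical the valence is finite: a branch point $\tau$ maps forward to branch points, and (as in the No Wandering Branch Points argument) one can bound the valence — alternatively, one simply notes that periodicity plus Lemma~\ref{Lem:local_arm} forces the arm map around the cycle to be a \emph{bijection} on local arms (an injective self-map need not be surjective in general, but composing the cycle of injections and using that the total composition is a self-injection on a set that injects into each $n_i$, combined with the inequality chain being a cycle, forces each injection to be a bijection once we know one $n_i<\infty$). So the crux is establishing $n_i<\infty$; I would invoke that $\tau_i$ is a branch point not equal to the critical point and not precritical (if it were precritical, Theorem~\ref{Thm:NoWanderingBranchPoint} shows precritical points can only be branch points when $d>2$, and such points are not periodic as the critical orbit's itineraries are distinct — but actually a precritical point eventually maps to $\nu$, an endpoint, so a precritical point cannot lie on a periodic orbit at all), and use finiteness of branching away from precritical points.

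\textbf{Main obstacle.} The main difficulty is the bookkeeping around whether the number of branches can be infinite and whether the injection is automatically a bijection. The cleanest resolution is: since the orbit is periodic, $\sigma^p$ fixes $\tau_0$ and induces an injective self-map $\Sigma$ on the set $\mathcal{L}_0$ of local arms at $\tau_0$; simultaneously, the inequalities $n_0\le n_1\le \cdots\le n_0$ around the cycle are equalities as cardinals \emph{if} $n_0$ is finite, and a periodic branch point not containing (and not mapping to) the critical point cannot be precritical, hence — once we know precritical points are never periodic — Theorem~\ref{Thm:NoWanderingBranchPoint} tells us nothing is lost, but we still need finite valence. I expect the paper resolves this by a short argument that at a non-precritical branch point the valence is finite (e.g.\ because in an infinite tree with precritical points dense, each branch at $\tau$ contains a precritical point of some minimal depth, and the local-arm map strictly decreases these depths as in the proof of Lemma~\ref{Lem:CriticalValueEndpoint}/Theorem~\ref{Thm:NoWanderingBranchPoint}, so infinitely many branches would be incompatible with periodicity). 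Granting finite valence at each $\tau_i$, the chain of injections $n_0\le n_1\le\cdots\le n_{p-1}\le n_0$ closes up to force all $n_i$ equal, which is the assertion.
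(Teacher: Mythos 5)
Your core idea — use Lemma~\ref{Lem:local_arm} to get a chain of injections $n_0\hookrightarrow n_1\hookrightarrow\cdots\hookrightarrow n_{p-1}\hookrightarrow n_0$ around the cycle — is exactly the paper's argument, which is stated in one sentence: the branch counts along the orbit form a periodic, non-decreasing sequence, hence constant. But your detour about infinite valence is both unnecessary and contains an incorrect claim. You write that ``closing the cycle gives $n_0\le n_0$, which is vacuous.'' That is not what closing the cycle gives. From $n_0\le n_1\le\cdots\le n_{p-1}\le n_0$ and transitivity you get $n_0\le n_i\le n_0$ for \emph{every} $i$, and then antisymmetry of cardinal comparison (Schr\"oder--Bernstein) forces $n_i=n_0$ for all $i$, with no finiteness hypothesis whatsoever. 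You confused ``an injective self-map need not be surjective'' (true, but irrelevant — the conclusion is about cardinalities, not about the arm-map being a bijection) with the actual logical step, which is simply a sandwich of cardinal inequalities.

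This matters beyond aesthetics: the paper proves finiteness of valence at periodic orbits only later, in Lemma~\ref{Lem:ExistenceCharacteristic}~\eqref{Item:NumberArmsFinite}, whose proof relies on this very lemma (via Lemma~\ref{Lem:return} and the characteristic-point analysis). So if you truly needed finite valence to close the present argument, you would be at risk of circularity. You don't need it; the one-line argument already works for arbitrary cardinalities. Your observation that precritical points cannot lie on a periodic orbit avoiding $\star\nu$ is correct but likewise beside the point here.
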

\begin{proof}
Since $\sigma\colon\Hub(\nu)\to\Hub(\nu)$ is locally injective except at the critical point, it follows that the number of arms (local or global) at any point $x\in\Hub(\nu)$ can only increase, except at the critical point. So if a periodic orbit does not contain the critical point, then the number of branches at each of its points is a periodic and monotone sequence, hence constant.
\end{proof}


\subsection{Characteristic points and types of branch points}

\begin{definition}[Characteristic point]
A point $\tau\in\Hub(\nu)$ is called a \emph{characteristic point} if a single component of $\Hub(\nu)\sm\{\tau\}$ contains the critical point and all points on the orbit of $\tau$ (except $\tau$ itself when $\tau$ is periodic), but not the critical value. 
\end{definition}

Consequently, a characteristic point $\tau$ lies on the critical path of $\nu$ and the path from $\nu$ to any point in the orbit of $\tau$ contains $\tau$. The critical value $\nu$ is always characteristic.

\begin{lemma}[Return to critical path]
\label{Lem:return}
If $\tau \in \Hub(\nu)$ is not an endpoint or precritical point of the Hubbard tree, then for every local arm $L$ at $\tau$, there exists an iterate of $\sigma$ such that $\tau$ is mapped to the critical path and the image of $L$ points towards the critical value.
\end{lemma}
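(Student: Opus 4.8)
The plan is to track, along the forward orbit of $\tau$, two things simultaneously: the position of $\tau_k := \sigma^k(\tau)$ relative to the critical path, and where the image local arm $L_k$ at $\tau_k$ points. By Lemma~\ref{Lem:local_arm}, $\sigma$ maps local arms at $\tau_k$ injectively to local arms at $\tau_{k+1}$ (using that $\tau$, hence $\tau_k$, is never the critical point — otherwise $\tau$ would be precritical, which we excluded, and one checks the whole forward orbit avoids $\star\nu$ for the same reason), so the image local arm $L_k$ of $L$ is well-defined for all $k\ge 0$. First I would recall from the definition of characteristic point and the remark following it that a point on the critical path with the property that its whole orbit lies in one component — and in particular the critical value $\nu$ itself — plays the role of a ``target''. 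The key idea is to argue that the orbit of $\tau$ must eventually reach the critical path, and then that once it is there, the image arm must point towards $\nu$.

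For the first part — reaching the critical path — I would use the same minimal-depth (or, for $d=\infty$, minimal-size as in Theorem~\ref{Thm:NoWanderingBranchPoint}) bookkeeping argument that drives Lemma~\ref{Lem:CriticalValueEndpoint} and Theorem~\ref{Thm:NoWanderingBranchPoint}: consider the component $S_k$ of $\Hub(\nu)\sm\{\tau_k\}$ on the $\nu$-side, i.e.\ the union of components not containing the critical point, and its minimal precritical depth $s_k$. Each application of $\sigma$ either decreases this minimal depth or else forces $\tau_{k+1}$ to lie on the critical path (because the critical point has moved to the ``far'' side, so the component on the $\nu$-side now contains the interval abutting $\nu$, which is part of the critical path). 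Since depths are positive integers and there are only finitely many precritical points of bounded depth, the decrease cannot continue forever, so some $\tau_k$ lands on the critical path. If $\tau$ is itself on the critical path already, there is nothing to do for this step.

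For the second part — the image arm points towards $\nu$ — suppose $\tau_k$ lies on the critical path but the image arm $L_k$ points away from $\nu$, i.e.\ into a component of $\Hub(\nu)\sm\{\tau_k\}$ not containing $\nu$. That component then contains no point of $[\tau_k,\nu]$ and in particular contains neither $\nu$ nor (being on the $\nu$-side of $\tau_k$ along the critical path, or off the critical path entirely) the critical point; so by Lemma~\ref{Lem:Hn}\eqref{it:inj} applied in a large enough finite tree, all itineraries in it start with the same symbol in $A_d$, which means $\sigma$ restricted to it is injective and we can continue the argument: the minimal precritical depth in the component containing $L_k$ is at least $2$, and iterating once more strictly decreases it (or else pushes $\tau$ back onto the critical path on the $\nu$-side). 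Since depths cannot strictly decrease forever, the image arm must after finitely many further steps point towards $\nu$; combining with the first part and composing the two bounded sequences of iterates gives the claim.

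\textbf{Main obstacle.} The delicate point is coordinating the two bookkeeping processes: after the orbit first hits the critical path it may leave it again, and the image arm may oscillate between pointing towards and away from $\nu$, so one must choose the monotone quantity carefully enough that \emph{both} ``on the critical path'' and ``image arm towards $\nu$'' are forced simultaneously, rather than in alternation forever. I expect the cleanest way is to work with a single quantity — the minimal precritical depth in the component of $\Hub(\nu)\sm\{\tau_k\}$ that lies strictly between $\tau_k$ and $\nu$ together with the image arm $L_k$ — and show that it is monotone decreasing until the desired configuration is reached, exactly as the analogous quantities behave in the proofs of Lemma~\ref{Lem:CriticalValueEndpoint} and Theorem~\ref{Thm:NoWanderingBranchPoint}; the $d=\infty$ case then needs the ``size'' substitute from Theorem~\ref{Thm:NoWanderingBranchPoint} verbatim. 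The hypothesis that $\tau$ is neither an endpoint nor precritical is used precisely to guarantee that $\tau$ has a nondegenerate local arm structure that is preserved under $\sigma$ and that the monotone quantity is well-defined and $\ge 2$.
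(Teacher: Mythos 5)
Your plan takes a genuinely different route from the paper's, and it has a real gap that you yourself flag as the ``main obstacle'' without resolving it. Concretely: in your step~2 you assert that if $\tau_k$ is on the critical path and $L_k$ points away from $\nu$, then the component containing $L_k$ ``contains neither $\nu$ nor \dots the critical point.'' That is false in the case where $L_k$ points along the critical path towards $\star\nu$; in that case the component \emph{does} contain the critical point, your injectivity / depth-decrease argument breaks down, and after one more iterate $\tau_{k+1}$ has generally left the critical path again. This is precisely the oscillation you worry about, and the ``single monotone quantity'' you gesture at (min depth in the union of the $\nu$-side component and the $L_k$-component) is not shown to be well-defined or monotone across these transitions --- so as written there is no proof that the process terminates.

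The paper avoids the coordination problem entirely by working only with the global arm $A_k$ at $\tau_k$ containing $\sigma^k(L)$, and never separately tracking ``is $\tau_k$ on the critical path.'' The observation is that both conclusions of the lemma follow at once from the single condition ``$A_k$ contains $\nu$ but not $\star\nu$'' (because then $\tau_k$ separates $\star\nu$ from $\nu$, hence lies on $[\star\nu,\nu]$). The paper first finds a $t_1$ with $\nu\in A_{t_1}$ (since $A_0$ contains a precritical point whose orbit reaches $\nu$); then, arguing by contradiction, supposes that every $A_t$ for $t\ge t_1$ contains \emph{both} $\nu$ and $\star\nu$, and takes a second global arm $B_{t_1}$ at $\tau_{t_1}$ (which exists because $\tau_{t_1}$ is not an endpoint). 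Since $B_{t_1}$ never contains $\star\nu$, it maps injectively forward and contains precritical points, so after finitely many steps $s$ its image lands in the arm containing $\star\nu$, i.e.\ in $A_{t_1+s}$; but then the two distinct local arms at $\tau_{t_1}$ have the same image under $\sigma^s$, contradicting the injectivity of $\sigma$ on local arms (Lemma~\ref{Lem:local_arm}, iterated, using that no $\tau_j$ is the critical point). That contradiction argument --- not a monotone-depth bookkeeping --- is what makes the proof close. If you want to repair your approach along the lines you suggest, you would at minimum need to handle the $L_k$-towards-$\star\nu$ case explicitly and then prove that some genuinely monotone quantity strictly decreases across it; the paper's argument sidesteps both issues.
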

\begin{proof}
Let $L$ be any local arm at $\tau$ and, for $k\ge 0$, let $A_k$ be the global arm at $\sigma^k\tau$ that contains $\sigma^k(L)$.
Since $A_0$  contains a precritical point, there exists $t_1\ge 0$ such that $A_{t_1}$ contains the critical value. We claim that there exists $t_2\ge t_1$ such that $A_{t_2}$ contains the critical value but not the critical point. Then $\sigma^{t_2}(\tau)$ must lie on the critical path, which finishes the proof. 

If the claim is false, then for all $t\ge t_1$ the global arm $A_t$ contains the critical value and the critical point. Since $\tau$ is not an endpoint or a precritical point, $\sigma^{t_1}(\tau)$ is not an endpoint and there exists a global arm $B_{t_1}$ different from $A_{t_1}$. Now $B_{t_1}$ contains precritical points and is mapped injectively until it contains the critical point, which must happen after a finite number of iterations $s$. Thus, the two corresponding local arms are mapped to the same local arm under $\sigma^s$, which contradicts Lemma~\ref{Lem:local_arm}. 
\end{proof}


\begin{lemma}[Characteristic point]
\label{Lem:ExistenceCharacteristic}
Periodic points have the following properties.
\begin{enumerate}
\item
Every periodic orbit in $\Hub(\nu)$ has a characteristic point, except if it consists of endpoints.
\item
\label{it:glo_char}
Every global arm at the characteristic point maps homeomorphically onto its image under the first return map, except possibly if the associated local arm maps to the  local arm towards the critical point or towards the critical value.
\item
\label{it:loc_ret}
Specifically, the local arm at the characteristic point to the critical point maps, under the first return map of the characteristic point, to the local arm either to the critical point or to the critical value.
\item
\label{it:loc_char}
Moreover, every local arm at every point on the periodic orbit eventually maps to the local arm  at the characteristic point towards the critical value, or to the one towards the critical point. 
\item
\label{Item:NumberArmsFinite}
Every periodic orbit has finitely many local arms.
\end{enumerate}
\end{lemma}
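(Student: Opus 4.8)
\textbf{Proof proposal for Lemma~\ref{Lem:ExistenceCharacteristic}.}

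The plan is to prove the five items in the stated order, bootstrapping from the existence of a characteristic point and using Lemma~\ref{Lem:local_arm} (injectivity of the local-arm map at non-critical points) together with Theorem~\ref{Thm:NoWanderingBranchPoint} and Lemma~\ref{Lem:return} as the main engine.

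For item (1), let $\mathcal O$ be a periodic orbit not consisting of endpoints, and pick $\tau'\in\mathcal O$ that is not an endpoint. First I would argue that the set of points of $\mathcal O$ lying on the critical path, and having a branch pointing towards the critical value, is nonempty: apply Lemma~\ref{Lem:return} to any local arm at $\tau'$ (note $\tau'$ cannot be precritical, since a periodic precritical point would force $\nu$ periodic and $\nu$ to coincide with a point of its own backward orbit, contradicting the definition of $X_\nu$) to land on the critical path with an image arm pointing to $\nu$. Among the finitely or countably many orbit points with this property, I would then take the one, call it $\tau$, for which the component of $\Hub(\nu)\sm\{\tau\}$ containing the critical value is \emph{smallest} (equivalently, the one closest to $\nu$ along the critical path); this is well-defined because the critical path is an interval (Lemma~\ref{Lem:CritPathInterval}) and the orbit points on it are totally ordered by $\prec$. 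For this $\tau$ I claim every other orbit point, and the critical point $\star\nu$, lies in the single component $K$ of $\Hub(\nu)\sm\{\tau\}$ not containing $\nu$: if some $\sigma^j\tau$ lay in the $\nu$-component, the path $[\nu,\sigma^j\tau]$ would avoid $\tau$, and pulling back along the orbit one produces an orbit point strictly between $\tau$ and $\nu$ on the critical path still having a branch toward $\nu$, contradicting minimality. That $\star\nu\in K$ follows since $\tau$ is on the critical path and $\star\nu$ is its endpoint on the side away from $\nu$. Hence $\tau$ is characteristic.

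For items (2)--(4), write $f$ for the first return map of the characteristic point $\tau$, of period $p$. Item (3): the global arm $G_*(\tau)$ towards the critical point contains $\star\nu$; along the orbit $\tau,\sigma\tau,\dots,\sigma^{p-1}\tau$ the corresponding local arms stay disjoint from the critical point until the iterate at which $\star\nu$ enters that arm (which must happen within one period, since the arm always contains $\star\nu$ at the start of the cycle), and after that iterate the image arm points towards $\nu=\sigma(\star\nu)$ or again towards $\star\nu$; so under $f$ the local arm toward $\star\nu$ goes to the local arm toward $\star\nu$ or toward $\nu$. Item (2): for a global arm $G_i(\tau)$ whose local arm is not the one toward $\star\nu$ and not the one toward $\nu$, the forward images $\sigma^k(G_i^1(\tau))$ are mapped injectively (Lemma~\ref{Lem:local_arm}) and never meet $\star\nu$ before returning — if $\star\nu$ entered such an image, that arm would be the critical-point arm at that stage, and by characteristicity its eventual return would land on the $\star\nu$-arm of $\tau$, excluded — so $f|_{G_i(\tau)}$ is a homeomorphism onto its image. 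Item (4): take any local arm $L$ at any orbit point $\sigma^j\tau$; by Lemma~\ref{Lem:return} some iterate sends $\sigma^j\tau$ back to the critical path with image arm toward $\nu$, and by the minimality characterization of $\tau$ this critical-path orbit point is $\tau$ itself with the image arm being the one toward $\nu$; alternatively, tracking when $\star\nu$ is swept up gives the arm toward $\star\nu$. Thus every local arm on the orbit eventually maps to one of these two distinguished local arms at $\tau$.

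For item (5), suppose the orbit had infinitely many local arms. By item (4) every one of them eventually maps to one of the two distinguished local arms at $\tau$ (toward $\nu$, toward $\star\nu$). Since the local-arm map is injective at every non-critical point (Lemma~\ref{Lem:local_arm}), and the only orbit point that can be the critical point is $\star\nu$ itself, the dynamics on the set of all local arms along the orbit is ``injective away from $\star\nu$''; infinitely many arms all funneling into two target arms then forces infinitely many of them to be pairwise identified under iteration at a point which is not the critical point — contradicting injectivity — unless infinitely many arms are absorbed when passing through $\star\nu$. But at $\star\nu$ the number of local arms is at most $d$ (as in the proof of Theorem~\ref{Thm:NoWanderingBranchPoint}, since $\nu$ is an endpoint), so only finitely many arms per period can be absorbed there, and the branch point $\tau$ being periodic (Theorem~\ref{Thm:NoWanderingBranchPoint}) means this ``clean-up'' recurs with bounded gain. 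Making this counting precise — essentially a pigeonhole argument on the eventual-image map from the (hypothetically infinite) set of orbit arms into the finite set $\{L_\nu\text{-arm},\ L_{\star\nu}\text{-arm}\}$ at $\tau$, using injectivity away from the single point $\star\nu$ of bounded valence — yields the contradiction. I expect item (5), and the bookkeeping in item (1) that identifies $\tau$ via a minimality property and then verifies every orbit point and $\star\nu$ lie on one side, to be the main obstacles; items (2)--(4) are then fairly direct consequences of Lemma~\ref{Lem:local_arm} and Lemma~\ref{Lem:return} once the characteristic point is pinned down.
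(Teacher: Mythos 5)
Your overall architecture (land on the critical path via Lemma~\ref{Lem:return}, pick the orbit point closest to $\nu$, then derive items (2)--(5) from Lemma~\ref{Lem:local_arm}) matches the paper's plan, and the handling of (2) and (3) is essentially correct. However, there are two genuine gaps.

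\textbf{Item (1).} After choosing $\tau$ on the critical path closest to $\nu$, you still must rule out orbit points sitting on branches that hang off the critical path at some point strictly between $\tau$ and $\nu$; these lie in the $\nu$-component of $\Hub(\nu)\sm\{\tau\}$ without being on the critical path. Your remedy --- ``pulling back along the orbit one produces an orbit point strictly between $\tau$ and $\nu$ on the critical path'' --- is asserted but not justified, and it is not obvious: the backward orbit of such a point need not hit the open segment $(\tau,\nu)$ of the critical path at all. The paper closes this hole by a different device: it forms the subtree $H'$ spanned by the orbit of the candidate $\tau_1$, shows $\sigma(H')\subset H'\cup[\tau_1,\nu]$, and deduces from a branch-counting argument that $\tau_1$ must be an endpoint of $H'$ (since branch counts can only increase under $\sigma$ except at $\tau_1$, and every finite tree has endpoints). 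That forces all orbit points onto the $\star\nu$-side. You need some argument of this kind; the minimality of $\tau$ alone does not suffice.

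\textbf{Item (5).} Your pigeonhole argument, which you yourself flag as needing to be ``made precise'', is fighting a non-problem. The paper dispenses with the precritical case at the very start of the proof (if $\tau$ is precritical and periodic, then $\nu$ is $\star$-periodic and $\tau$ is on the critical orbit, a degenerate case). Once that is excluded, no point on the orbit of $\tau$ is the critical point, so by Lemma~\ref{Lem:local_arm} the local-arm dynamics along the orbit is globally injective --- there is no ``absorption at $\star\nu$'' to worry about. Then (3) shows the two distinguished local arms at the characteristic point are each periodic (lie on finite cycles of local arms), and (4) says every local arm on the orbit eventually maps into one of those two cycles; injectivity forces each such arm to already lie on one of the two cycles. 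Finiteness is then immediate. Your route also omits the Fatou-interval case in (4) (a global arm that is a single Fatou interval contains no precritical point, so the ``contains a precritical point, hence cannot map homeomorphically forever'' argument needs a separate clause, which the paper supplies).

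In short: the high-level plan is right, but the key step in (1) needs the $H'$-subtree/endpoint-counting argument (or an equivalent), and (5) becomes trivial once you exclude the precritical case up front instead of trying to account for arm-mergers at $\star\nu$.
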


If a periodic orbit contains an endpoint, then by Lemma~\ref{Lem:NumberBranchesConstant} it must either contain the critical point (so the critical point is periodic), or it must be an orbit of endpoints (remember that on a Hubbard tree, every endpoint is postcritical, 
so in this case the critical orbit is strictly preperiodic and terminates in a cycle of endpoints).

\begin{proof}
Consider a periodic point $\tau \in\Hub(\nu)$ that is not an endpoint. If $\tau$ is precritical, then $\nu$ must be $\star$-periodic and $\tau$ is on its orbit. In this case, there is nothing to show and we can exclude this case in the sequel. 

By Lemma~\ref{Lem:return}, there exists an iterate $k$ such that $\sigma^k(\tau)$ is on the critical path. There might be more than one iterate of $\tau$ on the critical path, so denote by $\tau_1$ the one closest to the critical value. If there is a characteristic point on the orbit of $\tau$, it must be $\tau_1$; we need to exclude that orbit points of $\tau$ exist on a branch of $\tau_1$ that is disjoint from the critical path. 

Let $H'\subset\Hub(\nu)$ be the subtree spanned by the orbit of $\tau_1$, and let $\tau_0$ be the periodic preimage of $\tau_1$, that is the point in the orbit of $\tau_1$ such that $\sigma(\tau_0)=\tau_1$. Our next claim is that $\sigma(H')\subset H'\cup[\tau_1,\nu]$. Indeed, every point in $H'$ lies on the path $[\tau_0, \tau_i]$ for some point $\tau_i$ on the orbit of $\tau_1$. The claim follows from the following observation:
\[
\sigma([\tau_0, \tau_i])\subseteq  [\tau_1,\tau_{i+1}]\cup [\tau_1,\nu] \cup [\nu, \tau_{i+1}] \subseteq [\tau_1,\tau_{i+1}]\cup [\tau_1,\nu]
\subset H'\cup[\tau_1,\nu]
\;.
\]
Now we show that $\tau_1$ is an endpoint of $H'$. Indeed, if some point $\tau_i$ has $k$ branches in $H'$, then $\sigma(\tau_i)$ has at least $k$ branches in $\sigma(H')$. Since $\sigma(H')\subset H'\cup[\tau_1,\nu]$, this means that $\sigma(\tau_i)$ must have at least $k$ branches in $H'$, except if $\sigma(\tau_i)=\tau_1$; in the latter case, the branch to $\nu$ may be missing in $H'$ and there are at least $k-1$ branches. Since the tree $H'$ must have some endpoints, this implies that $\tau_1$ is indeed an endpoint and all other orbit points have one or two branches. It follows that $\tau_1$ is characteristic.

Let $G$ be a global arm at $\tau_1$ with local arm $L$. If $G$ is not mapped homeomorphically onto its image under the first return map, then there exists $k$ smaller than the period of $\tau_1$ such that $\sigma^k(G)$ is mapped homeomorphically to the global arm at $\sigma^k(\tau_1)$ that contains the critical point. Thus, $\sigma^{k+1}(L)$ points to $\nu$; if it is not based at $\tau_1$, then it also points to $\tau_1$. Iterating this argument, we get that the image of $L$ under the first return map points to the critical point, to the critical value, or to another orbit point of $\tau$. But since $\tau_1$ is characteristic, the last case also implies that the local arm points to the critical point. This shows \eqref{it:glo_char}. Since the global arm at $\tau_1$ that contains the critical point is clearly not mapped homeomorphically, we immediately get \eqref{it:loc_ret}.

Now let $L_0$ be a local arm at $\tau$, then an iterate of $L_0$ is a local arm $L_1$ at $\tau_1$ with corresponding global arm $G_1$. If $G_1$ consists of a single Fatou interval, it must eventually be mapped to a Fatou interval containing the critical point. Otherwise, $G_1$ must contain a precritical point in its interior, so the first return map cannot map $G_1$ homeomorphically indefinitely. Thus, by \eqref{it:glo_char}, the local arm $L_0$ is eventually mapped to the local arm at $\tau_1$ pointing towards the critical point or critical value, yielding \eqref{it:loc_char}. 

Finally, we prove \eqref{Item:NumberArmsFinite}. It follows from Property~\eqref{it:loc_ret} that, at the characteristic point, the two local arms pointing to the critical point and to the critical value are periodic, possibly on different orbits (of local arms) and of different periods. By construction of the Hubbard tree, every local arm in the orbit of $\tau$ must be on the orbit of these two local arms of the critical path, so the number of local arms is finite. \looseness-1
\end{proof}

In fact, the previous proof shows more about possible types of periodic orbits that do not contain the critical point.

\begin{corollary}[Types of periodic orbits]
\label{Cor:orbits}
Every periodic orbit that does not contain the critical point has exactly one of the following types:
\begin{description}
\item [endpoints]
it is an orbit of endpoints;
\item [primitive type]
each point has exactly two branches that are fixed (locally) by the first return map;
\item [satellite type]
each point has two or more branches that are permuted transitively and cyclically by the first return map (of course the same number along the orbit);
\item [evil type]
each point has three or more branches that are permuted non-transitively, such that the first return map at the characteristic point fixes the arm towards the critical point,  and permutes all other arms transitively and cyclically.
\end{description}
\end{corollary}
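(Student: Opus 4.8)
The plan is to read off each type from the action of the first return map on the set of local arms at a characteristic point, using Lemma~\ref{Lem:ExistenceCharacteristic}.

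First I would dispose of the degenerate case. By the first statement of Lemma~\ref{Lem:ExistenceCharacteristic}, a periodic orbit not containing the critical point either consists entirely of endpoints --- this is the type \emph{endpoints} --- or it possesses a characteristic point $\tau$. By Lemma~\ref{Lem:NumberBranchesConstant} all points of the orbit have the same number $k$ of branches, and $k<\infty$ by Lemma~\ref{Lem:ExistenceCharacteristic}\eqref{Item:NumberArmsFinite}; if $k=1$ the orbit consists of endpoints, so henceforth $k\ge 2$. Since $\tau$ is characteristic, the critical point and the critical value lie in different components of $\Hub(\nu)\sm\{\tau\}$, so the local arm $L_c$ towards the critical point and the local arm $L_v$ towards the critical value are distinct.

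The heart of the argument is the cycle structure of the first return map $R=\sigma^p$ (period $p$) acting on the $k$ local arms at $\tau$. Since the orbit avoids the critical point, Lemma~\ref{Lem:local_arm} makes $\sigma$ injective on local arms all along the orbit, and because the number of local arms is the same finite number $k$ at each orbit point, $R$ acts as a permutation of the $k$ local arms at $\tau$. By Lemma~\ref{Lem:ExistenceCharacteristic}\eqref{it:loc_char} the $R$-orbit (hence the cycle) of every local arm contains $L_c$ or $L_v$, and by Lemma~\ref{Lem:ExistenceCharacteristic}\eqref{it:loc_ret} we have $R(L_c)\in\{L_c,L_v\}$. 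If $R(L_c)=L_v$, then $L_c$ and $L_v$ share a cycle, and since every arm's cycle meets $\{L_c,L_v\}$, that single cycle contains all arms: the permutation is a $k$-cycle, which is the \emph{satellite} type. If instead $R(L_c)=L_c$, then $\{L_c\}$ is a cycle by itself; by injectivity of $R$ no arm other than $L_c$ can ever land on $L_c$, so every arm $\neq L_c$ must eventually reach $L_v$, and hence the remaining $k-1$ arms form the cycle through $L_v$. For $k=2$ both arms are fixed --- the \emph{primitive} type --- while for $k\ge 3$ the arm $L_c$ is fixed and the other $\ge 2$ arms are permuted transitively and cyclically, which is exactly the \emph{evil} type.

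Finally I would transfer this picture to all orbit points: conjugating by a power of $\sigma$ --- again a bijection on local arms along the orbit by Lemma~\ref{Lem:local_arm} together with equality of the arm counts --- shows that the first return map at any orbit point has the same cycle type, so the stated description holds at every point of the orbit. Mutual exclusivity of the four types is then clear by counting branches ($1$ versus $2$ versus $\ge 3$) and by transitivity versus non-transitivity of the permutation. The only genuinely delicate point is the cycle bookkeeping in the previous paragraph: one must combine the injectivity in Lemma~\ref{Lem:local_arm} with \eqref{it:loc_char} and \eqref{it:loc_ret} of Lemma~\ref{Lem:ExistenceCharacteristic} to rule out, e.g., two nontrivial cycles, or three or more arms that are all fixed; the observation that $L_c$ is the \emph{only} arm mapping to $L_c$ is what forces the evil case to have precisely the claimed form.
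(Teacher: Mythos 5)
Your proof is correct and takes essentially the same route as the paper: both arguments rest on Lemma~\ref{Lem:NumberBranchesConstant} for constancy of the branch count, and on parts \eqref{it:loc_ret} and \eqref{it:loc_char} of Lemma~\ref{Lem:ExistenceCharacteristic} to analyze the first return map as a permutation of the local arms at the characteristic point, splitting into the cases $R(L_c)=L_v$ (satellite) versus $R(L_c)=L_c$ (primitive or evil). The only cosmetic difference is that the paper treats the two-arm case separately before the $\ge 3$-arm case, whereas you fold it into the same dichotomy; both are fine.
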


Milnor calls orbits of endpoints \emph{trivial}, except in the special case of period $1$ (the $\beta$ fixed point). The notation of \emph{evil} was introduced in \cite{HenkDierkTreesAdmissible}.

\begin{proof}
By Lemma~\ref{Lem:NumberBranchesConstant}, the number of branches at a periodic orbit is constant along the orbit. 

If the number of local arms at each point is $1$, we have an orbit of endpoints.

If the number is $2$, then the first return map must either fix both local arms, or it must exchange them, and we are in the primitive or satellite case.

It remains to consider the case that the number of local arms is $s\ge 3$. Since local arms are mapped injectively from any periodic point to its image, all local arms must be on periodic cycles of local arms, possibly on different cycles of different periods. 
 
Recall from Lemma~\ref{Lem:ExistenceCharacteristic} that every local arm at the characteristic point must map either to the local arm towards the critical point or towards the critical value. Here we distinguish two cases: the local arms at the characteristic point towards the critical point and towards the critical value are on the same orbit, or they are on different orbits. 

Suppose first that they are on the same orbit. Then all local arms at the characteristic point are on the same orbit, and we have the satellite type.

The final case is that the local arms at the characteristic point towards the critical point and towards the critical value are on different orbits. 
By this assumption and Lemma~\ref{Lem:ExistenceCharacteristic} \eqref{it:loc_ret}, the first return map fixes the local arm towards the critical point. Thus all other local arms at the characteristic point must lie in the orbit of the local arm towards the critical value and we have the evil type. 
\end{proof}

By Lemma~\ref{Lem:ExistenceCharacteristic}~\eqref{Item:NumberArmsFinite} every periodic point has finite valency. In fact, branch points have finite valency in many further cases, but not all.

\begin{corollary}[Finite valency]
Every branch point of $\Hub(\nu)$ disconnects the tree into finitely many components, except possibly precritical points when $d=\infty$.
\end{corollary}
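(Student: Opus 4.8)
The plan is to argue that the valency of every branch point $\tau$ is finite except when $\tau$ is a precritical point and $d = \infty$, by reducing all other cases, via the dynamics of local arms, to situations already handled by Lemma~\ref{Lem:ExistenceCharacteristic}. First I would dispose of the easy cases: if $\Hub(\nu)$ is finite then it has finitely many edges, so every point has finite valency. So assume $\Hub(\nu)$ is infinite; then by Lemma~\ref{Lem:Hn}\eqref{it:dense} (transported to $\Hub(\nu)$ via Theorem~\ref{Thm:Hubbard_tree}\eqref{Item:PrecDenseHub}) precritical points are dense, and in particular there are no Fatou intervals to worry about.

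Next I would split into cases according to the orbit type of $\tau$. If $\tau$ is a precritical point, then $\sigma^n(\tau) = \nu$ for some $n \ge 0$; since $\nu$ is an endpoint (Lemma~\ref{Lem:CriticalValueEndpoint}) and $\sigma$ is injective on local arms away from the critical point (Lemma~\ref{Lem:local_arm}), the valency of $\tau$ is at most $d$ times the valency of the critical point $\star\nu$ (picking up a factor $d$ each time the orbit passes through $\star\nu$), and the valency of $\star\nu$ itself is at most $d$ because its unique noncritical image $\nu$ is an endpoint — this is exactly the argument already run at the start of the proof of Theorem~\ref{Thm:NoWanderingBranchPoint}. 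Hence for $d < \infty$ precritical points have valency $\le d^2$ or so; for $d = \infty$ this bound degenerates, which is precisely the exceptional case in the statement. If $\tau$ is (pre)periodic but not precritical, then by Theorem~\ref{Thm:NoWanderingBranchPoint} it lands, after finitely many iterations, on a periodic orbit not containing the critical point; by Lemma~\ref{Lem:local_arm} the number of local arms at $\tau$ is bounded by the number at that periodic orbit point, which is finite by Lemma~\ref{Lem:ExistenceCharacteristic}\eqref{Item:NumberArmsFinite}. Since at a point that is not the critical point the number of global arms equals the number of local arms (every global arm contributes exactly one local arm, and Lemma~\ref{Lem:local_arm} gives injectivity into the finite set of local arms at the image), $\tau$ has finite valency.

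The remaining, and main, case is a branch point $\tau$ that is neither precritical nor (pre)periodic — a genuinely "wandering" branch point. But Theorem~\ref{Thm:NoWanderingBranchPoint} asserts that such branch points do not exist: every branch point is periodic, preperiodic, or precritical. So this case is vacuous, and the proof is complete once the first two cases are handled. I expect the only real subtlety to be bookkeeping: making sure that "disconnects into finitely many components" is literally the statement about the number of global arms, and that the passage between local arms and global arms at a non-critical point is a genuine bijection (so that finiteness of one gives finiteness of the other); this is immediate from the definitions but should be stated. The substantive content is entirely imported from Theorems~\ref{Thm:Hubbard_tree} and \ref{Thm:NoWanderingBranchPoint} and Lemmas~\ref{Lem:local_arm} and \ref{Lem:ExistenceCharacteristic}, so there is no new hard estimate to carry out.
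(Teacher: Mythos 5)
Your proof is correct and takes essentially the same route as the paper: invoke Theorem~\ref{Thm:NoWanderingBranchPoint} to reduce to the precritical and eventually periodic cases, bound the valency of precritical points by the valency of $\star\nu$ (which is at most $d$ because $\nu$ is an endpoint by Lemma~\ref{Lem:CriticalValueEndpoint}), and use Lemma~\ref{Lem:ExistenceCharacteristic}\eqref{Item:NumberArmsFinite} together with local-arm injectivity for the (pre)periodic case. Two minor imprecisions, neither fatal: the precritical bound is actually $\le d$, not ``$d^2$ or so'' --- by Lemma~\ref{Lem:local_arm} the forward orbit of a precritical point $w\star\nu$ has nondecreasing valency up to $\star\nu$, so its valency is bounded by that of $\star\nu$ rather than picking up an extra factor of $d$; and a preperiodic branch point that is not precritical can still land on a periodic orbit \emph{containing} the critical point (when $\nu$ is $\star$-periodic and the orbit enters the cycle at some $\sigma^k\nu$ with $k\ge 1$), but Lemma~\ref{Lem:ExistenceCharacteristic}\eqref{Item:NumberArmsFinite} covers all periodic orbits so the conclusion stands.
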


\begin{proof}
By Theorem~\ref{Thm:NoWanderingBranchPoint} every branch point is either precritical or eventually periodic. Periodic points have finitely many arms, and the same follows for preperiodic points. 

The critical value is always an endpoint (Lemma~\ref{Lem:CriticalValueEndpoint}), so every precritical point has at most $d$ components.
\end{proof}

We show in Example~\ref{Ex:InfiniteValency} that Hubbard trees can have branch points with infinitely many arms in the only possible case that is allowed by this lemma: if $d=\infty$ then there may be precritical points with infinitely many arms. Such Hubbard trees differ from the standard definition in three aspects: they are postcritically infinite, they have infinite degree, and the trees have infinitely many endpoints. 

The valency of branch points has a useful combinatorial implication.

\begin{corollary}
\label{Cor:per_diff}
Let $\tau$ be a characteristic, periodic branch point with period $p$ and $k$ branches. Then $\diff(\nu, \tau) \ge (k-2)p +1$. In particular, the global arm at the characteristic point that contains the critical value maps forward homeomorphically for at least $(k-2)p$ iterations. 
\end{corollary}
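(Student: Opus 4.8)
The plan is to track the action of the first return map $\sigma^p$ on the local arms at $\tau$. First I would note that $\tau$ cannot be precritical: a precritical point of depth $n\ge 1$ has the critical value $\nu$ on its forward orbit, contradicting the defining property of a characteristic point (one component of $\Hub(\nu)\sm\{\tau\}$ contains all orbit points of $\tau$ but not $\nu$). Hence the orbit of $\tau$ avoids the critical point, and since $\tau$ is a branch point (so $k\ge 3$), Corollary~\ref{Cor:orbits} puts its orbit into the satellite or evil case.

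Next I would pin down how $\sigma^p$ permutes the local arms at $\tau$. Write $L_\star$ and $L_\nu$ for the local arms towards the critical point and towards the critical value. By Lemma~\ref{Lem:ExistenceCharacteristic}~\eqref{it:loc_ret}, $\sigma^p(L_\star)\in\{L_\star,L_\nu\}$. In the satellite case all $k$ arms lie on a single $\sigma^p$-cycle, so $\sigma^p(L_\star)=L_\nu$ and $L_\star$ sits $k-1$ steps past $L_\nu$ along that cycle; in the evil case $L_\star$ is fixed and the remaining $k-1$ arms (one of them $L_\nu$) form a single $\sigma^p$-cycle. Either way, $L_\nu,\sigma^p(L_\nu),\dots,\sigma^{(k-2)p}(L_\nu)$ are $k-1$ pairwise distinct arms, none of which equals $L_\star$; in particular, for $1\le j\le k-2$ the arm $\sigma^{(j-1)p}(L_\nu)$ is mapped by $\sigma^p$ to an arm that is neither $L_\star$ nor $L_\nu$.

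Then I would iterate Lemma~\ref{Lem:ExistenceCharacteristic}~\eqref{it:glo_char}. Let $G_\nu$ denote the global arm at $\tau$ containing the critical value. Its local arm is $L_\nu$, which maps under $\sigma^p$ to an arm that is neither the arm to the critical point nor to the critical value, so by \eqref{it:glo_char} the first return map restricts to a homeomorphism of $\overline{G_\nu}$ onto its image, a subtree of the closure of the global arm at $\tau$ with local arm $\sigma^p(L_\nu)$. Invoking \eqref{it:glo_char} for that ambient global arm (its local arm again maps to something different from $L_\star$ and $L_\nu$) and restricting, one repeats this $k-2$ times and obtains that the restriction of $\sigma^{(k-2)p}$ to $\overline{G_\nu}$ is a homeomorphism onto its image --- which is exactly the second assertion of the corollary. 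Hence $\sigma^{(k-2)p}$ is injective on the arc $[\tau,\nu]\subseteq\overline{G_\nu}$, so $[\tau,\nu]$ can be iterated injectively at least $(k-2)p$ times. By the description of $\diff$ from the proof of Theorem~\ref{Thm:Hubbard_tree} --- $\diff(a,b)$ is the depth of the first precritical point on $[a,b]$, equivalently $[a,b]$ iterates injectively exactly $\diff(a,b)-1$ times --- this forces $\diff(\nu,\tau)-1\ge (k-2)p$.

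The one step needing care is the bookkeeping in this iteration: after the first application of $\sigma^p$ one is looking at a subtree of a global arm, not at a global arm itself, so one must apply \eqref{it:glo_char} to the ambient global arm at $\tau$ and then restrict the homeomorphism it provides, and one must check that the local arm relevant at stage $j$ is exactly $\sigma^{(j-1)p}(L_\nu)$ so that the combinatorial count of the second paragraph is the one being used at each step. Everything else is a straightforward composition of homeomorphisms.
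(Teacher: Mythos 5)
Your proof is correct and follows essentially the same route as the paper's: use Lemma~\ref{Lem:ExistenceCharacteristic}~\eqref{it:loc_ret} to pin down where $L_\star$ sits in the $\sigma^p$-dynamics on local arms (satellite vs.\ evil), deduce that $L_\nu$ avoids $\{L_\star,L_\nu\}$ for $k-2$ return iterates, and then iterate \eqref{it:glo_char} to get the homeomorphism of $G_\nu$ under $\sigma^{(k-2)p}$. You are somewhat more explicit than the paper --- notably in ruling out that $\tau$ is precritical, in tracking the composition of restricted homeomorphisms step by step, and in spelling out the translation from ``iterates injectively $(k-2)p$ times'' to $\diff(\nu,\tau)\ge (k-2)p+1$ --- but these are just careful elaborations of the same argument, not a different one.
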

\begin{proof}
Whether or not $\tau$ is of evil or of satellite type, the first time the local arm at $\tau$ pointing towards $\nu$ is mapped under the first return map to the local arm towards the critical point or the critical value is at $k-1$ iterations. Hence by Lemma~\ref{Lem:ExistenceCharacteristic}~\eqref{it:glo_char}, the global arm at $\tau$ containing $\nu$ is mapped homeomorphically under $\sigma^{(k-2)p}$. 
\end{proof}

\Newpage

\subsection{Consistency with the classical definition}

We end this section by summarizing some properties of our Hubbard trees to show that our construction truly extends the classically known Hubbard trees in the cases when the latter exist.

\goodbreak

\begin{theorem}
    The Hubbard tree $T=\Hub(\nu)$ has the following properties:
    \begin{enumerate}
        \item \label{it:cont_surj} $\sigma:T\mapsto T$ is continuous and surjective.
        \item \label{it:inverse_im} Every point in $T$ has at most $d$ inverse images under $\sigma$. 
        \item \label{it:local_homeo} At every point other than the critical point, the map $\sigma$ is a local homeomorphism onto its image 
        \item \label{it:endpoints} All endpoints of $T$ are on the critical orbit. 
        
        \item \label{it:expansivity} Expansivity: If any $x$ and $y$ with $x\neq y$ are branch points or points on the critical orbit, then there is an $n \ge 0$ such that $\sigma^n([x,y])$ contains the critical point. 
    \end{enumerate}
\label{Thm:tree_properties}
\end{theorem}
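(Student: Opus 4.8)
The plan is to reduce all five assertions to the symbolic model of $T=\Hub(\nu)$, using the results already established. Item~\eqref{it:cont_surj} is half done: continuity of $\sigma$ is Lemma~\ref{Lem:Hcont}; for surjectivity, note $\sigma(T)=\bigcup_{n\ge1}\sigma^n(C_\nu)$ whereas $T=\bigcup_{n\ge0}\sigma^n(C_\nu)$, so it suffices to show $C_\nu=[\star\nu,\nu]\subseteq\sigma(T)$, and since $\sigma(T)$ is connected and contains $\nu=\sigma(\star\nu)$, it is enough that $\star\nu\in\sigma(T)$. If $\nu$ is $\star$-periodic of period $p$ this is immediate from $\sigma^{p-1}(\nu)=\star\nu$, so $\star\nu=\sigma(\sigma^{p-2}\nu)$. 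If $\nu$ is non-periodic, then $\star\nu$ is not an endpoint of $T$ (endpoints are postcritical by Theorem~\ref{Thm:Hubbard_tree}\eqref{Item:HubEndpoints}, and $\star\nu$ is not, $\nu$ not being $\star$-periodic), so some component $K$ of $T\setminus\{\star\nu\}$ avoids $\nu$; by density of precritical points (Theorem~\ref{Thm:Hubbard_tree}\eqref{Item:PrecDenseHub}) it contains a precritical point $\rho$, necessarily of depth $k\ge2$, and $\sigma^{k-1}\rho=\star\nu=\sigma(\sigma^{k-2}\rho)\in\sigma(T)$. Item~\eqref{it:endpoints} is exactly Theorem~\ref{Thm:Hubbard_tree}\eqref{Item:HubEndpoints}.

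For item~\eqref{it:inverse_im}: a point with itinerary $a\in X_\nu$ has preimages precisely among the points with itinerary $\mathtt e a$, $\mathtt e\in A_d$ — and if $a=\nu$ the only preimage is $\star\nu$, by the design of $X_\nu$ — so, distinct points of $T$ carrying distinct itineraries, there are at most $d$ of them; an interior point of a Fatou interval $I$ has its preimages in the at most $d$ Fatou intervals that $\sigma$ maps onto $I$, since $\sigma$ is a homeomorphism on every Fatou interval. For item~\eqref{it:local_homeo}: fix $x\neq\star\nu$. If $x$ has finite valency, choose a sufficiently small closed star-neighborhood $\overline{U}$ of $x$, which is compact and avoids $\star\nu$; then $\sigma|_{\overline{U}}$ is continuous (Lemma~\ref{Lem:Hcont}) and injective (Theorem~\ref{Thm:Hubbard_tree}\eqref{Item:HubSigma}, $\overline{U}$ being connected and disjoint from $\star\nu$), hence a homeomorphism onto $\sigma(\overline{U})$ by compactness, and restriction to the open star gives the assertion. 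The only points $x\neq\star\nu$ that may have infinite valency are precritical points, and only when $d=\infty$; there $\nu$ is non-periodic and $T$ carries the $X_\nu$-topology, and the formula $\sigma^{-1}(U_k(b))=\bigcup_{\mathtt e\in A_d}U_{k+1}(\mathtt e b)$ from the proof of Lemma~\ref{Lem:X_continuous} shows that $\sigma$ maps a small basic neighborhood $U_{k+1}(x)$ injectively and bicontinuously onto its image.

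Item~\eqref{it:expansivity} is the main point. Let $x\neq y$ be branch points or points on the critical orbit. Postcritical points lie in $X_\nu$, and a branch point cannot be an interior point of a Fatou interval (such points have exactly two local arms and no itinerary in $X_\nu$), so $x,y\in X_\nu$. If one of them, say $y$, contains the symbol $\star$ in its itinerary, then $y=w\star\nu$ for a finite word $w$ over $A_d$: the only elements of $X_\nu$ containing $\star$ are the precritical points, and in the $\star$-periodic case the critical value and every postcritical point are themselves precritical points of this form — for instance $\nu=(\nu_1\cdots\nu_{p-1})\star\nu$. Hence $\sigma^{|w|}([x,y])\ni\sigma^{|w|}(y)=\star\nu$ and $n=|w|$ works. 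Otherwise $x,y\in A_d^\infty$, and since they are distinct, $\diff(x,y)=k<\infty$; by the argument in the proof of Theorem~\ref{Thm:Hubbard_tree}\eqref{Item:Diff} the path $[x,y]$ contains a precritical point $\rho$ of depth $k$ (for $k=1$ this is $\star\nu$, also directly by Lemma~\ref{Lem:Hn}\eqref{it:inj}), whence $\sigma^{k-1}(\rho)=\star\nu\in\sigma^{k-1}([x,y])$ and $n=k-1$ works.

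The real obstacle is not a single hard computation but running every argument uniformly across the three regimes — postcritically finite, postcritically infinite non-periodic, and $\star$-periodic with Fatou intervals; the two facts that make this painless are that in the $\star$-periodic case the whole postcritical orbit (including $\nu$) consists of precritical points, so $\sigma^{p-1}\nu=\star\nu$, and that branch points always carry genuine itineraries in $X_\nu$ (never lying inside a Fatou interval). The one genuinely delicate technical step is the infinite-valency case of~\eqref{it:local_homeo}, where compactness is unavailable and the homeomorphism property of $\sigma$ onto its image has to be extracted directly from the explicit neighborhood basis $U_k(\cdot)$ and the preimage formula above.
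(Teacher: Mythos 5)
Your proof is correct, and your overall strategy (reduce everything to the symbolic picture and the already-established Theorem~\ref{Thm:Hubbard_tree}) matches the paper's. Items~\eqref{it:inverse_im}, \eqref{it:local_homeo} and~\eqref{it:endpoints} are handled identically in spirit, though you supply detail the paper only gestures at: the paper disposes of~\eqref{it:local_homeo} by citing local injectivity and of~\eqref{it:expansivity} by the one-line observation that every path contains a precritical point, while you spell out why branch points carry genuine itineraries and treat the two subcases ($\star$ present or not) explicitly, and you are right to flag the infinite-valency precritical points (only for $d=\infty$, hence only in the non-periodic setting) as the place where the compactness argument fails and the explicit preimage formula $\sigma^{-1}(U_k(b))=\bigcup_{\mathtt e}U_{k+1}(\mathtt e b)$ must be invoked directly.

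The one place where you take a genuinely different route is surjectivity in~\eqref{it:cont_surj}. The paper shows $[\star\nu,\nu]\subseteq\sigma(T)$ by a case distinction on whether some forward image $\sigma^k\nu$ begins with $\1$, and then uses the $\alpha$-fixed point (Lemma~\ref{Lem:alpha}) to stitch together $[\star\nu,\alpha]\subseteq[\nu_l,\nu_k]$ and $[\alpha,\nu]$; this is a somewhat delicate bookkeeping argument. You instead observe that $\sigma(T)$ is connected and contains $\nu$, so it suffices to put $\star\nu$ in $\sigma(T)$: immediate from $\sigma^{p-1}\nu=\star\nu$ in the $\star$-periodic case, and in the non-periodic case from the fact that $\star\nu$ is not an endpoint (Theorem~\ref{Thm:Hubbard_tree}\eqref{Item:HubEndpoints}) so some component $K$ of $T\setminus\{\star\nu\}$ avoids $\nu$, and density of precritical points (Theorem~\ref{Thm:Hubbard_tree}\eqref{Item:PrecDenseHub}) yields a precritical $\rho$ of depth $k\ge 2$ in $K$ with $\sigma(\sigma^{k-2}\rho)=\star\nu$. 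This is cleaner and avoids the $\alpha$-gymnastics; it does however lean on part~\eqref{Item:HubEndpoints} to rule out $\star\nu$ as an endpoint, a dependency worth making explicit but entirely legitimate since Theorem~\ref{Thm:Hubbard_tree} is proved earlier.
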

\begin{proof}
\eqref{it:cont_surj} Continuity is shown in Lemma~\ref{Lem:Hcont}. To show surjectivity, note from the definition of the Hubbard tree that is suffices to show that $[\star\nu,\nu]\in \sigma(T)$. We distinguish whether there exists an image $\nu_k$ of $\nu$ (which may be $\nu$ itself in the periodic setting) that starts with $\1$. If this is the case, let $\nu_l$ be an image of $\nu$ that does not start with $\1$. By Lemma~\ref{Lem:alpha}, the fixed point $\alpha \in [\star\nu,\nu]$ and we may assume that $\nu_k$ lies on or branches off inside $[\alpha, \nu]$, otherwise take $\sigma(\nu_k)$ instead. Now $[\star\nu, \alpha] \subset [\nu_l, \nu_k] \subset \sigma(T)$ and hence also $[\alpha, \nu] \subset \sigma(T)$. In the other case, just note that $[\star\nu, \nu] \subset [\nu_3, \nu] = \sigma([\star\nu, \nu_2]) \subset \sigma(T)$. 

Together with the fact that $\sigma$ is injective on all connected subsets not containing $\star\nu$ (see Theorem~\ref{Thm:Hubbard_tree}), we also get \eqref{it:local_homeo}.  The theorem shows \eqref{it:endpoints} too. \eqref{it:inverse_im} is clear as this even holds in $X_\nu$. By construction, all intervals (including Fatou intervals) must contain a precritical point, this shows \eqref{it:expansivity}.
\end{proof}

\begin{corollary}
 Let $\nu$ be a periodic or preperiodic kneading sequence. The Hubbard tree $\Hub(\nu)$ as defined in Definition~\ref{Def:Htree} coincides as dynamcial system with the established notions of the Hubbard tree. 
\end{corollary}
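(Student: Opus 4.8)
The plan is to show that for periodic or preperiodic $\nu$, the abstract Hubbard tree $\Hub(\nu)$ satisfies the axioms of the classical Hubbard tree, and then invoke the known uniqueness of the classical Hubbard tree as a topological dynamical system (Douady--Hubbard \cite{Orsay}, Poirier \cite{Poirier}). Concretely, the classical Hubbard tree of a postcritically finite unicritical polynomial is characterized (up to conjugacy of the dynamics on the tree) by: being a finite topological tree carrying a continuous, degree-$d$ branched self-map that is injective away from one critical point, whose endpoints lie on the critical orbit, that is expansive, and whose postcritical set spans the tree. Theorem~\ref{Thm:tree_properties} already supplies continuity, surjectivity, the bound of $d$ inverse images, local homeomorphism away from the critical point, endpoints on the critical orbit, and expansivity; Theorem~\ref{Thm:Hubbard_tree} supplies that $\Hub(\nu)$ is genuinely a tree and, in the (pre)periodic case, a \emph{finite} one with endpoints exactly among $\nu,\sigma\nu,\dots$.

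First I would record that when $\nu$ is periodic or preperiodic, $\Hub(\nu)$ is finite: if $\nu$ is $\star$-periodic of period $p$ then $\Hub(\nu)=H_{p-1}$ by the remark following Lemma~\ref{Lem:Hn}, and if $\nu$ is strictly preperiodic with preperiod $m$ and period $p$ then the orbit of $\nu$ has exactly $m+p$ points, the union $\bigcup_n\sigma^n(C_\nu)$ stabilizes after finitely many steps (each new branch added in the proof of Lemma~\ref{Lem:Hn} is attached at an old point and terminates at a new postcritical point, of which there are only finitely many), so $\Hub(\nu)=H_N$ for $N=m+p$ or so. Next I would check that the postcritical points span $\Hub(\nu)$, i.e.\ it is the convex hull of $\{\nu,\sigma\nu,\dots\}$: by Theorem~\ref{Thm:Hubbard_tree}\eqref{Item:HubEndpoints} all endpoints are postcritical, and a finite tree is the convex hull of its endpoints, so spanning is automatic. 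Finally I would verify the local model at the critical point has the correct degree $d$: by Lemma~\ref{Lem:CriticalValueEndpoint} the critical value is an endpoint, so $\star\nu$ has at most $d$ arms, and the itinerary partition shows $\sigma$ is exactly $d$-to-one near $\star\nu$ onto a neighborhood of $\nu$, matching $z\mapsto z^d$; away from $\star\nu$ it is a local homeomorphism by Theorem~\ref{Thm:tree_properties}\eqref{it:local_homeo}.

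With these axioms in hand, the argument concludes by citing the classification: a postcritically finite unicritical polynomial of degree $d$ with a given kneading sequence (equivalently, a given combinatorial type of the critical orbit) has, by \cite{Orsay,Poirier}, a Hubbard tree unique up to conjugacy of the dynamics, characterized exactly by the axioms just verified; since our $\Hub(\nu)$ is built purely from $\nu$ and the critical orbit of $\sigma$ on $\Hub(\nu)$ has itinerary $\nu$ by construction of the itinerary partition, it must be conjugate to the classical one. One should also note here that the Fatou intervals inserted in the $\star$-periodic case do not affect the topological type: the classical Hubbard tree of a periodic unicritical polynomial contains, inside each bounded Fatou component along the tree, a regulated arc, and collapsing all of them yields a topologically conjugate tree; our construction keeps analogous arcs (the Fatou intervals) rather than collapsing them, so the two are conjugate either way, as already discussed in the text following Lemma~\ref{Lem:limitP}.

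I expect the main obstacle to be matching conventions precisely: the classical Hubbard tree is often defined with extra structure (the embedding in the plane, the cyclic order of arms at branch points, the markings inside Fatou components), while our object is purely topological-dynamical, so one must be careful to claim conjugacy only at the level asserted --- as a dynamical system, i.e.\ a tree with a self-map --- and to point out that the cyclic order data, which our construction deliberately ignores (cf.\ the remark that the construction is compatible with arbitrary permutations of $A_d$), is recovered only up to the same ambiguity that the kneading sequence leaves. A secondary, more bookkeeping obstacle is treating the strictly preperiodic case on the same footing as the $\star$-periodic one, since Definition~\ref{Def:Htree} gives the topology differently in the two cases; here Lemma~\ref{Lem:top_equiv} is exactly what reconciles them, so I would lean on it to assert that in all finite cases the topology is the intrinsic one of a finite tree, after which the identification with the classical tree is routine.
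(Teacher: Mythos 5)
Your proposal follows essentially the same strategy as the paper: verify that $\Hub(\nu)$ satisfies the defining properties of a Hubbard tree (which is exactly what Theorem~\ref{Thm:tree_properties} packages) and then appeal to a uniqueness theorem. The paper's proof is just two sentences doing precisely this. The one substantive difference is \emph{which} uniqueness result you invoke: you ground the identification in the classification of postcritically finite polynomials (Douady--Hubbard, Poirier), whereas the paper cites the purely combinatorial uniqueness of abstract Hubbard trees with a given kneading sequence (Definition~2.2 and Corollary~4.20 of \cite{HenkDierkTreesAdmissible}). The paper's choice is the more robust one here: Poirier's classification presupposes an actual polynomial, hence a complex-admissible kneading sequence, while the abstract uniqueness statement applies to every (pre)periodic $\nu$, admissible or not --- and the paper explicitly wants the corollary to cover the non-admissible case, where the ``established notion'' is the abstract tree of \cite{BKS,HenkDierkTreesAdmissible}. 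If you route through \cite{Orsay,Poirier} you should either restrict the claim to admissible $\nu$ or add the abstract uniqueness reference for the rest. Your additional verifications (finiteness in the preperiodic case, spanning by the postcritical set, the local degree at $\star\nu$, and the remark that Fatou intervals do not change the conjugacy class) are all correct and are implicitly subsumed in the paper's citation of Theorem~\ref{Thm:tree_properties} and Lemma~\ref{Lem:top_equiv}, so they cost nothing but are not needed beyond what the cited definition requires.
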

\begin{proof}
    Theorem~\ref{Thm:tree_properties} states that the Hubbard tree in our construction is considered a Hubbard tree according to Definition 2.2. in \cite{HenkDierkTreesAdmissible}. By Corollary 4.20 in \cite{HenkDierkTreesAdmissible}, the tree with those mentioned properties is unique. 
\end{proof}

Of course every tree, finite or not, can be embedded into the plane. It is, however, not always possible to embed the tree so that the dynamics respects the embedding. The classical counterexample is the tree for kneading sequence $\ovl{\1\0\1\1\0\star}$ of period $6$: it has a periodic orbit of period $3$ for which the first return map fixes one local arm and permutes the other, and this is impossible to embed into $\C$ so that the dynamics respects the cyclic order (or, equivalently, the enbedding into the plane). 

Branch points are (pre)-periodic or pre-critical, but preperiodic and precritical branch points are never an obstruction to embeddability because their embedding can always be made consistent by taking (iterated) preimages from periodic branch points or from the critical point, respectively.

\begin{proposition}[Embedding of tree into $\C$]
\label{Prop:embedding}
The Hubbard tree $\Hub(\nu)$ can be embedded into $\C$ in such a way that the dynamics respects the cyclic order at periodic branch points if and only if all periodic branch points are of satellite type.
\end{proposition}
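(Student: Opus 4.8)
The plan is to recast the statement as a purely combinatorial fact about ribbon structures. An embedding of the countable tree $\Hub(\nu)$ into $\C$ is, up to isotopy of $\C$, the same datum as a choice of circular order on the set of arms at each branch point; since a tree contains no cycles there is no genus obstruction, so \emph{every} such choice of circular orders is realised by an embedding into $\C$. Under this dictionary, an embedding ``respects the cyclic order at periodic branch points'' precisely when, for every periodic branch point $\tau$, the map induced by $\sigma$ from the arms at $\tau$ to the arms at $\sigma(\tau)$ preserves the (relative) circular order. By Lemma~\ref{Lem:ExistenceCharacteristic}\eqref{Item:NumberArmsFinite} each periodic branch point has finite valence, so these constraints only involve finite circular orders, and by Theorem~\ref{Thm:NoWanderingBranchPoint} every branch point is preperiodic or precritical, so no other points contribute constraints. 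Finally, a periodic branch point that is not precritical lies on a satellite or evil orbit by Corollary~\ref{Cor:orbits} (the primitive and endpoint types have valence at most two), while a precritical branch point never obstructs: at the critical point $\sigma$ collapses all of its (at most $d$) arms onto the single arm at the endpoint $\nu$, so that near $\star\nu$, and after pulling circular orders back along $\sigma$ to precritical preimages, there is nothing to preserve. The same remark applies to the whole critical orbit when it is periodic, since each of its points is then precritical.

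\emph{Necessity.} Suppose some periodic branch point is of evil type; let $\tau$ be the characteristic point of its orbit, of period $p$ and valence $k\ge 3$. The orbit of $\tau$ avoids the critical point (Corollary~\ref{Cor:orbits}), so $\sigma$ is a local homeomorphism at each of its points and the first return map $\sigma^p$ is a local homeomorphism near $\tau$. In a dynamics-respecting embedding, $\sigma^p$ therefore induces a circular-order-preserving permutation of the $k$ arms at $\tau$, that is, an element of the rotation group $\Z/k\Z$. But by the definition of the evil type together with Lemma~\ref{Lem:ExistenceCharacteristic}\eqref{it:loc_ret}, $\sigma^p$ fixes the arm towards the critical point and cycles the remaining $k-1\ge 2$ arms transitively; a rotation that fixes an arm is the identity, a contradiction. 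Hence no dynamics-respecting embedding exists, which proves the ``only if'' direction.

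\emph{Sufficiency.} Assume there is no evil periodic orbit, so every periodic branch point that is not precritical is of satellite type. Assign a circular order to every branch point as follows. On a satellite periodic orbit $\tau_1,\dots,\tau_p$, with $\tau_1$ the characteristic point, Lemma~\ref{Lem:ExistenceCharacteristic} gives constant valence $k$ along the orbit, and by Lemma~\ref{Lem:local_arm} each one-step arm map is a bijection; the first return map $\sigma^p$ acts on the arms at $\tau_1$ as a single $k$-cycle $\pi$. Choose the circular order $\omega_1$ at $\tau_1$ that lists the arms as $a,\pi a,\dots,\pi^{k-1}a$; then $\pi$ is the standard rotation, so $\omega_1$ is $\sigma^p$-invariant. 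Transport $\omega_1$ forward by $\sigma$ to obtain circular orders $\omega_i$ at $\tau_i$; each one-step arm map is then order-preserving, and the cycle closes up consistently exactly because $\sigma^p$ preserves $\omega_1$. At every branch point not on a periodic orbit the embeddability condition imposes nothing, so make an arbitrary choice there (for precritical branch points, including those of a periodic critical orbit, one may instead pull circular orders back from a free choice at the critical point, as above). Assembling all these circular orders into a ribbon structure produces an embedding of $\Hub(\nu)$ into $\C$ in which, by construction, $\sigma$ respects the circular order at every periodic branch point.

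The main obstacle is not depth but bookkeeping: one must check that these local choices are mutually compatible --- there is no circular dependency because the only genuinely constrained points are the periodic branch points, each of which is assigned an order from its own return dynamics, while the critical point's order stays free and all remaining choices are arbitrary --- and one must confirm that a ribbon structure on $\Hub(\nu)$ really yields a topological embedding into $\C$ even when the tree is infinite or, for $d=\infty$, has precritical branch points of infinite valence; here one uses that $\Hub(\nu)$ is countable and that a ribbon structure on a tree is always planar, arranging the countably many arms at an infinite-valence point to accumulate so that the whole tree fits inside $\C$.
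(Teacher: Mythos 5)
Your proof is correct and follows the same route as the paper's very brief proof (which simply asserts that evil type obstructs embedding and observes that rotation number $1/q$ works for satellite type); you make explicit what the paper leaves implicit, in particular the necessity argument that a circular-order-preserving first-return map at an evil characteristic point would be a rotation fixing an arm, hence the identity, contradicting $k-1\ge 2$. The extra discussion of precritical and preperiodic branch points and of infinite-valence ribbon structures corresponds to the paper's informal remarks in the paragraph preceding the proposition and is handled in the same spirit.
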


In other words, in view of Lemma~\ref{Cor:orbits} the only obstruction for a tree to be dynamically embeddable into $\C$ is the existence of evil periodic orbits.

\begin{proof}
The existence of a single branch point of evil type certainly makes it impossible to embed the tree consistently into $\C$. If all periodic branch points are of satellite type, then all local arms are permuted transitively by the first return map, and one possible embedding is so that the first return maps sends every local arm to the next one in counterclockwise order (so that the combinatorial rotation number is $1/q$ where $q$ is the number of arms at the periodic orbit). This embedding is clearly consistent.
\end{proof}

In \cite[Section~3]{HenkDierkTreesAdmissible}, \cite[Section~5]{SymDyn} a formula is given for the number of dynamically consistent embeddings: if $q_i$ are the number of arms at periodic branch orbits (indexed by $i$) and $\phi$ denotes the Euler $\phi$ function (so that $\phi(n)$ counts the numbers in $\{1,\dots,n\}$ coprime to $n$), then the number of embeddings is $\prod_i \phi(q_i)$. In \cite{HenkDierkTreesAdmissible} the number of branch orbits is always finite, but the formula applies in our case as well, yielding an infinite value when there are infinitely many branch orbits.

\begin{remark}
For exponential maps, the combinatorics has been investigated in \cite{Escaping,LasseDierkCombinatorics,LasseDierkInventiones}. In particular, there is a growth condition on sequences that describe external angles: a sequence in $(a_n)\in\Z^\N$ has to be \emph{exponentially bounded} in order to be realized as external address (external angle) of an exponential map: that means there exists an $x\in\R$ such that $|a_n|$ is bounded by the $n$-th iterated exponential of $x$. The growth condition for an external angle is the same as for the associated kneading sequence, so in order for a kneading sequence with $d=\infty$ to be realized by a complex exponential map it also has to be exponentially bounded. We keep this discussion brief here because no such restriction exists for our trees: they exist for every kneading sequence in $\Z^\N$ without any growth condition. 

In fact, as abstract trees without embedding any permutation of the symbol set $\Z$ will lead to an isomorphic tree. Therefore, the first $n$ entries in $\nu$ will use no more than $n$ symbols, so every tree can also be realized by a linearly bounded kneading sequence. 
\end{remark}

\begin{figure}[htbp]
\hspace*{0cm}
\includegraphics[width=1.0\textwidth]{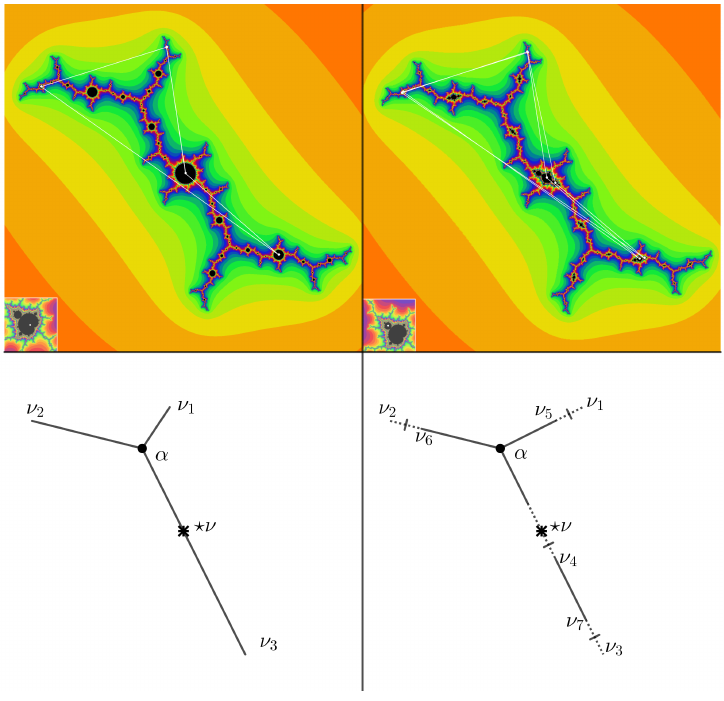}
\caption{\textit{Top left:} A Julia set with kneading sequence $\nu=\ovl{\1\1\0\star}$ with the critical orbit highlighted. \textit{Top right:} The Julia set of its bifurcation $\tilde\nu=\ovl{\1\1\0\0\;\1\1\0\star}$. \textit{Bottom left:} The (abstract) Hubbard tree of $\nu$. \textit{Bottom right:} The Hubbard tree of $\tilde\nu$. Several prominent Fatou intervals are represented by dotted lines, bars between them show the orbit of the central itinerary $\ovl{\1\1\0\0}$.  }
\label{Fig:julia_tree}
\end{figure}

\Newpage

\section{Properties and examples}
\label{Sec:examples}

In this section, we discuss various properties that Hubbard trees may have, and that they are independent of each other: in particular recurrence of $\nu$ and the existence of infinitely many endpoints. A kneading sequence is \emph{recurrent} if the critical value is a limit point of its forward orbit. 
For simplicity, we give examples only in the simplest non-trivial case of degree $2$. It is well known that every kneading sequence $\nu$ in degree $2$ can be equivalently encoded in terms of an \emph{internal address} $I(\nu)$: this is a strictly increasing, finite or infinite sequence of integers $S_0\to S_1\to ...$ that is recursively defined as follows: 

\Newpage

\begin{itemize}
\item $S_0=1$, $\varsigma_0:=\ovl \1$. 
\item If $\diff(\varsigma,\varsigma_n)=\infty$, then $I(\varsigma)$ is finite and its last entry is $S_n$.
\item Otherwise, set $S_{n+1}=\diff(\varsigma,\varsigma_n)$. Let $\varsigma_{n+1}$ be the unique $S_{n+1}$-periodic sequence in $\1A_2^\infty$ that first differs from $\varsigma_{n}$ at position $S_{n+1}$.
\end{itemize}

So a kneading sequence has finite internal address if and only if it is $\star$-periodic. We can also define internal addresses of infinite sequences over $A_d$. For example, both the kneading sequence $\ovl{\1\0\star}$ and the sequence $\ovl{\1\0\0}$ have a finite internal address $1\to 2 \to 3$, but the sequence $\ovl{\1\0\1}$ has an infinite internal address starting with $1\to 2 \to 4 \to 5 \to 7 \to 8 \to...$. 

Before giving examples of Hubbard trees, we present how the internal address is useful to classify bifurcations.

\subsection{Classification of bifurcations} 

\begin{lemma}[Classification of Fatou intervals]
\label{Lem:Fatou}
For each $\star$-periodic sequence $\nu$ there exists a unique itinerary $\omega \in A_d^\infty \cap X_\nu$ such that $[\nu, \omega]$ is a Fatou interval on the critical path of $\nu$. All other Fatou intervals of $\Hub(\nu)$ then have the form $[w\star\nu, w\mathtt e\omega]$, where $w\star\nu$ is a precritical point and $\mathtt e\in A_d$.
\end{lemma}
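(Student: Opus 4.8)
The statement has two parts: existence and uniqueness of a distinguished Fatou interval $[\nu,\omega]$ attached at the critical value, and the identification of all other Fatou intervals as the images/preimages of this one under the combinatorics. The natural strategy is to read off $\omega$ from the construction in Section~\ref{Sub:LimitsFatouIntervals}: the Fatou intervals attached at $\nu$ arise either because $\nu\in M_\nu$ duplicates a limit point of $P_\infty$, or (if $\nu$ is a bifurcation) because $\nu$ is the left endpoint of a gap whose central itinerary we glue in. In both cases the relevant partner is a sequence $\omega\in A_d^\infty\cap X_\nu$ with $\diff(\nu,\omega)=\infty$. First I would show there is exactly one such $\omega$ lying ``adjacent'' to $\nu$ on the critical path: since $\nu$ is $\star$-periodic of period $p$, the condition $\diff(\nu,\omega)=\infty$ forces $\omega$ to agree with $\nu$ off the positions that are multiples of $p$, i.e. $\omega$ is obtained from $\ovl{\nu_1\dots\nu_{p-1}\star}$ by replacing each $\star$ with a fixed symbol $\mathtt e\in A_d$; among the at most $d$ such candidate sequences in $X_\nu$, exactly one is the immediate neighbor of $\nu$ in the order $\prec$ on $C_\nu$ (the others, if in $X_\nu$ at all, are separated from $\nu$ by precritical points, as in the argument of Lemma~\ref{Lem:no_gaps} that the shift is never a multiple of $p$). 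This pins down $\omega$ and the symbol $\mathtt e$.

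\textbf{Key steps.} (1) Characterize sequences $a\in X_\nu\cap A_d^\infty$ with $\diff(a,\nu)=\infty$: they are precisely the $\omega_{\mathtt e}:=\ovl{\nu_1\dots\nu_{p-1}\mathtt e}$ that survive the restriction defining $X_\nu$ (no forbidden suffix $w\nu$). (2) Show exactly one of them, call it $\omega$, is order-adjacent to $\nu$ on $C_\nu$: use the recursive construction of $P_\infty$ together with the fact (from the proof of Lemma~\ref{Lem:no_gaps}) that the shift between adjacent precritical points is never a multiple of $p$, so any $\omega_{\mathtt e}\neq\omega$ is separated from $\nu$ by a precritical point of finite depth. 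Hence $[\nu,\omega]$ is the unique Fatou interval touching $\nu$ on the critical path. (3) Now propagate: every Fatou interval of $\Hub(\nu)$ lies in some $\sigma$-iterate of the critical path; by the construction, new Fatou intervals are inserted exactly at gaps between precritical points $w\star\nu\prec w'\star\nu$ with $\diff=\infty$, or at duplicated limit points $w\star\nu$ vs.\ $a\in M_\nu$. Pulling back the relation $\diff(\nu,\omega)=\infty$ through $\sigma$: a gap or duplication at $w\star\nu$ of depth $|w|+1$ maps forward under $\sigma^{|w|+1}$ to the configuration at $\nu$, so the Fatou interval there is $\sigma^{-(|w|+1)}$ of $[\nu,\omega]$ restricted to the branch determined by $w$ and the choice of first symbol $\mathtt e$ after $w$; this gives $[w\star\nu, w\mathtt e\omega]$ with $\diff(w\star\nu,w\mathtt e\omega)=\infty$. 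Conversely every such interval $[w\star\nu,w\mathtt e\omega]$ does occur, since $\sigma^{|w|+1}$ maps it to $[\nu,\omega]$ and the critical path's Fatou intervals are exactly the $\sigma$-preimages of the ones attached at postcritical points.

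\textbf{Main obstacle.} The delicate point is step (2)–(3): showing that the \emph{only} Fatou intervals are these, i.e. that no Fatou interval is ``accidentally'' attached at a non-precritical point or involves a sequence other than a shift of $\omega$. This requires carefully tracking that the two mechanisms producing Fatou intervals (gaps from bifurcations, and duplicated $M_\nu$-limits) both reduce, under forward iteration by the depth, to the single configuration at $\nu$ — and that the central itinerary of a gap coincides with the appropriate $w\mathtt e\omega$. I expect one must invoke that the shift between the gap endpoints $w\star\nu,w'\star\nu$ is not a multiple of $p$ (so the gap ``projects'' cleanly to $[\nu,\omega]$ rather than to something degenerate) together with uniqueness of the central itinerary established just before Lemma~\ref{Lem:CritPathInterval}. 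Once the bijection between Fatou intervals and pairs $(w\star\nu,\mathtt e)$ with $\diff(w\star\nu,w\mathtt e\omega)=\infty$ is set up via forward iteration to $\nu$, the rest is bookkeeping.
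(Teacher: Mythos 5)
Your proposal is correct and its decisive step --- pushing an arbitrary Fatou interval $[w'',w\star\nu]$ forward by $\sigma^{|w|+1}$ to a Fatou interval attached at $\nu$ and identifying it with $[\nu,\omega]$ --- is exactly the paper's argument. Where you differ is in the first part: the paper disposes of existence and uniqueness of the interval at $\nu$ in one line by invoking the construction together with Lemma~\ref{Lem:CriticalValueEndpoint} ($\nu$ is an endpoint of $\Hub(\nu)$, so only one local arm, hence only one Fatou interval, can be attached there), whereas you re-derive adjacency combinatorially from the shift-not-a-multiple-of-$p$ argument of Lemma~\ref{Lem:no_gaps}. Your route is more self-contained but also slightly imprecise in step (1): a sequence $a\in X_\nu\cap A_d^\infty$ with $\diff(a,\nu)=\infty$ need only agree with $\nu$ off the positions that are multiples of $p$, so the $\star$'s could a priori be replaced by \emph{varying} symbols and the candidate set is larger than the periodic sequences $\omega_{\mathtt e}$; this does not hurt the proof, since uniqueness of the interval at $\nu$ follows anyway from $\nu$ being the top endpoint of $C_\nu$ (resp.\ an endpoint of the tree), and the actual $\omega$ produced by the construction (central itinerary or unique $M_\nu$-limit) is periodic. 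Finally, your converse claim that \emph{every} interval $[w\star\nu,w\mathtt e\omega]$ occurs is not part of the lemma and is not true without restricting to those $w$, $\mathtt e$ for which the interval actually lies on some $\sigma^n(C_\nu)$; it can simply be dropped.
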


\begin{proof}
According to the construction of Fatou intervals and since the critical value is an endpoint of $\Hub(\nu)$ (Lemma~\ref{Lem:CriticalValueEndpoint}), there is a unique Fatou interval with $\nu$ on the boundary, say $[\omega,\nu]$ where $\omega$ is a periodic sequence in $X_\nu$. 

Any other Fatou interval $[w'', w\star\nu]$ is mapped homeomorphically  under $\sigma^{|w|+1}$ to a Fatou interval containing $\nu$, and the only candidate is $[\omega,\nu]$. The claim follows.
\end{proof}

We call this sequence $\omega$ the \emph{lower sequence} of $\nu$. It differs from $\nu$ in the fact that all the symbols $\star$ are replaced consistently by the same symbol in $A_d$, and this symbol is uniquely determined by the fact that $\omega$ is a lower sequence.

\begin{definition}[Standard bifurcations]
A bifurcation $\nu$ with base sequence $\mu$ is called a \emph{standard bifurcation} if the period of $\mu$ occurs in the internal address of $\nu$. Otherwise, we call $\nu$ a \emph{non-standard bifurcation}.
\end{definition}

The only bifurcations that occur in complex dynamics are standard bifurcations: non-standard bifurcations always lead to evil periodic orbits that violate the embeddability of the tree into the plane in such a way that the the dynamics respects this embedding; see below.

In Figure~\ref{Fig:julia_tree} we give an example of a Hubbard tree for  $\nu=\ovl{\1\1\0\0\;\1\1\0\star}$ with internal address $1\to 3 \to 4 \to 8$, which is a standard bifurcation. We also show how the tree corresponds to the tree of the base sequence, and the associated Julia sets. In Figure~\ref{Fig:h_trees} we show an example of a non-standard bifurcation $\ovl{\1\0\1\;\1\0\star}$ with internal address $1\to 2 \to 4 \to 5 \to 6$, together with two more standard bifurcations.

\begin{proposition}[Classification of $\star$-periodic kneading sequences]
\label{Prop:bifurcations} 
Let $\nu$ be a $\star$-periodic kneading sequence with period $p$. Then the lower sequence $\omega$ is characteristic and
\begin{enumerate}
\item if $\nu$ is not a bifurcation, then $\omega$ has period $p$ and and all of its preimages have two branches; 
\item if $\nu$ is a standard bifurcation, then $\omega$ has the same period $q$ as the base sequence of $\nu$. The itinerary $\omega$ is a central itinerary and as such has $p/q$ branches and so that the orbit of $\omega$ is of satellite type;
\item if $\nu$ is a non-standard bifurcation, then $\omega$ also has the same period $q$ as the base sequence of $\nu$ but $\omega$ has $p/q+1$ branches and its orbit is of evil type.
\end{enumerate}
Central itineraries (the case where $k=\infty$ in Definition~\ref{Def:PrecPointsCritPath}) occur only in the case of standard bifurcations.
\end{proposition}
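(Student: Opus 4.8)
The plan is to treat the lower sequence $\omega$ as a periodic point of the Hubbard tree and run the local‑arm calculus of Lemma~\ref{Lem:ExistenceCharacteristic} at it, reading off the orbit type from Corollary~\ref{Cor:orbits}, and only at the end translate the satellite/evil dichotomy into the internal‑address condition. First I would show that $\omega$ is characteristic: by Lemma~\ref{Lem:Fatou} the interval $[\omega,\nu]$ is the unique Fatou interval of $C_\nu$ with $\nu$ on its boundary, so $\omega$ is the point of $C_\nu$ immediately preceding $\nu$ in the order $\prec$. Its orbit is finite and periodic, and it carries at least two arms (into $[\omega,\nu]$ toward $\nu$, and along $C_\nu$ toward $\star\nu$), hence is not an endpoint, so by Lemma~\ref{Lem:ExistenceCharacteristic}(1) its orbit has a characteristic point. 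That point must be the orbit point closest to $\nu$, and since nothing lies between $\omega$ and $\nu$ except the Fatou interval $[\omega,\nu]$ — whose interior is disjoint from $X_\nu$, while $\nu$ contains $\star$'s and no iterate of $\omega$ does — the characteristic point is $\omega$ itself. Write $r$ for the exact period of $\omega$, so the first return map of $\omega$ is $\sigma^r$.

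Next I would pin down the period and arm count. If $\nu$ is not a bifurcation, the paragraph after the definition of bifurcating sequences shows every symbol at position $p$ gives exact period $p$, so $r=p$. If $\nu$ is a bifurcation from period $q$ (standard or not), I would argue that $\omega$ equals the $q$‑periodic sequence agreeing with $\nu$ on its first $q$ entries, i.e. $r=q$: the precritical points of $C_\nu$ accumulating at $\omega$ from the $\star\nu$‑side agree with $\nu$ on arbitrarily many initial entries, so $\omega$ agrees with $\nu$ on every non‑$\star$ position; that the $\star$'s are filled by the distinguished symbol (and not by another one, which would equally give $\mathrm{diff}(\omega,\nu)=\infty$) is forced by $\omega$ having to lie in $X_\nu$ at the terminal end of an honest chain of precritical points, which makes $\omega$ exactly $q$‑periodic. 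Now let $L_\star$ and $L_\nu$ denote the local arms at $\omega$ toward the critical point and toward the critical value. By Lemma~\ref{Lem:ExistenceCharacteristic}(\ref{it:loc_char}) every local arm at an orbit point of $\omega$ eventually maps into the orbit of $L_\star$ or $L_\nu$; since $\sigma^r$ maps $[\omega,\nu]$ homeomorphically to $[\omega,\sigma^r\nu]$ and $\nu$ has exact period $p=(p/r)r$, the arm $L_\nu$ has period exactly $p/r$ under $\sigma^r$ (for $0<m<p/r$ the arm $\sigma^{mr}(L_\nu)$ points toward $\sigma^{mr}\nu\neq\nu$, a different arm, the $\nu$‑arm being just the Fatou interval). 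By Lemma~\ref{Lem:ExistenceCharacteristic}(\ref{it:loc_ret}), $\sigma^r(L_\star)\in\{L_\star,L_\nu\}$, and since the local‑arm map on the finite arm‑set of the orbit is a permutation, there are only three possibilities: (i) $r=p$, $L_\nu$ fixed, hence $L_\star$ fixed too, and a third fixed arm is excluded by Corollary~\ref{Cor:orbits}, so $\omega$ has exactly two branches (non‑bifurcation case); (ii) $\sigma^q(L_\star)=L_\nu$, so all arms lie on one transitively permuted $\sigma^q$‑cycle of length $p/q$ — satellite type with $p/q$ branches; (iii) $\sigma^q(L_\star)=L_\star$, so the critical‑point arm is fixed and the remaining $p/q$ arms are transitively permuted — evil type with $p/q+1$ branches. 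In case (i) the assertion on preimages follows since every $\sigma$‑preimage of an orbit point of $\omega$ carries at most two arms (injectivity of the local‑arm map, Lemma~\ref{Lem:local_arm}) and is never an endpoint, as it carries no $\star$ and is not postcritical.

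It then remains to identify alternative (ii) with standard bifurcations and alternative (iii) with non‑standard ones, and to show that central itineraries occur exactly in case (ii). The hard part will be that these are two faces of one combinatorial fact: a central itinerary at $\nu$ is the same thing as a gap, i.e. the occurrence of $k=\infty$ in Definition~\ref{Def:PrecPointsCritPath} for the pair immediately below $\nu$, and by Lemma~\ref{Lem:no_gaps} (and the uniqueness discussion after the definition of bifurcating sequences) such a gap forces $\nu$ to be a bifurcation with $\gcd(t,p)=q$, where $t$ is the shift between the precritical point just below $\nu$ and $\nu$. I would prove, by induction over the levels $P_n$ that track the recursion $t'\in\{s,t+s\}$ of Definition~\ref{Def:PrecPointsCritPath} and match it against the recursion defining $\varsigma_n$ and $S_n$, that a shift $t$ with $\gcd(t,p)=q$ arises in the construction exactly when $q$ is an entry of the internal address $I(\nu)$ — equivalently exactly when $\nu$ is a standard bifurcation. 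When it does, the sequence strictly between the two gap partners is $q$‑periodic and coincides with $\omega$ (so $\omega$ is a central itinerary), and the $q$‑periodic arm $L_\star$ that produces the gap is carried by $\sigma^q$ onto $L_\nu$, giving alternative (ii). When $q\notin I(\nu)$, no such shift occurs, $\phi(P_\infty)$ is dense, no gap and no central itinerary appear, $\omega$ arises instead as a limit in $M_\nu$, and $\sigma^q$ fixes $L_\star$, giving alternative (iii).

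The main obstacle, then, is exactly that inductive bookkeeping: converting the purely symbolic recursion for $P_n$ into the statement about internal addresses, and verifying that the \emph{first} gap one meets (the one closest to $\nu$) has $\gcd$‑value precisely $q$ and not a proper divisor of $q$. Everything else — characteristicity of $\omega$, the arm count, the trichotomy of orbit types, and the preimage statement — follows fairly directly from Lemmas~\ref{Lem:local_arm} and~\ref{Lem:ExistenceCharacteristic} together with Corollary~\ref{Cor:orbits}, once the period $r$ of $\omega$ has been identified.
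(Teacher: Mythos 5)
Your structural groundwork is sound and in places cleaner than the paper's: identifying $\omega$ via Lemma~\ref{Lem:Fatou} as the point of $C_\nu$ adjacent to $\nu$, deducing characteristicity from Lemma~\ref{Lem:ExistenceCharacteristic}, computing that $L_\nu$ has exact period $p/r$ under the first return map, and then using Lemma~\ref{Lem:ExistenceCharacteristic}\eqref{it:loc_ret} together with injectivity of the local-arm map to extract the clean trichotomy ``primitive with $2$ arms / satellite with $p/r$ arms / evil with $p/r+1$ arms'' is essentially the same calculus the paper runs, just packaged as one abstract argument instead of three parallel case analyses.

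However, there is a genuine gap, and you have located it yourself: the entire content distinguishing items (2) and (3) --- namely that a gap ($k=\infty$ adjacent to $\nu$, hence a central itinerary) occurs \emph{exactly} when $q$ appears in the internal address $I(\nu)$, and that the gap forces $\sigma^q(L_\star)=L_\nu$ while its absence forces $\sigma^q(L_\star)=L_\star$ --- is deferred to an unexecuted ``inductive bookkeeping'' matching the recursion $t'\in\{s,t+s\}$ against the recursion for $S_n$. Without this step you have only proved that a bifurcation is \emph{either} satellite with $p/q$ arms \emph{or} evil with $p/q+1$ arms, not which alternative corresponds to standard versus non-standard, nor the final assertion about when central itineraries occur. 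The paper closes this gap much more directly and without any induction over the levels $P_n$: if $q\in I(\nu)$, the construction of the internal address supplies a precritical point $\rho$ of depth $q$ on the critical path agreeing with $\nu$ for $q$ entries, and the bifurcation condition then gives $\diff(\nu,\rho)=\infty$, i.e.\ a gap, with $\omega$ its central itinerary and $\sigma^q([\rho,\omega])=[\nu,\omega]$ sending $L_\star$ to $L_\nu$; if $q\notin I(\nu)$, it shows no multiple of $q$ can occur in $I(\nu)$ either (else $\nu$ would be a standard bifurcation from that larger period), so every precritical point of depth a multiple of $q$ is screened off from $\nu$ by one of strictly smaller depth, no gap forms, $\omega$ arises as a point of $M_\nu$, and the $\star\nu$-side arm, which contains precritical points, cannot map onto the Fatou-interval arm $L_\nu$, so $\sigma^q$ fixes $L_\star$. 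A secondary soft spot is your identification of the exact period $r$ of $\omega$ with $q$ in the bifurcation cases (``forced by $\omega$ having to lie in $X_\nu$ at the terminal end of an honest chain of precritical points''): the paper derives $r=q$ case by case from $\sigma^q([\rho,\omega])=[\nu,\omega]$, respectively from $\diff(\nu,\sigma^q\nu)=\infty$ plus $\nu$ being an endpoint, and you would need one of these arguments to make that step rigorous.
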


This proposition has several interesting implications. Firstly, for $\star$-periodic sequences that are not bifurcations, no preimage of $\omega$ is a branch point. So the introduction of Fatou intervals is indeed ``optional'' in the sense that collapsing them does not alter the tree structure, in contrast to bifurcations. Secondly, kneading sequences that arise from non-standard bifurcations cannot be embedded in a way that respects the dynamics (see Proposition~\ref{Prop:embedding}). Thus, these sequences are not complex-admissible. 

\begin{proof}
Let us first consider the case that $\nu$ is not a bifurcation. Since $\diff(\nu,\omega)=\infty$ and no image of $\sigma^k \nu$ other than $\nu$ itself satisfies $\diff(\nu, \sigma^k \nu)=\infty$, the period of $\omega$ must be a multiple of $p$. And since $\nu$ is an endpoint of the Hubbard tree, it follows that the period has to be exactly $p$. So the local arm at $\omega$ towards $\nu$ is mapped to itself under the first return return map. By Corollary~\ref{Cor:orbits}, the orbit of $\omega$ is of primitive type. So it and all of its preimages have two branches and $\omega$ is clearly characteristic. 

Let us now consider the case that $\nu$ is a standard bifurcation. By construction of the internal address, there exists a precritical point $\rho$ of depth $q$ that coincides with $\nu$ for at least $q$ entries. Since $\nu$ is a bifurcation, $\diff(\nu,\rho)=\infty$. In other words, the case $k=\infty$ in Definition~\ref{Def:PrecPointsCritPath} occurs. The central itinerary then has to be the lower sequence $\omega$ and it is clearly characteristic. Since $[\omega, \rho]$ must be mapped to $[\nu, \omega]$ under $\sigma^q$, the period of $\omega$ is $q$. As central itinerary, all branches at $\omega$ contain Fatou intervals with boundary point $\omega$, so by iterating $[\nu, \omega]$ any multiple of $q$ times, we obtain all $p/q$ branches at $\omega$. They are cyclically permuted, so the orbit of $\omega$ is of satellite type. 

Now let $\nu$ be a non-standard bifurcation. If the internal address of $\nu$ contains any multiple $kq$ for $k\ge 2$ than there would have to be a precritical point  $\rho$ of depth $kq$ coinciding with $\nu$ for $kq$ entries. But just as in the previous case, this would imply that $\nu$ is a standard bifurcation with base sequence having period $kq$ instead. Hence, between the critical value and any precritical point on the critical path with depth a multiple of $q$, there lies a precritical point of lower depth. We can conclude that all precritical points $\rho$ (different from $\nu$) satisfy $\diff(\nu, \rho)<\infty$. Thus, $\nu$ is not adjacent to a central itinerary and by Lemma~\ref{Lem:Fatou}, no central itineraries occur anywhere in the Hubbard tree. 

Thus, the lower sequence $\omega$ occurs as an element of $M_\nu$.  Since $\diff(\nu, \sigma^q \nu)=\infty$ and $\nu$ is an endpoint of the Hubbard tree, we see that $\omega$ must be fixed by $\sigma^q$, so its period is $q$. By iterating $[\nu, \omega]$, we further see that $\omega$ has $p/q \ge 2$ branches of the form $[\sigma^{kp}\nu, \omega]$ that are permuted cyclically by $\sigma^q$, as well as one additional branch towards the critical point. By Corollary~\ref{Cor:orbits}, those are all branches at $\omega$ and the orbit of $\omega$ is of evil type.
\end{proof}

\subsection{A trip to the zoo (i.e.\ to the woods): examples of trees}

Let us start with a trivial observation.

\begin{example}[Non-recurrent, finite tree]
Every strictly preperiodic kneading sequence produces a finite tree, and clearly it is non-recurrent.
\end{example}

There are many sequences that are non-recurrent and produce finite trees, but unlike preperiodic ones they do not need to have a finite critical orbits:

\begin{example}[Non-recurrent, finite tree, postcritically infinite] 
Let $\{w_i\}_{i\in \N}$ be an infinite sequence of words $w_i\in \{\1\1, \1\0\1\}$ such that the kneading sequence $\nu=\1\0\0\1w_1w_2w_3...$ is not (pre)periodic. Then $\nu$ is non-recurrent and has a finite tree, yet containing infinitely many distinct postcritical points. 
\end{example}

Since $\nu$ is the only postcritical point starting with $\1\0\0$, the sequence is non-recurrent. It can be recursively constructed as follows: 

starting with $\nu_0=\ovl{\1\0\0\star}$, given a $\star$-periodic kneading sequence $\nu_k$ with period $p_k$, we follow the internal address of the lower sequence of $\nu_k$ until the entry $p_{k+1}:=p_k+|w_{k}|$. Then $\nu_{k+1}$ is the associated $\star$-periodic kneading sequence with period $p_{k+1}$. The associated parameter of $\nu$ in the Mandelbrot set is then the limit of increasing, real parameters of $\nu_k$ and thus real as well. Hence, its Hubbard tree is an interval, which is clearly finite.

\begin{lemma}[Recurrent kneading sequence via internal address]
If an internal address $1\to S_1\to S_2\dots$ has the property that $\limsup (S_{k+1}-S_k)\to\infty$ then the associated kneading sequence is recurrent.
\label{Lem:recurrent}
\end{lemma}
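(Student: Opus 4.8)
The plan is to translate recurrence of $\nu$ into a combinatorial statement about self-repetitions of $\nu$, and then to extract those repetitions directly from the recursion defining the internal address.

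First I would record what recurrence means in this setting. The hypothesis $\limsup_k(S_{k+1}-S_k)=\infty$ forces the internal address to be infinite, so $\nu$ is not $\star$-periodic; hence $\nu\in A_2^\infty$ is a non-periodic kneading sequence, and $\Hub(\nu)$ carries the subspace topology of $X_\nu$ (Definition~\ref{Def:Htree}), with the nested sets $U_k(\nu)$ of~\eqref{Eq:Neighborhood_Uk} as a neighborhood basis at $\nu$. Because $\nu$ is non-periodic, each orbit point $\sigma^m\nu$ with $m\ge1$ is a sequence over $A_2$ that is not of the form $w\nu$ with $w$ a non-empty word, so it lies in $X_\nu$ (indeed in $\Hub(\nu)$); moreover it contains no symbol $\star$, so it is not a precritical point, and it differs from $\nu$. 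Therefore $\sigma^m\nu\in U_k(\nu)$ if and only if $\diff(\sigma^m\nu,\nu)\ge k$, the removed set $\partial_k(\nu)$ being irrelevant for it. It follows that $\nu$ is a limit point of its forward orbit $\{\sigma^m\nu:m\ge1\}$, i.e.\ that $\nu$ is recurrent, if and only if $\limsup_{m\to\infty}\diff(\sigma^m\nu,\nu)=\infty$.

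Next I would produce the needed repetitions. By construction $\varsigma_n$ is $S_n$-periodic, and since the internal address is infinite we have $\diff(\nu,\varsigma_n)=S_{n+1}$ for every $n$, so $\nu$ and $\varsigma_n$ agree on positions $1,\dots,S_{n+1}-1$. Hence for every $j$ with $1\le j\le S_{n+1}-1-S_n$ both $j$ and $S_n+j$ lie in that range, and the $S_n$-periodicity of $\varsigma_n$ yields $\nu_{S_n+j}=(\varsigma_n)_{S_n+j}=(\varsigma_n)_j=\nu_j$. Thus $\sigma^{S_n}\nu$ and $\nu$ agree on their first $S_{n+1}-1-S_n$ entries, giving $\diff(\sigma^{S_n}\nu,\nu)\ge S_{n+1}-S_n$. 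Taking $\limsup$ along the subsequence $m=S_n$ and invoking the hypothesis, $\limsup_{m\to\infty}\diff(\sigma^m\nu,\nu)\ge\limsup_n(S_{n+1}-S_n)=\infty$, which by the previous paragraph is precisely recurrence of $\nu$.

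The argument is short, and I do not expect a genuine obstacle. The one place needing care is the reduction in the second paragraph: one must check that the orbit points $\sigma^m\nu$ ($m\ge1$) really do lie in $X_\nu$ (this uses non-periodicity of $\nu$), are never precritical, and never coincide with $\nu$, so that neither the glueing set $\partial_k(\nu)$ nor the special treatment of preimages of $\nu$ in $X_\nu$ interferes with the neighborhood bookkeeping along the orbit. Everything after that is immediate from the defining recursion of the internal address, using only that it is infinite and strictly increasing (so that the prefixes above are nonempty once $S_{n+1}>S_n+1$, and otherwise the bound $\diff(\sigma^{S_n}\nu,\nu)\ge S_{n+1}-S_n$ holds trivially).
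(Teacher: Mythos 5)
Your proposal is correct and takes essentially the same approach as the paper, whose entire justification is the one-line observation that $\nu$ and $\sigma^{S_k}\nu$ coincide for the first $S_{k+1}-S_k-1$ entries. You simply spell out how this follows from the $S_n$-periodicity of $\varsigma_n$ together with $\diff(\nu,\varsigma_n)=S_{n+1}$, and why $\limsup_m\diff(\sigma^m\nu,\nu)=\infty$ is equivalent to recurrence under the topology of $X_\nu$.
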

This is because the sequences $\nu$ and $\sigma^{S_k}(\nu)$ coincide for $S_{k+1}-S_k-1$ entries.

\begin{example}[Recurrent kneading sequence with finite tree]
A famous example is the Morse--Thue-parameter with internal address $1\to 2\to 4\to 8\to\16\dots$. The associated parameter in the Mandelbrot set is the Feigenbaum point, which is a real parameter, so the Hubbard tree is an interval.
\end{example}

\begin{example}[Infinite tree via internal address]
\label{Ex:InfiniteTreeIntAddr}
If an internal address $1\to S_1\to S_2\dots$ has the property that $S_{k+1}/S_k>2$ for infinitely many $k$, then the associated kneading sequence has the property that its Hubbard tree has infinitely many periodic orbits that are branch points, and thus infinitely many endpoints. 

All such kneading sequences are recurrent by Lemma~\ref{Lem:recurrent}. 
\end{example}

While this result does not require that the internal address or its kneading sequence are complex admissible, the reasoning is documented better in this case, and we can find such kneading sequences in the Mandelbrot set. If $a_k:=\lceil S_{k+1}/S_k\rceil$, then the internal address $1\to \dots\to S_k\to S_{k+1}$, and all extended internal addresses are in a $p_k/q_k$-limb of a period $S_k$ component, where $q_k=a_k$ or $q_k=a_k+1$ \cite[Lemma~4.1]{IntAddr}. This implies that upon entering that limb, a periodic cycle with $q_k$ branches is created. This is a branch point if $q_k\ge 3$, and this is guaranteed if $S_{k+1}/S_k>2$. If this happens for infinitely many $k$, then the Hubbard tree has infinitely many periodic branch points on infinitely many branch cycles. And this, in turn, implies that the Hubbard tree must have infinitely many endpoints.

It is also possible to have infinite trees for non-recurrent kneading sequences. 

\begin{example}[Infinite tree, non-recurrent]
\label{Example:InfiniteTreeNonRecurrent}
There are infinite trees for which the kneading sequence is non-recurrent.
\end{example}

Consider the sequence $\nu=\1\,\1\0\,\1\0\0\,\1\0\0\0\,\1\0\0\0\0\,\1...$ where the number of consecutive $\0$'s between $\1$'s keeps increasing by one. This ensures that $\nu$ is postcritically infinite. Since the critical value is the only postcritical point that starts with $\1\1$, the kneading sequence is non-recurrent. In fact, by comparing the itineraries of $\nu$ and $\star\nu$, we see that no other postcritical point can lie on the critical path. By Lemma~\ref{Lem:return}, this implies that all postcritical points are endpoints, so the tree is indeed infinite. See Figure~\ref{Fig:InfiniteNonRecurrent} for a sketch of $\Hub(\nu)$. 

\begin{figure}[htbp]
{
\includegraphics[width=\textwidth]{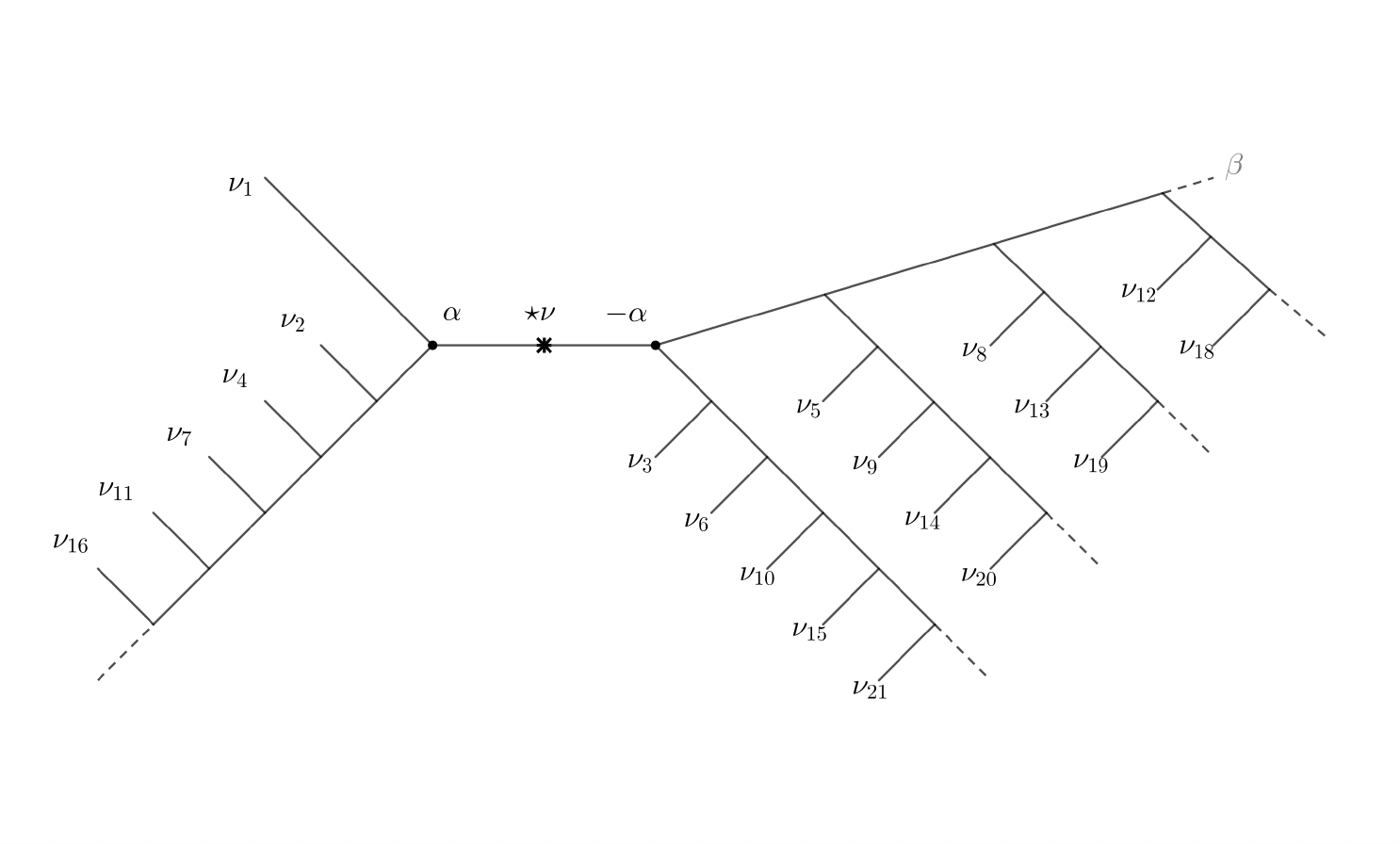}
}\caption{Sketch of an infinite tree with non-recurrent kneading sequence $\nu=\1\,\1\0\,\1\0\0\,\1\0\0\0\,\1\0\0\0\0\,\1...$ . The fixed point $\beta=\ovl\0$ is a limit point of the critical orbit, but does not belong to the tree.}
\label{Fig:InfiniteNonRecurrent}
\end{figure}

Observe that in Example~\ref{Example:InfiniteTreeNonRecurrent} there are infinitely many branch points, but all are on the backwards orbit of the fixed point $\alpha$ --- unlike Example~\ref{Ex:InfiniteTreeIntAddr} where we have infinitely many periodic branch orbits. The following result explains why.

\begin{lemma}[Infinite tree for non-recurrent kneading sequence]
If $\nu$ is non-recurrent then there are only finitely many periodic branch points. 
\end{lemma}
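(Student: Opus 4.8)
The plan is to exploit the dichotomy, established in Corollary~\ref{Cor:orbits} together with Lemma~\ref{Lem:ExistenceCharacteristic}, that a periodic orbit is a branch orbit only if it is of satellite or evil type, and that in either case the orbit has a characteristic point $\tau$ of some period $p$ and valency $k\ge 3$. By Corollary~\ref{Cor:per_diff} we then have $\diff(\nu,\tau)\ge (k-2)p+1\ge p+1$, so the sequences $\nu$ and $\sigma^p(\nu)$ agree for at least the first $p+1$ entries. I want to turn this into a statement about the internal address of $\nu$, from which non-recurrence will force only finitely many such $p$ to occur.

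First I would recall that $\tau$ lies on the critical path and is characteristic, so the global arm at $\tau$ containing the critical value maps forward homeomorphically for at least $(k-2)p\ge p$ steps; in particular $\sigma^p(\tau)$ is on the critical path as well (it is again characteristic, being on the same orbit), and the path $[\tau,\nu]$ maps onto $[\sigma^p\tau,\nu]$ without hitting the critical point. This gives $\diff(\nu,\tau)=\diff(\nu,\sigma^p\tau)+p$ at the level of the tree, and hence $\nu$ and $\sigma^p\nu$ have a common initial segment of length $\ge p$. The key combinatorial fact I would invoke is that whenever $\nu$ and $\sigma^p(\nu)$ agree for more than $p$ entries, $p$ must be an entry of the internal address $1\to S_1\to S_2\to\cdots$ of $\nu$ (this is exactly the closing-up/reconstruction property of internal addresses used throughout \cite{IntAddr}, and is how $\varsigma_n$ is defined in the excerpt): indeed the $p$-periodic sequence obtained by repeating the first $p$ entries of $\nu$ agrees with $\nu$ for $\ge p+1$ places, so $p$ appears as some $S_n$ by the recursive construction of $I(\nu)$. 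Thus every periodic branch orbit contributes an entry to the internal address equal to its period.

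Now I bring in non-recurrence. If $\nu$ is non-recurrent then $\nu$ is \emph{not} a limit point of its own forward orbit, i.e.\ there is a fixed integer $m$ such that no $\sigma^j(\nu)$ with $j\ge 1$ agrees with $\nu$ in the first $m$ entries. But a periodic branch orbit of period $p$ and valency $k\ge 3$ produced, via Corollary~\ref{Cor:per_diff}, a point $\sigma^p(\tau)$ on the critical path with $\diff(\nu,\sigma^p\nu)\ge (k-2)p+1\ge p+1$; more to the point, one of the forward images of $\nu$ that lies on the arm structure at $\tau$ agrees with $\nu$ for at least $p+1$ symbols. Hence $p+1\le m$, so $p\le m-1$: the period of every periodic branch orbit is bounded by the non-recurrence constant $m$. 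Since for fixed $p$ there are only finitely many $p$-periodic points in $\Hub(\nu)$ (the periodic points of period dividing $p$ form a finite set, being cut out by finitely many precritical points and, in the $\star$-periodic case, the tree is finite altogether; in the non-periodic infinite case each $H_n$ is a finite tree and a $p$-periodic branch point must already appear in $H_{Cp}$ for a suitable constant), there are only finitely many periodic branch points in total.

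The step I expect to be the main obstacle is making precise the claim ``$\diff(\nu,\sigma^p\nu)\ge p+1$ forces $p$ into the internal address, and forces a forward image of $\nu$ to be within $m$ of $\nu$.'' One has to be a little careful that $\diff$ on the tree, which counts the depth of the first precritical point on $[\nu,\sigma^p\nu]$, really translates into an agreement of symbolic itineraries of the appropriate length, and that the relevant forward iterate is indeed $\sigma^p(\nu)$ rather than some other orbit point --- this is where characteristicity of $\tau$ and the homeomorphic-transport statement of Lemma~\ref{Lem:ExistenceCharacteristic}\eqref{it:glo_char} and Corollary~\ref{Cor:per_diff} do the work. A secondary, more routine point is the finiteness of the set of $p$-periodic branch points for each fixed $p\le m-1$, which follows from density of precritical points (Theorem~\ref{Thm:Hubbard_tree}\eqref{Item:PrecDenseHub}) together with the fact that precritical points of depth $\le N$ are finite in number, so that a periodic branch point of period $p$ is detected already in some finite tree $H_n$.
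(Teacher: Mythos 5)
Your proposal is in the right spirit and uses the right tools (characteristic points, Corollary~\ref{Cor:per_diff}, unboundedness of diff forcing recurrence), but the central estimate is wrong and the $d=\infty$ case is not handled.

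The key step you assert is that $\diff(\nu,\tau)\ge(k-2)p+1$ forces $\nu$ and $\sigma^p(\nu)$ to agree for at least $p+1$ entries. This does not follow. Writing $D=\diff(\nu,\tau)$ and using $p$-periodicity of $\tau$, one gets $\nu_j=\tau_j=\tau_{j+p}=\nu_{j+p}$ only for $j<D-p$, i.e.\ $\diff(\nu,\sigma^p\nu)\ge D-p\ge(k-3)p+1$. Equivalently, at the level of the tree, $\tau\in[\nu,\sigma^p\nu]$ and so $\diff(\nu,\sigma^p\nu)=\min\bigl(\diff(\nu,\tau),\diff(\tau,\sigma^p\nu)\bigr)$; the second term, being $\diff(\nu,\tau)-p$, is the bottleneck and is only $\ge(k-3)p+1$. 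For $k\ge 4$ this does give $\ge p+1$, but for the generic branch point with $k=3$ branches the bound collapses to $\ge 1$, which carries no information at all. There is therefore no lower bound on $\diff(\nu,\sigma^p\nu)$ from this argument, and the deduction that $p$ must appear in the internal address and that $p\le m-1$ breaks down. This is exactly why the paper does not try to pin down a specific postcritical point such as $\sigma^p\nu$; it chooses an unspecified postcritical point $\nu(\tau_i)$ on a branch at $\tau_i$, and instead of estimating $\diff(\tau_i,\nu(\tau_i))$ directly, argues by a counting pigeonhole that the lowest-depth precritical points $\rho_i$ on $[\tau_i,\nu(\tau_i)]$ must have unbounded depth as $i\to\infty$. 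The unboundedness of $\diff(\nu,\tau_i)$ coming from Corollary~\ref{Cor:per_diff} and the unboundedness of the depths of $\rho_i$ together give $\limsup\diff(\nu,\nu(\tau_i))=\infty$, i.e.\ recurrence.

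A second gap is the case $d=\infty$, which you treat as ``routine.'' Your finiteness step relies on ``there are only finitely many precritical points of depth $\le N$,'' which is false when $d=\infty$: for each depth $n$ there are infinitely many precritical words $w\star\nu$ with $|w|=n-1$, and it is not a priori clear how many of them lie in $\Hub(\nu)$. The paper devotes a separate paragraph to this case, arguing first that the periods of the $\tau_i$ must still diverge (using precritical points on the critical path, of which there are only finitely many of each depth since $C_\nu$ is built from $\nu$ alone) and then that the depths of the $\rho_i$ must diverge, by examining precritical points of lower depth that must appear between consecutive $\rho_{n_i}$ and noting they cannot accumulate on the critical path. Without some such argument, the finiteness claim for fixed $p$ is unjustified in the exponential case.
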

\begin{proof}
Suppose there exist infinitely many periodic branch points. We want to show that $\nu$ is recurrent. Let $\tau_1,\tau_2,...$ be the characteristic branch points on the respective orbits, choose a postcritical point $\nu(\tau_i)$ on a branch of $\tau_i$ disjoint from the critical path and let $\rho_i$ be the precritical point on $[\tau_i, \nu(\tau_i)]$ of lowest depth. 

In the case $d<\infty$, the periods of $\tau_i$ must be unbounded because there are only finitely many points of any given period. Corollary~\ref{Cor:per_diff} implies 
\[
\limsup_{i\to \infty} \diff(\nu, \tau_i)=\infty
\;.
\]
Since also the depth of the $\rho_i$ must be unbounded in this case, we obtain \[
\limsup_{i\to \infty} \diff(\nu, \nu(\tau_i))=\infty
\;,
\] 
i.e.,  $\nu$ is recurrent. 

The case of infinite degree needs a bit more attention. To begin with, suppose that the periods of $\tau_i$ are bounded. Then there exists a subsequence of the $\tau_i$ where all elements have period $p$ for some $p$. But then between two adjacent elements of this subsequence there has to be a precritical point of depth at most $p$. They all lie on the critical path, in contradiction to the fact that the number of those precritical points is finite. Therefore, the periods of the $\tau_i$ diverge also in this case.

The second possible problem is that the depths of the $\rho_i$ might be bounded. Then there exists $k\ge 2$ for which there is a subsequence $\rho_{n_i}$ whose elements all have depth $k$ and we can choose $k$ minimal with that property. For each $i$, there must exists a precritical point of depth lower than $k$ on the path $[\rho_{n_i},\rho_{n_{i+1}}]$. Only finitely many of them can lie on the critical path, so infinitely many of them lie on a branch $[\tau_{n_i}, \rho_{n_i}]$, contradicting minimality of $k$. With these results, the proof for the case $d<\infty$ applies also in this case.
\end{proof}

\begin{example}[Branch point with infinite valency]
\label{Ex:InfiniteValency}
For $d=\infty$ and every kneading sequence in which infinitely many symbols occur, the critical point is always a branch point with infinite valency.\end{example}

Our final example shows that branch points can be dense on the Hubbard tree; 
for $d=\infty$, these branch points can all have infinite valency.

\begin{example}[Branch points are dense]
For all $d\in\{2,3,\dots,\infty\}$ and every kneading sequence $\nu$ such that every finite word over $A_d$ occurs somewhere within the sequence, all precritical points of $X_\nu$ lie in $\Hub(\nu)$ and have $d$ branches (except the critical value).
\end{example}

For each precritical point $w\star\nu$,  consider two postcritical points that start with $w\mathtt e$ and $w\mathtt{e'}$ for distinct $\mathtt{e},\mathtt{e'}\in A_d$. Then the precritical point must lie on the unique path between them. 


\end{document}